\theoremstyle{plain}
 \newtheorem{theorem}{Theorem}[section]
 \newtheorem*{nonum-theorem}{}
 \newtheorem{lemma}[theorem]{Lemma}
 \newtheorem{corollary}[theorem]{Corollary}
 \newtheorem{proposition}[theorem]{Proposition}
\newtheoremstyle{mystyle}%   
    {}%b                     
    {}%                      
    {\itshape}%              
    {}%                      
    {\bfseries}%             
    {.}%                     
     { }%                     
    {\thmnote{#3}}
 \theoremstyle{mystyle}
\newtheorem{mythm}{}
\theoremstyle{definition}
 \newtheorem{definition}[theorem]{Definition}
 \newtheorem{example}[theorem]{Example}
 \newtheorem*{ack}{Acknowledgment}
\theoremstyle{remark}
 \newtheorem{remark}[theorem]{Remark}
\DeclareMathOperator{\mk}{M\MRkern K}
\newcommand{\MRkern}{%
  \mkern-5.8mu
  \mathchoice{}{}{\mkern0.2mu}{\mkern0.5mu}%
}
\newcommand{\scrmk}[1][\sigma]{\mathcal{M\MRkern K}^{#1}}
\def\R{\mathbb{R}}
\def\N{\mathbb{N}}
\DeclareMathOperator*{\esssup}{ess\ sup}
\DeclareMathOperator{\Id}{Id}
\DeclareMathOperator{\Isom}{Isom}
\DeclareMathOperator{\avg}{avg}
\DeclareMathOperator{\inj}{inj}
\DeclareMathOperator{\Vol}{Vol}
\DeclareMathOperator{\dist}{\mathrm{d}}
\numberwithin{equation}{section}
\newcommand{\barypower}{\kappa}
   \def\MR#1{}
\begin{document}
\title[]{Barycenters in Disintegrated optimal transport}
\author[]{Jun Kitagawa}
\address{Department of Mathematics, Michigan State University, East Lansing, MI 48824, USA}
\email{kitagawa@math.msu.edu}
\author[]{Asuka Takatsu}
\address{Graduate School of Mathematical Sciences, The University of Tokyo, Tokyo {153-8914}, Japan \&
 RIKEN Center for Advanced Intelligence Project (AIP), Tokyo {103-0027}, Japan.}
\email{asuka-takatsu@g.ecc.u-tokyo.ac.jp}
% \date{\today}
%
\keywords{optimal transport, duality, barycenters, fiber bundles, disintegration of measures}
\subjclass[2020]{
49Q22, %Optimal Transport
49Q20, 	%Variational problems in a geometric measure-theoretic setting
28A50} %Disintegration}
%%%%%%%%%%%%%%%%%%%%%%%%%%%%%%%%%%%%%%%%
%%
\begin{abstract}
 We prove existence and duality on a wide class of metric spaces, and uniqueness results on any connected, complete Riemannian manifold, with or without boundary, for classical Monge--Kantorovich barycenters. In particular, this is the first and only uniqueness result with no restriction on the geometry of the manifold aside from connectedness and completeness. We obtain these via the corresponding results for barycenter problems associated to a new two-parameter family of metrics on probability measures on a general metric fiber bundle, 
called the \emph{disintegrated Monge--Kantorovich metrics} (previously introduced by the authors).
\\[-20pt]
\end{abstract}
%%%%%%%%%%%%%%%%%%%%%%%%%%%%%%%%%
%%%%%%%%%%%
\maketitle
%%%%%%%%%%%%%%%%%%%%%%%%%%%%%%%%%%%%%%%%%%%%%%%%%%%%%%%%
\section{Introduction}
%%%%%%%%%%%%%%
First, given a complete, separable metric space $(X,\dist_X)$, we write $\mathcal{P}(X)$ for the space of Borel probability measures on~$X$, and for $1\leq p<\infty$, we let $\mathcal{P}_p(X)$ denote the subset of measures in~$\mathcal{P}(X)$ with finite $p$th moment. Then, $\mk_p^{X}$ will denote the \emph{$p$-Monge--Kantorovich metric} (sometimes 
also called the \emph{$p$-Wasserstein metric}) on $\mathcal{P}_p(X)$, defined via optimal transport.
That is, for $\mu$, $\nu\in \mathcal{P}_p(X)$,~define
\begin{align*}
\begin{split}
 \Pi(\mu,\nu)\coloneqq &\{\gamma\in \mathcal{P}(X\times X)\mid 
 \gamma(A\times X)=\mu(A), \gamma(X\times A)=\nu(A) \ \text{for any Borel }A\subset X
 \},\\
 \mk_p^X(\mu, \nu)\coloneqq &
 \inf_{\gamma\in \Pi(\mu, \nu)}\left\|\dist_X\right\|_{L^{p}(\gamma)}
 =\inf_{\gamma\in \Pi(\mu, \nu)}\left(\int_{X\times X} \dist_X(x,y)^pd\gamma(x, y)\right)^{\frac1p}.
\end{split}
\end{align*}
By~\cite{Villani09}*{Theorem~4.1} the infimum above is always attained, such a minimizer is called \emph{a $p$-optimal coupling} of $\mu$ and $\nu$. We will also write for $K\geq 2$,
\begin{align*}
    \Lambda_K\coloneqq \left\{(\lambda_k)_{k=1}^K\in (0, 1)^K\Biggm| \sum_{k=1}^K\lambda_k=1\right\}.
\end{align*}
for the probability simplex. 
Finally, if $X$ is a locally compact Hausdorff space, a real valued function $\phi$ on $X$ \emph{vanishes at infinity} if
\[
\left\{ x\in X \bigm| \left|\phi(x)\right| \geq \varepsilon \right\}
\]
is compact for all $\varepsilon>0$, and we denote by $C_0(X)$ the space of continuous functions on $X$
vanishing at infinity, equipped with the supremum norm.

%%%%%%%%%%
Then our first main result concerns barycenters with respect to these classical $\mk^X_p$ metrics and is as follows.
%%%%%%%%%%%
\begin{theorem}\label{thm: classical barycenters}
Fix $K\in \N$, $K\geq 2$, $(\lambda_k)_{k=1}^K\in \Lambda_K$, $1\leq p<\infty$.
Let $(Y,\dist_Y)$ be a complete, separable metric space and fix some subset $\{\mu_k\}_{k=1}^K\subset\mathcal{P}_p(Y)$.
\begin{enumerate}
\setlength{\leftskip}{-15pt}
    \item \label{thm: disint bary exist2}
    If $(Y, \dist_Y)$ satisfies the Heine--Borel property, for any $\barypower> 0$ there exists a minimizer of the function 
\begin{align}
\label{eqn: classical barycenter}
\nu \mapsto \sum_{k=1}^K \lambda_k \mk_p^Y\left(\mu_k,\nu\right)^\barypower \quad\text{in } \mathcal{P}_p(Y).
\end{align}
    
    \item\label{thm: classical bary duality} If $(Y, \dist_Y)$ is locally compact, 
    \begin{align*}
        \inf_{\nu\in \mathcal{P}_p(Y)}\sum_{k=1}^K \lambda_k \mk_p^Y\left(\mu_k,\nu\right)^p
        =\sup\left\{-\sum_{k=1}^K \int_Y\phi_k^{\lambda_k\dist_Y^p}d\mu_k\Biggm| \frac{\left| \phi_k\right|}{1+\dist_Y(y_0, \cdot)^p}\in C_0(Y),\ \sum_{k=1}^K\phi_k\equiv 0\right\}.
    \end{align*}
    \item\label{thm: classical bary uniqueness} If $p>1$, $\kappa=p$, and $Y$ is a complete, connected Riemannian manifold, possibly with boundary, and $\mu_k$ is absolutely continuous with respect to the Riemannian volume measure on $Y$ for some $1\leq k\leq K$, then there is a unique minimizer in $\mathcal{P}_p(Y)$ of the function~\eqref{eqn: classical barycenter}.
\end{enumerate}
\end{theorem}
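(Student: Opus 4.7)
The plan is to prove uniqueness by a direct linear-interpolation argument, using that the choice $\barypower = p$ makes the functional in \eqref{eqn: classical barycenter} convex along linear convex combinations of measures; this reduces the uniqueness question to the uniqueness of the $p$-optimal coupling out of the absolutely continuous marginal $\mu_{k_0}$.

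First I would assume two minimizers $\nu^0, \nu^1 \in \mathcal{P}_p(Y)$ of \eqref{eqn: classical barycenter}, and for each $k$ fix $p$-optimal couplings $\pi_k^0 \in \Pi(\mu_k, \nu^0)$ and $\pi_k^1 \in \Pi(\mu_k, \nu^1)$ (which exist by \cite{Villani09}). Setting $\nu^{1/2} \coloneqq \tfrac12(\nu^0 + \nu^1)$ and $\pi_k^{1/2} \coloneqq \tfrac12(\pi_k^0 + \pi_k^1) \in \Pi(\mu_k, \nu^{1/2})$, I would observe
\[
\mk_p^Y(\mu_k, \nu^{1/2})^p \;\leq\; \int \dist_Y^p\, d\pi_k^{1/2} \;=\; \tfrac12 \mk_p^Y(\mu_k, \nu^0)^p + \tfrac12 \mk_p^Y(\mu_k, \nu^1)^p.
\]
Summing against $(\lambda_k)_k$ and invoking minimality of $\nu^0$ and $\nu^1$ forces equality throughout, and separately for each $k$; in particular $\pi_k^{1/2}$ is itself $p$-optimal between $\mu_k$ and $\nu^{1/2}$.

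Next I would specialize to $k = k_0$ and invoke the following uniqueness input: every $p$-optimal coupling whose first marginal is $\mu_{k_0}$ is unique and takes the form $(\Id, T)_* \mu_{k_0}$ for some Borel map $T$. This would yield maps $T^t$ with $\pi_{k_0}^t = (\Id, T^t)_* \mu_{k_0}$ for $t \in \{0, 1/2, 1\}$. The identity $\pi_{k_0}^{1/2} = \tfrac12 \pi_{k_0}^0 + \tfrac12 \pi_{k_0}^1$ then disintegrates with respect to its first marginal $\mu_{k_0}$ as $\delta_{T^{1/2}(x)} = \tfrac12 \delta_{T^0(x)} + \tfrac12 \delta_{T^1(x)}$ for $\mu_{k_0}$-a.e.\ $x$, which forces $T^0(x) = T^1(x)$; hence $\nu^0 = T^0_* \mu_{k_0} = T^1_* \mu_{k_0} = \nu^1$.

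The hard part will be the italicized uniqueness input in the stated generality: no curvature hypothesis on $Y$, a possibly nonempty boundary, and general $p > 1$ (where $\dist_Y^p$ may fail to be semiconcave near the diagonal when $p \in (1,2)$, and global injectivity of the exponential map is unavailable). I expect the disintegrated Monge--Kantorovich framework developed earlier in the paper to be the decisive tool here: by choosing an appropriate fiber bundle structure over $Y$, the transport problem should localize fiberwise — where the classical $c$-concave potential argument delivers uniqueness and map structure — and the fiber maps should reassemble into a global Borel map via a measurable selection over the base, sidestepping the need for any global geometric regularity of $Y$.
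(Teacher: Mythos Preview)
Your linear-interpolation reduction is correct and is a genuinely different (and more direct) route than the paper's. The paper derives Theorem~\ref{thm: classical barycenters}\eqref{thm: classical bary uniqueness} by specializing Theorem~\ref{thm: barycenters}\eqref{thm: disint bary unique} to a one-point base; the latter is proved by extracting, via Koml\'os-type theorems and Lemma~\ref{lem: avg c_p-convex}, a limiting potential from a maximizing sequence in the \emph{barycenter} dual problem and showing that this potential determines any minimizer. Your argument instead reduces everything to a single, standard input: uniqueness and Monge structure of the $p$-optimal coupling out of the absolutely continuous marginal $\mu_{k_0}$. This bypasses the entire limiting-sequence machinery, at the cost of needing that single-OT input as a black box.

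The gap is in your plan for that input. Invoking the disintegrated framework here is backwards: that machinery lifts fiberwise results on $Y$ up to a bundle $E$, and when $\Omega$ is a point it collapses to the classical problem on $Y$ --- it cannot manufacture the map uniqueness on $Y$ that you need. The correct argument is the classical potential-theoretic one, and the paper already contains every piece of it. A Kantorovich potential $\phi$ exists for the $\lambda_{k_0}\dist_Y^p$-cost between $\mu_{k_0}$ and any fixed target (standard duality); $\phi$ is a $\lambda_{k_0}\dist_Y^p$-transform, hence locally bounded, hence locally Lipschitz on $Y\setminus\partial Y$ by Corollary~\ref{cor: bounded is lipschitz} (whose proof via Lemmas~\ref{lem: boundary covering} and~\ref{lem: avg c_p-convex} is precisely what handles general $p>1$ and the presence of $\partial Y$ without semiconcavity or curvature assumptions); Rademacher then gives a.e.\ differentiability; and the first-order condition together with the superdifferentiability of $\dist_Y(\cdot,s)^p$ from \cite{McCann01}*{Proposition~6} pins down $s$ uniquely from $\nabla_Y\phi$. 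This is exactly the argument appearing in the final paragraphs of the proof of Theorem~\ref{thm: barycenters}\eqref{thm: disint bary unique}; once you isolate it as a standalone lemma, your interpolation argument concludes with no further appeal to the disintegrated theory.
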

As one immediate application, Theorem~\ref{thm: barycenters} yields results for classical $\mk_p^Y$-barycenters in a wide variety of spaces. 
The above result extend the duality result of Agueh and Carlier in~\cite{AguehCarlier11}*{Proposition~2.2} to any locally compact metric space, and the uniqueness result to \emph{all} complete, connected Riemannian manifolds with or without boundary, with \emph{no restriction on Riemannian geometry} (for example, regarding injectivity radius or curvature bounds). In particular, the uniqueness result is the first and only such result with no restriction on geometry. 

This theorem follows from Theorem~\ref{thm: barycenters} below, which are the results of Theorem~\ref{thm: classical barycenters}  in the setting of the \emph{disintegrated Monge--Kantorovich metrics} $\scrmk_{p, q}$. These are a two parameter family of metrics on spaces of probability measures on \emph{metric fiber bundles} introduced by the authors in \cite{KitagawaTakatsu25a} (see Definition~\ref{def: scrdist}). We note the corresponding uniqueness result Theorem~\ref{thm: barycenters}~\eqref{thm: disint bary unique} (hence also Theorem~\ref{thm: classical barycenters}~\eqref{thm: classical bary uniqueness}) is sharp in the restrictions $p>1$ and $q<\infty$, as demonstrated in Examples~\ref{ex: q=infinity not unique} and~\ref{ex: p=1 nonunique}. Additionally, the techniques used to prove the uniqueness result are of independent interest and likely to be of use in the analysis of other variational problems. In the following section we provide the necessary preliminary definitions, then give the statement of Theorem~\ref{thm: barycenters}.
%%%%%%%%%%%%%%%%%%%
\subsection{Notation and preliminary definitions}
We start by recalling the main setting of the paper, of \emph{metric fiber bundles}. 
Given a metric space $(X,d_X)$, we denote by $\Isom(X)$ the isometry group of $X$. Recall, an action by a subgroup $G$ of $\Isom (X)$ on $X$ is \emph{effective}
if 
$gx=x$ for all $x\in X$ implies $g$ is the identity element in~$G$. 
\begin{definition}\label{def: metric fiber bundle}
    A \emph{metric fiber bundle} is a triple of metric spaces $(E, \dist_E)$, $(\Omega, \dist_\Omega)$, and $(Y, \dist_Y)$, and a continuous, surjective map $\pi: E\to \Omega$ such that there exists an open cover $\{U_j\}_{j\in \mathcal{J}}$ of $\Omega$ and maps $\Xi_j: U_j\times Y\to \pi^{-1}(U_j)$ (\emph{local trivializations}) such that for each $j\in \mathcal{J}$,
    \begin{enumerate}\setlength{\leftskip}{-15pt} 
        \item  $\Xi_j$ is a homeomorphism between $U_j\times Y$ (endowed with the product metric) and $\pi^{-1}(U_j)$ with the restriction of $\dist_E$.
        \item 
         $\pi(\Xi_j(\omega, y))=\omega$ for all $(\omega, y)\in U_j\times Y$.
        \item Write $\Xi_{j, \omega}(y)\coloneqq \Xi_j(\omega, y)$ for $\omega\in U_j$. Then for $j'\in \mathcal{J}$ with $U_j\cap U_{j'}\neq \emptyset$, there is a subgroup $G$ of $\Isom(Y)$ (called a \emph{structure group}) acting on $Y$ effectively, and a
        map $g_{j}^{j'}:U_j\cap U_{j'}\to G$ (which is well-defined since $G$ is effective) such that 
\[
\Xi_{j', \omega}^{-1} (\Xi_{j, \omega}(y))=g_{j}^{j'}(\omega) y
\quad \text{for }(\omega,y)\in (U_j\cap U_{j'}) \times Y.
\]
\item For $\omega\in U_j$, the map $\Xi_{j, \omega}\colon Y \to \pi^{-1}(\{\omega\})$ is an isometry.
    \end{enumerate}
\end{definition}
The reader should keep in mind the case when $E=\Omega\times Y$ is a trivial bundle: $G$ is the trivial group, and there is only one local trivialization map with a cover of $\Omega$ by one set. We will generally denote a metric fiber bundle by $(E, \Omega, \pi, Y)$.

For the remainder of the paper, we fix a metric fiber bundle $(E, \Omega, \pi, Y)$ where $(E, \dist_E)$ and $(\Omega, \dist_\Omega)$  are metric spaces such that $E$ is complete and separable, and~$\Omega$ is complete. Under these conditions, there is a countable, locally finite  subcover~$\{U_j\}_{j\in \N}$ of $\{U_j\}_{j\in \mathcal{J}}$, with local trivializations $\{\Xi_j\}_{j\in \N}$, and we can find a continuous partition of unity $\{\chi_j\}_{j\in \N}$ subordinate to~$\{U_j\}_{j\in \N}$. We also make the technical assumption,
\begin{align}\label{eqn: bounded orbits}
    \text{for each $y\in Y$, the orbit $\{gy\mid g\in G\}$ is a bounded subset of $Y$.}
\end{align}
This holds for trivial bundles, the tangent bundle of any $n$-dimensional Riemannian manifold equipped with the Sasaki metric (see \cite{Sasaki58}*{Section 3}) where $G=O(n)$, and of course any bundle where the diameter of $Y$ is finite or $G$ is compact. 

Then we fix a Borel probability measure $\sigma$ on $\Omega$, and write
\begin{align*}
\mathcal{P}^\sigma(E)
\coloneqq \left\{
\mathfrak{m}\in \mathcal{P}(E)\bigm| \pi_\sharp \mathfrak{m}=\sigma
\right\};
%%% 
\end{align*}
 $\pi_\sharp\mathfrak{m}\in \mathcal{P}(\Omega)$ is the \emph{push-forward measure} defined by $\pi_\sharp\mathfrak{m}(A)=\mathfrak{m}(\pi^{-1}(A))$ for any Borel $A\subset \Omega$.

We will now  need a result on disintegration of measures (see~\cite{DellacherieMeyer78}*{Chapter III-70 and 72}).
\begin{mythm}[Disintegration Theorem]
\label{thm: disintegration}
Suppose $X$, $\Omega$ are complete, separable metric spaces, $\pi: X\to \Omega$ is a Borel map, and $\mathfrak{m}\in \mathcal{P}(X)$. 
 Then there exists a map $\mathfrak{m}^{\bullet}:\Omega \to \mathcal{P}(X)$, uniquely defined $\pi_\sharp\mathfrak{m}$-a.e., such that 
 for any $A\subset X$ Borel, the real valued function
 \[
 \omega \mapsto \mathfrak{m}^\omega(A)
 \]
 is Borel on $\Omega$, and
\begin{align*}
 \mathfrak{m}(A)=\int_\Omega\mathfrak{m}^\omega(A)d\pi_\sharp\mathfrak{m}(\omega).
\end{align*}
Also for $\pi_\sharp\mathfrak{m}$-a.e. $\omega$, 
\begin{align*}
    \mathfrak{m}^\omega(X\setminus \pi^{-1}(\{\omega\}))=0.
\end{align*}
\end{mythm}
This is the \emph{disintegration of $\mathfrak{m}$ with respect to $\pi$} and we will write  $\mathfrak{m}=\mathfrak{m}^{\bullet}\otimes(\pi_\sharp\mathfrak{m})$. Then for $1\leq p<\infty$, define on our metric fiber bundle $(E, \Omega, \pi, Y)$,
\begin{align*}
    \mathcal{P}^\sigma_p(E)\coloneqq \{\mathfrak{m}=\mathfrak{m}^\bullet\otimes \sigma\in \mathcal{P}^\sigma(E)\mid \mathfrak{m}^\omega\in \mathcal{P}_p(\pi^{-1}(\{\omega\}))\text{ for $\sigma$-a.e. $\omega$}\},
\end{align*}
where the disintegration is with respect to the projection map $\pi$.

We also fix some $y_0\in Y$ and for any Borel $A\subset E$, define
\begin{align}
\label{eqn: delta construction}
     (\delta^\bullet_{E, y_0}\otimes \sigma)(A)\coloneqq \sum_{j\in \N}\int_{\Omega}\chi_j(\omega) (\Xi_{j, \omega})_\sharp\delta^Y_{y_0}(A)d\sigma(\omega).
 \end{align}
 If $\delta^\omega_{E, y_0}\in \mathcal{P}(E)$ is defined for $\sigma$-a.e. $\omega\in \Omega$ as 
\begin{align*}
\delta^\omega_{E,y_0}\coloneqq \sum_{j\in \N}\chi_j(\omega) (\Xi_{j, \omega})_\sharp\delta^Y_{y_0},
\end{align*}
by \cite{KitagawaTakatsu25a}*{Lemma 2.5}, the functional 
in~\eqref{eqn: delta construction} 
belongs to  $\mathcal{P}^\sigma_p(E)$ and its disintegration with respect to $\pi$ is given by $\delta^\bullet_{E, y_0}\otimes \sigma$. Also define 
\begin{align*}
\dist_{E,y_0}^p(\omega, u)
\coloneqq
\sum_{j\in \N}\chi_j(\omega) \dist_E(\Xi_{j, \omega}(y_0),u)^p
\quad \text{for }(\omega, u)\in \Omega\times E.
\end{align*}

%%%%%%%%%%%%%%%%%%%%%%%%%%%%%%%%%%
Then we can recall the disintegrated Monge--Kantorovich metrics.
\begin{definition}[\cite{KitagawaTakatsu25a}*{Definition 1.2}]
\label{def: scrdist}
Let $1\leq p <\infty$ and $1\leq q\leq\infty$.
Given $\mathfrak{m}$, $\mathfrak{n}\in\mathcal{P}^\sigma_p(E)$,
 define the \emph{disintegrated $(p, q)$-Monge--Kantorovich metric} by
\begin{align*}
 \scrmk_{p,q}(\mathfrak{m}, \mathfrak{n})\coloneqq \left\| \mk_p^{E}(\mathfrak{m}^\bullet, \mathfrak{n}^\bullet)\right\|_{L^q(\sigma)}.
\end{align*}
We also define
\begin{align*}
\mathcal{P}^\sigma_{p,q}(E)\coloneqq \left\{ \mathfrak{m}\in \mathcal{P}^\sigma_p(E) \Biggm|
\scrmk_{p, q}(\delta^\bullet_{E, y_0}\otimes \sigma,\mathfrak{m})<\infty
\right\}.
\end{align*}
\end{definition}
By \cite{KitagawaTakatsu25a}*{Lemmas 2.5 and 2.6} the set $\mathcal{P}^\sigma_{p,q}(E)$ is independent of the choices of $y_0\in Y$, $\{U_j\}_{j\in \N}$, associated local trivializations $\{\Xi_j\}_{j\in \N}$, and partition of unity $\{\chi_j\}_{j\in \N}$. In \cite{KitagawaTakatsu25a}*{Theorem 1.5} it is proved that $(\mathcal{P}^\sigma_{p,q}(E), \scrmk_{p, q})$ is a complete metric space, separable when $q<\infty$, and under some additional hypotheses, is a geodesic space. It is also shown in some cases there is a dual representation for $\scrmk_{p, q}$. 

%%%%%%%%%%%%%%%%%%%%%%
For a general topological space $X$, we write $C_b(X)$ for the space of bounded continuous functions on $X$ with the supremum norm. We also define the function spaces
\begin{align*}
 \mathcal{X}_p&\coloneqq\left\{\xi\in C(E)\Bigm| \frac{\xi}{1+\dist^p_{E,y_0}(\pi, \cdot)}\in C_0(E)\right\}
\text{ with }\left\| \xi\right\|_{\mathcal{X}_p}\coloneqq\sup_{u\in E}\frac{\left| \xi(u)\right|}{1+\dist^p_{E,y_0}(\pi(u), u)},\\
\mathcal{Z}_{r', \sigma}
&\coloneqq 
\left\{ \zeta\in C_b(\Omega) \bigm| \left\| \zeta\right\|_{L^{r'}(\sigma)}\leq 1,\ \zeta>0
\right\} \text{ for } r'\in [1, \infty];
\end{align*}
 the space $\mathcal{X}_p$ does not depend on the choices of $\{U_j\}_{j\in \N}$, $\{\Xi_j\}_{j\in \N}$, $\{\chi_j\}_{j\in \N}$, and $y_0\in Y$, as shown in \cite{KitagawaTakatsu25a}*{Lemma 2.15}. Finally, for $\lambda \in (0,1]$ and $\xi\in \mathcal{X}_p$, 
we define by $S_{\lambda, p}\xi: E\to (-\infty, \infty]$ by
\begin{align*}
S_{\lambda, p}\xi(u)
\coloneqq \sup_{v\in \pi^{-1}(\{\pi(u)\})} \left(-\lambda \dist_E(u, v)^p-\xi(v) \right)
\quad
\text{for }u\in E.
\end{align*}
Since it is a supremum of continuous functions, $S_{\lambda, p}\xi$ is Borel on $E$ for any $\xi\in \mathcal{X}_p$.
We will omit the subscript $p$ 
and  write $S_\lambda$ for $S_{\lambda, p}$ when there is no possibility of confusion.

With these definitions in hand we can state our main result on barycenters with respect to disintegrated Monge--Kantorovich metrics, which is the following.
\begin{theorem}\label{thm: barycenters}
Fix any $K\in \N$ with $K\geq 2$, $(\lambda_k)_{k=1}^K\in \Lambda_K$, $1\leq p<\infty$,  and $p\leq q\leq \infty$.
   Also let $(E, \Omega, \pi, Y)$ be a metric fiber bundle satisfying the condition~\eqref{eqn: bounded orbits}, with $(E, \dist_E)$ complete and separable, and $(\Omega, \dist_\Omega)$ complete, and let $\sigma\in\mathcal{P}(\Omega)$.
Furthermore, suppose that  $(E, \dist_E)$ is locally compact.
Also fix a subset $\{\mathfrak{m}_k\}_{k=1}^K\subset\mathcal{P}^\sigma_{p,q}(E)$.
        \begin{enumerate}
        \setlength{\leftskip}{-15pt}
        \item\label{thm: disint bary exist} 
If $(Y, \dist_Y)$ has the Heine--Borel property,
then for $\barypower> 0$,
there exists a minimizer of the function 
\begin{align}
\label{eqn: barycenter function}
\mathfrak{n}\mapsto \sum_{k=1}^K \lambda_k \scrmk_{p,q}\left(\mathfrak{m}_k,\mathfrak{n}\right)^\barypower \quad\text{in } \mathcal{P}^\sigma_{p, q}(E).
\end{align}
        \item\label{thm: disint bary duality} 
It holds that 
\begin{align*}
&\inf_{\mathfrak{n}\in \mathcal{P}^\sigma_{p,q}(E)} \sum_{k=1}^K \lambda_k \scrmk_{p,q}\left(\mathfrak{m}_k,\mathfrak{n}\right)^p\\
&=\sup\left\{
 -\sum_{k=1}^K\int_\Omega\zeta_k(\omega)
 \int_E S_{\lambda_k} \xi_k d \mathfrak{m}_k^\omega d\sigma(\omega)\biggm|(\zeta_k, \xi_k)\in \mathcal{Z}_{r', \sigma}\times \mathcal{X}_p \text{ such that }\displaystyle\sum_{k=1}^K\zeta_k\xi_k\equiv 0
\right\},
\end{align*}
where $r'$ is the H\"older conjugate of $r\coloneqq q/p$.
\item\label{thm: disint bary unique} 
Suppose $p>1$, $q<\infty$, $\kappa=p$, and let $Y$ be a complete, connected Riemannian manifold, possibly with boundary. 
Also suppose for some index $1\leq k\leq K$, for $\sigma$-a.e. $\omega$ there is $j\in \N$ with $\omega\in U_j$ such that the measure~$(\Xi_{j, \omega})_\sharp\mathfrak{m}_k^\omega$ is absolutely continuous with respect to the Riemannian volume measure on $Y$. Then minimizers in $\mathcal{P}^\sigma_{p, q}(E)$ of the function~\eqref{eqn: barycenter function} 
are unique, if they exist.
    \end{enumerate}
\end{theorem}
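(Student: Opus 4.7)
The plan is to assume $\mathfrak{n}_0,\mathfrak{n}_1\in\mathcal{P}^\sigma_{p,q}(E)$ both minimize the functional $F$ in~\eqref{eqn: barycenter function} and show $\mathfrak{n}_0=\mathfrak{n}_1$. I would proceed in two stages: a soft convexity step reducing to a fiberwise equality-in-convexity, followed by a fiberwise argument exploiting the absolute continuity hypothesis.

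Set $\mathfrak{n}_{1/2}\coloneqq\tfrac12(\mathfrak{n}_0+\mathfrak{n}_1)$, which lies in $\mathcal{P}^\sigma_{p,q}(E)$ with disintegration $\mathfrak{n}_{1/2}^\omega=\tfrac12(\mathfrak{n}_0^\omega+\mathfrak{n}_1^\omega)$ for $\sigma$-a.e.\ $\omega$. Averaging optimal couplings fiberwise gives the pointwise bound
\begin{align*}
\mk_p^{E}(\mathfrak{m}_k^\omega,\mathfrak{n}_{1/2}^\omega)^p\leq \tfrac12\bigl(\mk_p^{E}(\mathfrak{m}_k^\omega,\mathfrak{n}_0^\omega)^p+\mk_p^{E}(\mathfrak{m}_k^\omega,\mathfrak{n}_1^\omega)^p\bigr).
\end{align*}
Raising to the $q/p$ power (valid since $q\geq p$), integrating against $\sigma$, applying the triangle inequality in $L^{q/p}(\sigma)$, and raising to $p/q$ then gives $F(\mathfrak{n}_{1/2})\leq\tfrac12(F(\mathfrak{n}_0)+F(\mathfrak{n}_1))=\inf F$, so equality holds throughout; in particular the displayed fiberwise inequality is an equality for $\sigma$-a.e.\ $\omega$ and all $k$.

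Now fix such an $\omega$ and let $k$ be the distinguished index provided by the absolute continuity hypothesis. Via the local trivialization isometry $\Xi_{j,\omega}$, the fiberwise $\mk_p^{E}$ distance on $\pi^{-1}(\{\omega\})$ agrees with $\mk_p^Y$, and $\mathfrak{m}_k^\omega$ corresponds to a measure $\mu$ on $Y$ that is absolutely continuous with respect to the Riemannian volume; let $\nu_i$ correspond analogously to $\mathfrak{n}_i^\omega$. The fiberwise equality becomes: whenever $\gamma_i$ is $p$-optimal in $\Pi(\mu,\nu_i)$ for $i=0,1$, the average $\tfrac12(\gamma_0+\gamma_1)$ is $p$-optimal in $\Pi(\mu,\tfrac12(\nu_0+\nu_1))$. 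I expect the main obstacle to be the technical input one needs here, and indeed the crux of the paper's novelty: for $p>1$ on an arbitrary complete connected Riemannian manifold $Y$ (with or without boundary), every $p$-optimal coupling out of an absolutely continuous source is unique and concentrated on the graph of a Borel map. Classical Brenier/McCann arguments rely on smoothness of the cost across $Y\times Y$ and hence on injectivity-radius or curvature bounds (and the absence of boundary), so in their place I would work directly with $c$-cyclical monotonicity, invoking that for each $y\in Y$ the cut locus of $y$ has Riemannian measure zero and hence $\dist_Y(\cdot,y)^p$ is differentiable almost everywhere, and extracting the transport map from an arbitrary $c$-cyclically monotone support set via a measurable selection argument.

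Granting this representation, write $\gamma_i=(\Id,T_i)_\sharp\mu$ and apply the same uniqueness to get the optimal coupling from $\mu$ to $\tfrac12(\nu_0+\nu_1)$ in the form $(\Id,T_{1/2})_\sharp\mu$. Matching conditional measures forces $\tfrac12(\delta_{T_0(x)}+\delta_{T_1(x)})=\delta_{T_{1/2}(x)}$ for $\mu$-a.e.\ $x$, hence $T_0=T_1$ $\mu$-a.e.\ and so $\nu_0=\nu_1$. Pushing back through $\Xi_{j,\omega}$ gives $\mathfrak{n}_0^\omega=\mathfrak{n}_1^\omega$ for $\sigma$-a.e.\ $\omega$, and therefore $\mathfrak{n}_0=\mathfrak{n}_1$ in $\mathcal{P}^\sigma_{p,q}(E)$.
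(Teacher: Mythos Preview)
Your overall strategy is sound and genuinely different from the paper's. The convexity reduction is correct: the chain $\scrmk_{p,q}(\mathfrak{m}_k,\mathfrak{n}_{1/2})^p=\lVert\mk_p^E(\mathfrak{m}_k^\bullet,\mathfrak{n}_{1/2}^\bullet)^p\rVert_{L^{q/p}(\sigma)}\leq\lVert\tfrac12(\mk_p^E(\mathfrak{m}_k^\bullet,\mathfrak{n}_0^\bullet)^p+\mk_p^E(\mathfrak{m}_k^\bullet,\mathfrak{n}_1^\bullet)^p)\rVert_{L^{q/p}(\sigma)}\leq\tfrac12(\scrmk_{p,q}(\mathfrak{m}_k,\mathfrak{n}_0)^p+\scrmk_{p,q}(\mathfrak{m}_k,\mathfrak{n}_1)^p)$ collapses to equality, and since $q/p<\infty$ the first inequality being tight forces the fiberwise equality $\sigma$-a.e. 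The final step---averaging two graph-supported optimal couplings and invoking uniqueness of the optimal coupling from an absolutely continuous source to force $T_0=T_1$ $\mu$-a.e.---is also correct once that uniqueness is available. This is the classical Agueh--Carlier route, and it is considerably shorter than the paper's argument, which instead extracts a single limiting potential from a maximizing sequence in the barycenter dual problem of part~\eqref{thm: disint bary duality} via Koml\'os-type averaging (Lemmas~\ref{lem: subsequence of maximizing sequence} and~\ref{lem: measurability of limits}) and shows that potential pins down $\mathfrak{n}^\omega$.

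The gap is in your sketch of the crux itself. Knowing that $\dist_Y(\cdot,y)^p$ is differentiable $\Vol_Y$-a.e.\ for each fixed $y$ does \emph{not} yield a.e.\ differentiability of the Kantorovich potential $\phi$, which is a supremum over $y$ of such functions; nor does $c$-cyclical monotonicity plus measurable selection give uniqueness of the coupling. What is actually needed is that any $\lambda\dist_Y^p$-transform bounded on a ball is locally Lipschitz on compact sets of $Y\setminus\partial Y$---with no curvature or injectivity-radius hypotheses and allowing boundary. The paper establishes precisely this in Lemma~\ref{lem: avg c_p-convex} and Corollary~\ref{cor: bounded is lipschitz} (via the covering Lemma~\ref{lem: boundary covering}); these are the new technical ingredients. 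If you apply Corollary~\ref{cor: bounded is lipschitz} to the Kantorovich potential of the single transport problem $\mu\to\nu$, then Rademacher plus the superdifferentiability argument at the end of the paper's proof gives the map, and your convexity argument closes. So your route works, but it still rests on the same Riemannian lemma the paper proves; what you bypass is the heavy dual-extraction machinery of Lemmas~\ref{lem: subsequence of maximizing sequence}--\ref{lem: measurability of limits}.
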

\begin{remark}\label{rem: p=q duality}
When $p=q$ (equivalently, $r'=\infty$), the maximum in Theorem~\ref{thm: barycenters}~\eqref{thm: disint bary duality} is attained when $\zeta_k\equiv 1$ for all $k$. This can be seen as the proof utilizes (through Proposition~\ref{prop: conjugate}) the duality result~\cite{KitagawaTakatsu25a}*{Theorem 1.5 (3)}, where the analogous comment holds.
\end{remark}
%%%%%%%%%%%%%%%%
\subsection{Discussion of techniques and  
literature}\label{subsec: techniques}
The corresponding results for classical $\mk_2^{\R^n}$-barycenters is due to Agueh and Carlier~\cite{AguehCarlier11}. The instability of disintegration of measures under weak convergence means we are unable to prove existence of $\scrmk_{p, q}$-barycenters by direct compactness methods, thus we have taken the route of using duality in the disintegrated metric setting to prove existence of barycenters. The uniqueness result relies on extracting an appropriate limit of a maximizing sequence in the dual problem, which is by far the most involved proof of the paper. The proof relies on a novel assortment of techniques, which we hope can be of use in other variational problems. Finally, Theorem~\ref{thm: classical barycenters} comes from a quick application of the corresponding results in Theorem~\ref{thm: barycenters} where $\Omega$ is a one point space. We note that the requirement that $Y$ be a Riemannian manifold in Theorem~\ref{thm: barycenters}~\eqref{thm: disint bary unique} and Theorem~\ref{thm: classical barycenters} is only really necessary to obtain Lemmas~\ref{lem: boundary covering} and~\ref{lem: avg c_p-convex}, the remainder of the proof is possible if $Y$ is a space where there is a distinguished class of measures for which all $p$-optimal couplings with left marginals from this class are supported on the graph of an a.e. single valued mapping that can be uniquely determined from a dual potential.  Some existing results on barycenters in similar settings include the results in~\cites{KimPass17, KimPass18, Jiang17, Ohta12}. We note  existing results in the non-manifold setting involve other geometric restrictions (such as Aleksandrov curvature bounds), whereas our result, although restricted to the smooth setting, do not.

We will first prove Theorem~\ref{thm: barycenters}, whose proof is broken down into subsections; Theorem~\ref{thm: classical barycenters} will then follow as a corollary. 
%%%%%%%%%%%%%%%%%%%%
\section{Disintegrated barycenters}\label{sec: barycenters}
In this section, we prove our various claims regarding $\scrmk_{p, q}$-barycenters. For later use, we recall here (see~\cite{KitagawaTakatsu25a}*{(2.4)}): there exists a constant $\tilde{C}>0$ such that
\begin{align}\label{eqn: disint moment estimate}
    \int_{E} \dist_{E,y_0}^p(\omega,v)d\mathfrak{m}^\omega(v)\leq 2^{p-1}(\widetilde{C}+\mk_p^E(\delta^\omega_{E,y_0}, \mathfrak{m}^\omega)^p)
\end{align}
for any $\mathfrak{m}\in \mathcal{P}^\sigma_{p, q}(E)$.
\subsection{Existence of disintegrated barycenters}
We start by proving
Theorem~\ref{thm: barycenters}~\eqref{thm: disint bary exist},
that is,
the existence of $\scrmk_{p, q}$-barycenters. Compared to the case of~$\mk_{p, q}$-barycenters coming from sliced metrics treated in~\cite{KitagawaTakatsu24a}*{Main Theorem (5)}, we lack the continuity needed to apply the direct method, hence we must appeal to the dual problem for~$\scrmk_{p, q}$ to show existence. We will require the fiber $(Y, \dist_Y)$ to be locally compact to apply the duality result~\cite{KitagawaTakatsu25a}*{Theorem 1.5 (3)}, but will actually need the stronger Heine--Borel property on  $(Y, \dist_Y)$. Note that the Heine--Borel property is strictly stronger than local compactness on a complete, separable metric space: the metric space $(\R, \min\{1, d_{\mathbb{R}}\})$ has the same topology as the usual Euclidean one on~$\R$ and is complete and locally compact, but the ball of radius $2$ is all of $\R$ and hence not compact. 
\begin{proof}[Proof of Theorem~\ref{thm: barycenters}~\eqref{thm: disint bary exist}]
Since each $\mathfrak{m}_k\in \mathcal{P}_{p,q}^\sigma(E)$ and 
\[
\sum_{k=1}^K \lambda_k \scrmk_{p,q}\left(\mathfrak{m}_k,\cdot\right)^\barypower
\geq 0 
\quad\text{ on }\mathcal{P}_{p,q}^\sigma(E),
\]
it has a finite infimum and we may take a minimizing sequence $(\mathfrak{n}_\ell)_{\ell\in \mathbb{N}}$, that is 
 \[
 \lim_{\ell\to\infty}\sum_{k=1}^K \lambda_k \scrmk_{p,q}\left(\mathfrak{m}_k,\mathfrak{n}_\ell\right)^\barypower=
 \inf_{\mathfrak{n}\in \mathcal{P}^\sigma_{p, q}(E)}\sum_{k=1}^K \lambda_k \scrmk_{p,q}\left(\mathfrak{m}_k,\mathfrak{n}\right)^\barypower \ \text{ and }\
\sup_{\ell \in \mathbb{N}} \sum_{k=1}^K \lambda_k \scrmk_{p,q}\left(\mathfrak{m}_k,\mathfrak{n}_\ell\right)^\barypower<\infty.
 \]
Since we have
\begin{align*}
\lambda_{1}\scrmk_{p, q}( \mathfrak{n}_\ell, \mathfrak{m}_1)^\barypower
\leq  \sum_{k=1}^K \lambda_k \scrmk_{p,q}\left(\mathfrak{m}_k,\mathfrak{n}_\ell\right)^\barypower
\end{align*}
and $\lambda_1>0$,
we have
\begin{align}
\begin{split}
    \sup_{\ell\in \N}\scrmk_{p, q}(\delta^\bullet_{E,y_0}\otimes \sigma, \mathfrak{n}_\ell)^\barypower
 &\leq  2^{\kappa}
 \left(
\scrmk_{p, q}(\delta^\bullet_{E,y_0}\otimes \sigma, \mathfrak{m}_1)^\barypower
+
 \sup_{\ell\in \N}\scrmk_{p, q}( \mathfrak{n}_\ell, \mathfrak{m}_1)^\barypower
\right)\\
&\leq 2^\barypower\left(
\lambda_1^{-1}\sup_{\ell\in \mathbb{N}} \sum_{k=1}^K \lambda_k \scrmk_{p,q}\left(\mathfrak{m}_k,\mathfrak{n}_\ell\right)^\barypower
+
\scrmk_{p, q}( \delta^\bullet_{E,y_0}\otimes \sigma, \mathfrak{m}_1)^\barypower
\right).\label{eqn: barycenter uniform bound}
\end{split}
\end{align}

We now show that $(\mathfrak{n}_\ell)_{\ell\in \N}$ is tight. Fix an $\varepsilon>0$, since $\sigma$ is a Borel measure, there exists a compact set $K_\Omega\subset \Omega$ such that $\sigma(\Omega\setminus K_\Omega)<\varepsilon/2$. If $q<\infty$, using  Jensen's inequality in the second line below, by~\eqref{eqn: disint moment estimate} and~\eqref{eqn: barycenter uniform bound} we obtain
\begin{align*} \left(\int_E\dist_{E,y_0}^p(\pi(v),v)d\mathfrak{n}_\ell(v)\right)^{\frac{\barypower}{p}}
 %%%
 &= \left[\int_\Omega\left(\int_E \dist_{E,y_0}^p(\pi(v),v) d\mathfrak{n}^\bullet_\ell(v)\right)d\sigma\right]^{\frac{q}{p}\cdot\frac{\barypower}{q}}\\
 %%%
 &\leq \left\|\left(\int_E \dist_{E,y_0}^p(\pi(v),v) d\mathfrak{n}^\bullet_\ell(v)\right)^{\frac{1}{p}}\right\|_{L^q(\sigma)}^\barypower\\
 %%%%
 &= \left\|
 2^{\frac{p-1}{p}}\left(\widetilde{C}^{\frac{1}{p}}+\mk_p^E(\delta^\bullet_{E,y_0}, \mathfrak{n}_\ell^\bullet)\right)\right\|_{L^q(\sigma)}^\barypower\\
 %%%
& \leq 2^{\frac{p-1}{p}\barypower}
 \left(\widetilde{C}^{\frac{\barypower}{p}}+\scrmk_p(\delta^\bullet_{E,y_0}\otimes\sigma, \mathfrak{n}_\ell)^\barypower\right)
\end{align*}
which has a finite upper bound, uniform in $\ell\in \N$ by~\eqref{eqn: barycenter uniform bound}. 
If $q=\infty$, then we can use the trivial inequality
\begin{align*}
\left(\int_\Omega\int_E\dist_{E,y_0}^p(\pi(v),v)d\mathfrak{n}^\bullet_\ell(v)d\sigma\right)^{\frac{\barypower}{p}}
\leq \left\|\int_E \dist_{E,y_0}^p(\pi(v),v)d\mathfrak{n}^\bullet_\ell(v)\right\|_{L^\infty(\sigma)}^{\frac{\barypower}{p}}
\end{align*}
in place of Jensen's inequality to obtain a uniform upper bound. 
Thus in all cases, by Chebyshev's inequality, for  $R>0$ large enough we have
\begin{align*}
    \mathfrak{n}_\ell\left(\left\{v\in E\bigm| \dist_{E,y_0}^p(\pi(v),v)>R\right\}\right)
    &<\frac{\varepsilon}{2},
\end{align*}
hence we find that defining 
\begin{align*}
    K_E\coloneqq \left\{v\in \pi^{-1}(K_\Omega)\bigm| \dist_{E,y_0}^p(\pi(v),v)\leq R\right\},
\end{align*}
we have
\begin{align*}
    \sup_{\ell\in \N}\mathfrak{n}_\ell(E\setminus K_E)
    &<\frac{\varepsilon}{2}+\sup_{\ell\in \N}\mathfrak{n}_\ell(\pi^{-1}(\Omega\setminus K_\Omega))
    =\frac{\varepsilon}{2}+\sigma(\Omega\setminus K_\Omega)<\varepsilon.
\end{align*}
We now show that $K_E$ is a compact subset of $E$. 
Let $(v_\ell)_{\ell\in \N}$ be any sequence in $K_E$ and write $\omega_\ell\coloneqq \pi(v_\ell)$. By compactness of $K_\Omega$, we may pass to a convergent subsequence  $(\omega_\ell)_{\ell\in \N}$
(not relabeled) with limit  $\omega_\infty\in K_\Omega$. 
By local finiteness of $\{U_j\}_{j\in \N}$ and passing to another subsequence, we may assume all $\omega_\ell$ belong to an open neighborhood of $\omega_\infty$ that only meets a finite number of the sets $\{U_{j_i}\}_{i=1}^I$. Passing to another subsequence (and possibly increasing $I$), we may also assume that all $\omega_\ell$ belong to a common set $U_{j_{i_0}}$ for some $1\leq i_0\leq I$ and $\chi_{j_{i_0}}(\omega_\ell)\geq I^{-1}$. Then we have for any $\ell\in\N$
\begin{align*}
    \frac{1}{I}\dist_Y(y_0, \Xi^{-1}_{j_{i_0}, \omega_\ell}(v_\ell))^p
    &=\frac{1}{I}\dist_E(\Xi_{j_{i_0}}(\omega_\ell, y_0), v_\ell)^p\\
    &\leq\sum_{j\in \N}\chi_j(\omega_\ell)\dist_E(\Xi_j(\omega_\ell, y_0), v_\ell)^p
    =\dist_{E,y_0}^p(\pi(v_\ell),v_\ell)   \leq R,
\end{align*}
thus $(\Xi^{-1}_{j_{i_0}, \omega_\ell}(v_\ell))_{\ell\in \N}$ is a bounded sequence in $Y$. Since $Y$ satisfies the Heine--Borel property, we may pass to one final subsequence to assume $\Xi^{-1}_{j_{i_0}, \omega_\ell}(v_\ell)$ converges to some point in $Y$. Thus by continuity of $\Xi_{j_{i_0}}$ we see $v_\ell$ converges to some point in $E$, which again by continuity lies in $K_E$. Hence we see $K_E$ is compact.

Now by Prokhorov's theorem we may pass to a subsequence and assume $(\mathfrak{n}_\ell)_{\ell\in \mathbb{N}}$ converges weakly to some $\mathfrak{n}$ in~$\mathcal{P}^\sigma(E)$. 
Since $Y$ is locally compact, we may apply~\cite{KitagawaTakatsu25a}*{Theorem 1.5 (3)} to obtain for any $\zeta\in \mathcal{Z}_{r', \sigma}$ and $(\Phi, \Psi)\in C_b(E)^2$ such that $-\Phi(u)-\Psi(v) \leq \dist_E(u, v)^p$ whenever $\pi(u)=\pi(v)$,
\begin{align*}
    -\int_E(\zeta\circ\pi)\Phi d(\delta^\bullet_{E,y_0}\otimes \sigma)-\int_E(\zeta\circ\pi)\Psi d\mathfrak{n}
    &=\lim_{\ell\to\infty}\left(-\int_E(\zeta\circ\pi)\Phi d(\delta^\bullet_{E,y_0}\otimes \sigma)-\int_E(\zeta\circ\pi)\Psi d\mathfrak{n}_\ell\right)\\
    &\leq \liminf_{\ell\to\infty}\scrmk_{p, q}(\delta^\bullet_{E,y_0}\otimes \sigma, \mathfrak{n}_\ell)^p,
\end{align*}
where the last term is uniformly bounded in $\zeta$ by~\eqref{eqn: barycenter uniform bound}.
Thus taking a supremum over $\zeta\in \mathcal{Z}_{r', \sigma}$ and all $(\Phi, \Psi)\in C_b(E)^2$ satisfying $-\Phi(u)-\Psi(v) \leq \dist_E(u, v)^p$ for all $\pi(u)=\pi(v)$, by using~\cite{KitagawaTakatsu25a}*{Theorem 1.5 (3)} again we see $\scrmk_{p, q}( \delta^\bullet_{E,y_0}\otimes \sigma, \mathfrak{n})<\infty$, hence $\mathfrak{n}\in \mathcal{P}^\sigma_{p, q}(E)$.

Finally, we can apply~\cite{KitagawaTakatsu25a}*{Corollary 2.24} to obtain 
\begin{align*}
    \sum_{k=1}^K \lambda_k \scrmk_{p,q}\left(\mathfrak{m}_k,\mathfrak{n}\right)^\barypower\leq\liminf_{\ell\to\infty}\sum_{k=1}^K \lambda_k \scrmk_{p,q}\left(\mathfrak{m}_k,\mathfrak{n}_\ell\right)^\barypower.
\end{align*}
\end{proof}

\subsection{Duality for \texorpdfstring{$\scrmk_{p,q}$-}{disintegrated} barycenters}
We now work toward duality for disintegrated barycenters, in the spirit of \cite{AguehCarlier11}*{Proposition~2.2} in the classical Monge--Kantorovich case on $\R^n$ with $p=2$.
In this section, we always assume $1\leq p<\infty$, $1\leq q\leq \infty$. 
Recalling that $r=p/q$ and $r'$ is its H\"older conjugate.

%%%%%%%%%%%%%%%%%%%%%%
It is well-known (see \cite{Folland99}*{Theorem~7.17}) that if $X$ is a locally compact Hausdorff space, elements of the dual of $C_0(X)$  equipped with the supremum norm can be identified with integration against elements of $\mathcal{M}(X)$, the space of (signed) Radon measures on $X$, moreover the total variation norm is equal to the operator norm. Then we can see 
\begin{align*}
 \mathcal{X}_p^*&=\{\mathfrak{m}\in \mathcal{M}(E)\mid (1+\dist^p_{E, y_0}(\pi, \cdot))\mathfrak{m}\in \mathcal{M}(E)\},
\end{align*}
which is a normed space.
\begin{definition}
Let $\mathfrak{m} \in \mathcal{P}^\sigma(E)$ and $\lambda \in (0,1]$.
For $\eta \in\mathcal{X}_p$ we
define 
\begin{align*}
H_{\lambda, \mathfrak{m}}(\eta)\coloneqq 
\inf\left\{
 \int_\Omega\zeta
 \left(\int_E S_{\lambda} \xi d \mathfrak{m}^\bullet\right) d\sigma
\Bigm| (\zeta, \xi)\in \mathcal{Z}_{r', \sigma}\times \mathcal{X}_p,\ \eta=(\zeta\circ \pi)\xi \right\}.
\end{align*}
\end{definition}
Although $H_{\lambda, \mathfrak{m}}$ also depends on $p$ and $q$, since these are fixed we omit them from the notation.
\begin{lemma}\label{lem: lsc convex}
For $\lambda \in (0,1]$, for any $\mathfrak{m} \in \mathcal{P}^\sigma_{p,1}(E)$ the function $H_{\lambda, \mathfrak{m}}$ is proper and convex on $\mathcal{X}_p$.
\end{lemma}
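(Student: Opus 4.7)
The plan is to exploit a key algebraic identity to recast $H_{\lambda,\mathfrak{m}}(\eta)$ as the partial infimum in $\zeta$ of a jointly convex functional of $(\zeta,\eta)$, then obtain convexity and properness in turn. Specifically, for any $(\zeta,\xi)\in\mathcal{Z}_{r',\sigma}\times\mathcal{X}_p$ with $\eta=(\zeta\circ\pi)\xi$, since $\zeta(\omega)>0$ is constant on the fiber $\pi^{-1}(\{\omega\})$,
\[
\zeta(\omega)S_\lambda\xi(u)=\sup_{v\in\pi^{-1}(\{\omega\})}\bigl(-\lambda\zeta(\omega)\dist_E(u,v)^p-\eta(v)\bigr)=S_{\lambda\zeta(\omega)}\eta(u)
\]
for $u\in\pi^{-1}(\{\omega\})$, so the objective defining $H_{\lambda,\mathfrak{m}}$ equals $\Psi(\zeta,\eta)\coloneqq\int_\Omega\int_E S_{\lambda\zeta(\omega)}\eta\,d\mathfrak{m}^\omega d\sigma(\omega)$ and depends only on $(\zeta,\eta)$. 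Since $(\mu,\eta)\mapsto S_\mu\eta(u)$ is a supremum of functions affine in $(\mu,\eta)\in[0,\infty)\times\mathcal{X}_p$, it is jointly convex, and integration preserves this, so $\Psi$ is jointly convex on its domain.

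For convexity of $H_{\lambda,\mathfrak{m}}$, given $\eta^0,\eta^1\in\mathcal{X}_p$ (assumed without loss of generality to have finite $H$-values), $t\in[0,1]$, and $\varepsilon>0$, I would select $\varepsilon$-minimizers $(\zeta^i,\xi^i)$ for $H_{\lambda,\mathfrak{m}}(\eta^i)$ and set $\zeta^t\coloneqq(1-t)\zeta^0+t\zeta^1\in\mathcal{Z}_{r',\sigma}$, $\xi^t\coloneqq\eta^t/(\zeta^t\circ\pi)$ where $\eta^t\coloneqq(1-t)\eta^0+t\eta^1$. The key observation is that $\xi^t$ rewrites as the pointwise convex combination $a(u)\xi^0(u)+b(u)\xi^1(u)$, with $a(u)\coloneqq(1-t)\zeta^0(\pi u)/\zeta^t(\pi u)\in[0,1]$ and $b(u)\coloneqq 1-a(u)$, both continuous; hence $|\xi^t|/(1+\dist_{E,y_0}^p(\pi,\cdot))$ is dominated by $|\xi^0|/(1+\dist_{E,y_0}^p(\pi,\cdot))+|\xi^1|/(1+\dist_{E,y_0}^p(\pi,\cdot))\in C_0(E)$, so $\xi^t\in\mathcal{X}_p$. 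Joint convexity of $\Psi$ then yields $H_{\lambda,\mathfrak{m}}(\eta^t)\leq\Psi(\zeta^t,\eta^t)\leq(1-t)H_{\lambda,\mathfrak{m}}(\eta^0)+tH_{\lambda,\mathfrak{m}}(\eta^1)+\varepsilon$, and sending $\varepsilon\to 0$ gives convexity.

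For properness, $H_{\lambda,\mathfrak{m}}$ is not identically $+\infty$ because $(\zeta,\xi)=(1,0)$ is admissible for $\eta=0$ (using $\|1\|_{L^{r'}(\sigma)}=1$) and $S_\lambda 0\equiv 0$, giving $H_{\lambda,\mathfrak{m}}(0)\leq 0$. For the lower bound $H_{\lambda,\mathfrak{m}}(\eta)>-\infty$, the pointwise estimate $S_\lambda\xi(u)\geq-\xi(u)$ (from taking $v=u$ in the supremum) integrates to
\[
\int_\Omega\zeta\int_E S_\lambda\xi\,d\mathfrak{m}^\bullet d\sigma\geq-\int_E\eta\,d\mathfrak{m}
\]
for every admissible $(\zeta,\xi)$, with finiteness of the right side following from the $\mathcal{X}_p$-bound $|\eta|\leq\|\eta\|_{\mathcal{X}_p}(1+\dist_{E,y_0}^p(\pi,\cdot))$, the disintegrated moment estimate~\eqref{eqn: disint moment estimate}, and the integrability guaranteed by $\mathfrak{m}\in\mathcal{P}^\sigma_{p,1}(E)$. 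The principal obstacle I anticipate is ensuring $\xi^t\in\mathcal{X}_p$ in the convexity argument: a naive direct convex combination $(1-t)\xi^0+t\xi^1$ would violate the nonlinear constraint $\eta^t=(\zeta^t\circ\pi)\xi^t$, so the essential trick is the $\zeta$-weighted convex-combination representation described above, through which the vanishing-at-infinity property for $\xi^t$ transfers from $\xi^0$ and $\xi^1$.
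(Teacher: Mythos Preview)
Your proposal is correct and follows essentially the same approach as the paper: the construction of $(\zeta^t,\xi^t)$ via the $\zeta$-weighted convex combination, the verification that $\xi^t\in\mathcal{X}_p$, and the properness argument using $S_\lambda\xi(u)\geq-\xi(u)$ together with the moment estimate are all identical to the paper's proof. The only cosmetic difference is that you package the convexity step through the auxiliary functional $\Psi(\zeta,\eta)=\int_\Omega\int_E S_{\lambda\zeta(\omega)}\eta\,d\mathfrak{m}^\omega d\sigma(\omega)$ and invoke its joint convexity abstractly, whereas the paper unwinds the same inequality by a direct computation in display~\eqref{eqn: H convexity}.
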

\begin{proof}
We first prove that
$H_{\lambda, \mathfrak{m}}$ is proper. Since $H_{\lambda, \mathfrak{m}}(0)\leq 0$,
we see $H_{\lambda, \mathfrak{m}}$ is not identically $\infty$. 
Also, for any $\xi \in \mathcal{X}_p$ and $\zeta\in \mathcal{Z}_{r', \sigma}$ we have
$\eta=(\zeta\circ \pi)\xi\in \mathcal{X}_p$ and using~\eqref{eqn: disint moment estimate} in the third line below,
\begin{align*}
  \int_\Omega\zeta\int_ES_{\lambda}\xi(u)d\mathfrak{m}^\bullet(u)d\sigma
 &\geq \int_\Omega\zeta\int_E(-\xi(u))d\mathfrak{m}^\bullet(u)d\sigma
 =-\int_E\eta d\mathfrak{m}\\
 &\geq -\left\|\eta\right\|_{\mathcal{X}_p}\int_E\left(1+\dist^p_{E, y_0}(\pi(u), u)\right) d\mathfrak{m}(u)\\
 &=-2^{p-1}\left\|\eta\right\|_{\mathcal{X}_p}(\widetilde{C}+\scrmk_{p, 1}(\delta^\bullet_{E,y_0}\otimes \sigma, \mathfrak{m}))>-\infty,
\end{align*}
hence $H_{\lambda, \mathfrak{m}}$ is proper.

Next we show $H_{\lambda, \mathfrak{m}}$ is convex. 
Fix $\eta_0, \eta_1 \in\mathcal{X}_p$, 
and for $i=0,1$, let $(\zeta_i, \xi_i)\in \mathcal{Z}_{r', \sigma}\times \mathcal{X}_p$ satisfy $\eta_i=(\zeta_i\circ \pi)\xi_i$.
% %%
For $\tau\in (0,1)$, let 
\[
\zeta\coloneqq (1-\tau) \zeta_0+\tau \zeta_1,\qquad
\xi\coloneqq \frac{1}{(\zeta\circ \pi)} \cdot [(1-\tau)(\zeta_0\circ \pi)\xi_0+ \tau(\zeta_1\circ \pi)\xi_1],
\]
then $(1-\tau)\eta_0 +\tau \eta_1=(\zeta\circ \pi)\xi$. 
Moreover, it is clear that $\zeta\in \mathcal{Z}_{r', \sigma}$ and $\xi\in C(E)$. Since
\begin{align*}
\left| \xi\right|
&=\left| \frac{(1-\tau)(\zeta_0\circ \pi)\xi_0+\tau(\zeta_1\circ \pi)\xi_1}{(1-\tau)(\zeta_0\circ \pi)+\tau(\zeta_1\circ \pi)}\right|
\leq 
\max\{\left| \xi_0\right|, \left| \xi_1\right|\}    
\end{align*}
we have $\xi\in \mathcal{X}_p$ as well. 
This yields 
\begin{align}\label{eqn: H convexity}
\begin{split}
H_{\lambda, \mathfrak{m}}((1-\tau)\eta_0+\tau\eta_1)
&\leq 
 \int_\Omega\zeta
 \left(\int_E S_{\lambda} \xi d \mathfrak{m}^\bullet\right) d\sigma\\
%%%%
&= 
 \int_\Omega
 \int_E
 \sup_{v\in \pi^{-1}(\{\pi(u)\})} \left(-\lambda \dist_E(u, v)^p(\zeta\circ \pi)-\xi(v)(\zeta\circ \pi)\right)d \mathfrak{m}^\bullet(u) d\sigma\\
%%%
&=
 \int_\Omega
 \int_E 
 \sup_{v\in \pi^{-1}(\{\pi(u)\})} \left\{-\lambda \dist_E(u, v)^p\left[(1-\tau)(\zeta_0\circ \pi)+\tau(\zeta_1\circ \pi)\right]\right. \\
 &\quad- \left.\left[ (1-\tau)(\zeta_0\circ \pi)\xi_0(v)+ \tau(\zeta_1\circ \pi)\xi_1(v)
 \right]\right\}d \mathfrak{m}^\bullet(u) d\sigma\\ 
%%%%
&\leq(1-\tau)
 \int_\Omega\zeta_0
 \int_E
 \sup_{v\in \pi^{-1}(\{\pi(u)\})} \left(-\lambda \dist_E(u, v)^p-\xi_0(v) \right)d \mathfrak{m}^\bullet(u) d\sigma\\ 
% %%%
 &\quad +\tau
  \int_\Omega\zeta_1
  \int_E 
  \sup_{v\in \pi^{-1}(\{\pi(u)\})} \left(-\lambda \dist_E(u, v)^p-\xi_1(v) \right)d \mathfrak{m}^\bullet(u) d\sigma\\ 
&=
(1-\tau) \int_\Omega\zeta_0
 \left(\int_E S_{\lambda} \xi_0 d \mathfrak{m}^\bullet\right) d\sigma+\tau
 \int_\Omega\zeta_1
 \left(\int_E S_{\lambda} \xi_1 d \mathfrak{m}^\bullet\right) d\sigma.
\end{split}
\end{align}
Taking an infimum over admissible $\zeta_i$, $\xi_i$ proves the convexity of $H_{\lambda, \mathfrak{m}}$.
\end{proof}
For $\mathfrak{n}\in \mathcal{X}_p^\ast$,
recall the \emph{Legendre--Fenchel transform} of $H_{\lambda, \mathfrak{m}}$ is defined by
\[
H_{\lambda, \mathfrak{m}}^\ast(\mathfrak{n})
\coloneqq 
\sup_{\eta \in \mathcal{X}_p}
\left(\int_E\eta d\mathfrak{n}-H_{\lambda, \mathfrak{m}}(\eta)\right).
\]
%%%%%%%%%%%%%%%%%%%%%%%%%%%%%%%
\begin{proposition}\label{prop: conjugate}
Let $\mathfrak{m} \in \mathcal{P}^\sigma_{p,q}(E)$ and $\lambda \in (0,1]$.
If $(E, \dist_E)$ is locally compact, for $\mathfrak{n}\in\mathcal{X}_p^\ast$,
%we have
\[
H_{\lambda, \mathfrak{m}}^{\ast}(-\mathfrak{n})
\coloneqq \begin{cases}
\lambda \scrmk_{p,q}(\mathfrak{m}, \mathfrak{n})^p, &\text{if }\mathfrak{n}\in \mathcal{P}^\sigma_{p, q}(E),\\
\infty, &\text{else}.
\end{cases}
\]
\end{proposition}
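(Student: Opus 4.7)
The plan is to expand the Legendre--Fenchel transform $H^\ast_{\lambda,\mathfrak{m}}$ and then identify the resulting expression with $\lambda\scrmk_{p,q}(\mathfrak{m},\mathfrak{n})^p$ via the disintegrated Kantorovich duality of~\cite{KitagawaTakatsu25a}*{Theorem~1.5~(3)}.  Since every $\eta\in\mathcal{X}_p$ admits the trivial representation $\eta=(\zeta\circ\pi)\xi$ with $\zeta\equiv 1\in\mathcal{Z}_{r',\sigma}$ and $\xi=\eta$, exchanging the outer $\sup_{\eta}$ with the negated infimum built into $H_{\lambda,\mathfrak{m}}$ yields
\begin{align*}
H_{\lambda,\mathfrak{m}}^\ast(-\mathfrak{n})
=\sup_{(\zeta,\xi)\in\mathcal{Z}_{r',\sigma}\times\mathcal{X}_p}
\left[-\int_E(\zeta\circ\pi)\xi\,d\mathfrak{n} -\int_\Omega\zeta\int_E S_\lambda\xi\,d\mathfrak{m}^\bullet\,d\sigma\right].
\end{align*}

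When $\mathfrak{n}\in\mathcal{P}^\sigma_{p,q}(E)$ the right-hand side above is precisely the duality formula for $\lambda\scrmk_{p,q}(\mathfrak{m},\mathfrak{n})^p$ obtained by applying \cite{KitagawaTakatsu25a}*{Theorem~1.5~(3)} with the cost $c_\lambda(u,v)=\lambda\dist_E(u,v)^p$ on each fiber $\pi^{-1}(\{\omega\})$.  Indeed, the $c_\lambda$-transform of $\xi$ along the fiber is exactly $S_\lambda\xi$, so the admissibility constraint $-S_\lambda\xi(u)-\xi(v)\leq\lambda\dist_E(u,v)^p$ is automatic, and both sides match after the $L^r$--$L^{r'}$ pairing produced by integrating against $\zeta$.

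For $\mathfrak{n}\notin\mathcal{P}^\sigma_{p,q}(E)$ the target value is $+\infty$, so the goal is to construct test pairs making the supremum diverge.  I would split by how membership fails.  If $\pi_\sharp\mathfrak{n}\neq\sigma$ as signed Radon measures on $\Omega$, ``base-lifted'' test functions $\xi=h\circ\pi$ with $h\in C_0(\Omega)$ lie in $\mathcal{X}_p$ and satisfy $S_\lambda\xi=-h\circ\pi$ (the supremum defining $S_\lambda$ is attained at $v=u$ since $h\circ\pi$ is constant on each fiber), reducing the objective to $\int_\Omega\zeta h\,d(\sigma-\pi_\sharp\mathfrak{n})$; by Riesz representation this can be made nonzero for some admissible pair, and rescaling $h$ sends it to $+\infty$.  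If $\mathfrak{n}$ has a negative part while $\pi_\sharp\mathfrak{n}=\sigma$, a nonnegative compactly supported $\xi_0\in C_c(E)\subset\mathcal{X}_p$ localized on the negative mass gives $-\int(\zeta\circ\pi)c\xi_0\,d\mathfrak{n}\to+\infty$ linearly in $c$, while $S_\lambda(c\xi_0)$ stays uniformly bounded in $c$ (for large $c$ the fiberwise supremum is attained off $\spt\xi_0$).  Finally, when $\pi_\sharp\mathfrak{n}=\sigma$, $\mathfrak{n}\geq 0$, but $\scrmk_{p,q}(\delta^\bullet_{E,y_0}\otimes\sigma,\mathfrak{n})=\infty$, I would test with $\xi_M\in\mathcal{X}_p$ obtained by truncating $-M\dist_{E,y_0}^p(\pi,\cdot)$ on an increasing exhaustion of $E$, using the moment bound~\eqref{eqn: disint moment estimate} to keep $\int S_\lambda\xi_M\,d\mathfrak{m}^\bullet$ under control and choosing $\zeta\in\mathcal{Z}_{r',\sigma}$ concentrated on the set where $\omega\mapsto\mk_p^E(\delta^\omega_{E,y_0},\mathfrak{n}^\omega)$ blows up in $L^q(\sigma)$.

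The main obstacle will be this last subcase: the growth restriction built into $\mathcal{X}_p$ (namely that $\xi/(1+\dist_{E,y_0}^p(\pi,\cdot))$ vanishes at infinity on $E$) rules out globally unbounded test functions, so one must carefully balance the truncation of $\xi_M$ against the $L^r$--$L^{r'}$ pairing with $\zeta$ so as to quantitatively decouple the finite $\mathfrak{m}$-moments from the divergent $L^q(\sigma)$-norm of the fiber moments of $\mathfrak{n}$.
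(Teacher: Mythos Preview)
Your initial expansion of $H^\ast_{\lambda,\mathfrak{m}}(-\mathfrak{n})$ and the identification with $\lambda\scrmk_{p,q}(\mathfrak{m},\mathfrak{n})^p$ via~\cite{KitagawaTakatsu25a}*{Theorem~1.5~(3)} are correct and match the paper exactly.  The case split for $\pi_\sharp\mathfrak{n}\neq\sigma$ and for $\mathfrak{n}$ not nonnegative is also the right idea, though in the latter case the paper does something simpler than your boundedness argument: for nonnegative $\eta$ one has $-\lambda\dist_E(u,v)^p-C\eta(v)\leq 0$ for every $v$, hence $S_\lambda(C\eta)\leq 0$ outright, so the $\mathfrak{m}$-integral is already $\geq 0$ and can simply be dropped.

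However, your ``main obstacle'' subcase---$\mathfrak{n}\in\mathcal{P}^\sigma(E)$ with $\pi_\sharp\mathfrak{n}=\sigma$, $\mathfrak{n}\geq 0$, but $\mathfrak{n}\notin\mathcal{P}^\sigma_{p,q}(E)$---is not an obstacle at all, and the truncation-plus-concentration scheme you sketch is unnecessary.  The duality formula from~\cite{KitagawaTakatsu25a}*{Theorem~1.5~(3)} is stated for arbitrary $\mathfrak{n}\in\mathcal{P}^\sigma(E)$, not only for $\mathfrak{n}\in\mathcal{P}^\sigma_{p,q}(E)$; applying it directly gives $H^\ast_{\lambda,\mathfrak{m}}(-\mathfrak{n})=\lambda\scrmk_{p,q}(\mathfrak{m},\mathfrak{n})^p$ in this case as well, and since $\mathfrak{m}\in\mathcal{P}^\sigma_{p,q}(E)$ the right-hand side is automatically $+\infty$.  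So the only cases requiring explicit test-function constructions are the two where $\mathfrak{n}\notin\mathcal{P}^\sigma(E)$, and the proof is complete once those are handled.
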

\begin{proof}
First suppose $\mathfrak{n}\in\mathcal{P}^\sigma(E)$,
%%%%%%%%%%%%%%%%%%
then by~\cite{KitagawaTakatsu25a}*{Theorem 1.5 (3)},
\begin{align*}
H_{\lambda, \mathfrak{m}}^{\ast}(-\mathfrak{n})
&=\sup_{\eta\in \mathcal{X}_p}\left(-\int_E\eta d\mathfrak{n}-H_{\lambda, \mathfrak{m}}(\eta)\right) \\
&=
 \sup_{\eta\in \mathcal{X}_p}\sup_{
 \substack{
 (\zeta, \xi)\in \mathcal{Z}_{r', \sigma}\times \mathcal{X}_p,\\ \eta=(\zeta\circ \pi)\xi}}\int_\Omega
\left(
 -\int_E
\eta(v)
d\mathfrak{n}^\bullet(v)-
\zeta\int_E
S_{\lambda}\xi(u) d\mathfrak{m}^\bullet(u) \right)d\sigma \\
%%%%%
&=\sup_{
 \substack{
 (\zeta, \xi)\in \mathcal{Z}_{r', \sigma}\times \mathcal{X}_p,\\ \eta=(\zeta\circ \pi)\xi}}\left[-\int_\Omega
\zeta\cdot\left(
 \int_E
\xi(v)
d\mathfrak{n}^\bullet(v)+
\int_E
S_{\lambda}\xi(u) d\mathfrak{m}^\bullet(u) \right) d\sigma\right]
%%%%%%%%
= 
 \lambda\scrmk_{p, q}(\mathfrak{m}, \mathfrak{n})^p,
\end{align*}
note that since $\mathfrak{m}\in\mathcal{P}^\sigma_{p, q}(E)$, we have $\scrmk_{p, q}(\mathfrak{m}, \mathfrak{n})=\infty$ if $\mathfrak{n}\not\in\mathcal{P}^\sigma_{p, q}(E)$.
%%%%%%%%%%%%%%%%

We now handle the case of $\mathfrak{n}\notin \mathcal{P}^\sigma(E)$. First suppose 
$\mathfrak{n}\in \mathcal{X}^\ast_p$ and $\pi_\sharp\mathfrak{n}\neq\sigma$. In this case, there exists some $\phi\in C_b(\Omega)$ such that 
\[
\int_\Omega \phi d\sigma\neq \int_E\phi(\pi(v)) d\mathfrak{n}(v).
\]
For $C\in \R$, define $\eta_{C\phi}\in\mathcal{X}_p$ by $\eta_{C\phi}(u)\coloneqq -C\phi(\pi(u))$. 
Then we have 
\begin{align*}
S_{\lambda}\eta_{C, \phi}(u)
&=\sup_{v\in \pi^{-1}(\{\pi(u)\})}(-\lambda \dist_E(u, v)^p+C\phi(\pi(v)))\\
&=\sup_{v\in \pi^{-1}(\{\pi(u)\})}(-\lambda \dist_E(u, v)^p+C\phi(\pi(u)))=C\phi(\pi(u)).
\end{align*}
Since we can decompose $\eta_{C, \phi}=(\zeta\circ \pi)\xi$ where $\zeta\equiv 1$ and $\xi=\eta_{C, \phi}$, we calculate
\begin{align*}
 H^*_{\lambda, \mathfrak{m}}(-\mathfrak{n})&\geq \sup_{C\in \R}\left(-\int_E
\eta_{C\phi}
d\mathfrak{n}-
\int_\Omega\int_E
S_{\lambda}\eta_{C\phi} d\mathfrak{m}^\omega d\sigma(\omega)\right)\\
&= \sup_{C\in\mathbb{R}} C
\left(\int_E\phi(\pi(v)) d\mathfrak{n}(v)-\int_E \phi(\pi(u)) d\mathfrak{m}(u)\right)\\
&= \sup_{C\in\mathbb{R}} C
\left(\int_E\phi(\pi(v)) d\mathfrak{n}(v)-\int_\Omega \phi d\sigma\right)
=\infty.
\end{align*}
Now suppose $\mathfrak{n}\in \mathcal{X}^\ast_p$ is not nonnegative. 
Here, $\mathfrak{n}$ is said to be nonnegative if 
$\mathfrak{n}(E')\geq0$ for any measurable set $E'\subset E$,
hence there exists some 
$\eta\in \mathcal{X}_p$ such that $\eta\geq 0$ everywhere and
\[
-\int_E \eta d \mathfrak{n}
>0.
\]
Then it is clear from the definition that $-S_{\lambda}(C\eta)\geq 0$ on $E$ for any constant $C>0$, hence we can again calculate
\begin{align*}
H_{\lambda, \mathfrak{m}}^*(-\mathfrak{n})
\geq \sup_{C>0}
\left( -\int_E C\eta d \mathfrak{n}- \int_\Omega\int_E
S_{\lambda}(C\eta) d\mathfrak{m}^\omega d\sigma(\omega) \right)
\geq \sup_{C>0}\left(-C\int_E \eta d \mathfrak{n} \right)=\infty.
\end{align*}
\end{proof}
%%%%%%%%%%%%%%%%%%%%%%%%%%
We are now ready to prove our duality result for $\scrmk_{p, q}$-barycenters.
\begin{proof}[Proof of Theorem~\ref{thm: barycenters}~\eqref{thm: disint bary duality}]
Let $\mathfrak{n}\in \mathcal{P}^\sigma_{p,q}(E)$
and $(\eta_k)_{k=1}^K$ a collection in $\mathcal{X}_p$ such that 
\[
\sum_{k=1}^K \eta_k\equiv 0.
\]
For each $k$ fix
$(\zeta_k, \xi_k)\in \mathcal{Z}_{r', \sigma}\times \mathcal{X}_p$ such that $\eta_k=(\zeta_k\circ\pi)\xi_k$ (which is always possible, for example by taking $\zeta_k\equiv 1$, $\xi_k\equiv \eta_k$).
Since $\mk_p^E(\delta^\omega_{E,y_0}, \mathfrak{n}^\omega)<\infty$ for $\sigma$-a.e.~$\omega$, for all $k$, using~\eqref{eqn: disint moment estimate} we have
\begin{align*}
    \left|\int_E\xi_k(u)d\mathfrak{n}^\omega(u)\right|
    &\leq\left\| \xi_k\right\|_{\mathcal{X}_p}\int_E\left(1+\dist^p_{E, y_0}(\pi(u),u)^p\right)d\mathfrak{n}^\omega(u)\\
    \leq \left\| \xi_k\right\|_{\mathcal{X}_p}(1+2^{p-1}(\widetilde{C}+\mk_p^E(\delta^\omega_{E,y_0},\mathfrak{n}^\omega)^p))<\infty.
\end{align*}
Then for such $\omega\in \Omega$  and $1\leq k \leq K$, 
we can first integrate the inequality
\[
\lambda_k\dist_E(u, v)^p
\geq
-S_{\lambda_k} \xi_k(u)-\xi_k(v)
\]
which holds for any $u$, $v\in E$ such that $\pi(u)=\pi(v)$, against a $p$-optimal coupling between $\mathfrak{m}_k^\omega$ and~$\mathfrak{n}^\omega$, 
then multiply by~$\zeta_k(\omega)$ and integrate in $\omega$ against $\sigma$ to obtain
\begin{align*}
\lambda_k \scrmk_{p,q}(\mathfrak{m}_k, \mathfrak{n})^p
\geq\lambda_k\int_\Omega\zeta_k \mk_p^E(\mathfrak{m}_k^\bullet, \mathfrak{n}^\bullet)^pd\sigma  
&\geq 
-\int_\Omega \zeta_k \int_E S_{\lambda_k} \xi_k  d\mathfrak{m}_k^\bullet d\sigma  
-
\int_\Omega \zeta_k \int_E \xi_kd\mathfrak{n}^\bullet d\sigma \\
%%%%
&=-\int_\Omega \zeta_k \int_E S_{\lambda_k} \xi_k  d\mathfrak{m}_k^\bullet d\sigma -\int_E \eta_k d\mathfrak{n}.
\end{align*}
Since $\sum_{k=1}^K \eta_k\equiv 0$, 
taking a supremum over all such pairs $(\zeta_k, \xi_k)$, then summing over $1\leq k \leq K$ in the above inequality gives
\[
\sum_{k=1}^K\lambda_k \scrmk_{p,q}(\mathfrak{m}_k,\mathfrak{n})^p
\geq 
-\sum_{k=1}^K H_{\lambda_k, \mathfrak{m}_k}(\eta_k)-\int_E \sum_{k=1}^K\eta_k d\mathfrak{n}=-\sum_{k=1}^K H_{\lambda_k, \mathfrak{m}_k}(\eta_k).
\]
Thus it follows that
\begin{align}\label{inequality1}
\begin{split}
\inf_{\mathfrak{n}\in \mathcal{P}^\sigma_{p,q}(E)} \sum_{k=1}^K \lambda_k \scrmk_{p,q}\left(\mathfrak{m}_k,\mathfrak{n}\right)^p
\geq 
\sup\left\{ 
-\sum_{k=1}^K H_{\lambda_k, \mathfrak{m}_k}(\eta_k)
\Bigm| 
\sum_{k=1}^K \eta_k\equiv 0,\ \eta_k \in \mathcal{X}_p\right\}.
\end{split}
\end{align}

Let us now show the reverse inequality.
It follows from Proposition~\ref{prop: conjugate} that
\begin{align*}
\inf_{\mathfrak{n} \in \mathcal{P}^\sigma_{p,q}(E)} \sum_{k=1}^K \lambda_k \scrmk_{p,q}\left(\mathfrak{m}_k,\mathfrak{n}\right)^p
=
\inf_{\mathfrak{n} \in \mathcal{P}^\sigma_{p,q}(E)}
\sum_{k=1}^K\lambda_k \scrmk_{p,q}(\mathfrak{m}_k, \mathfrak{n})^{p}
%%%%
=
\inf_{\mathfrak{n} \in \mathcal{P}^\sigma_{p,q}(E)}
\sum_{k=1}^K H_{\lambda_k, \mathfrak{m}_k}^\ast(-\mathfrak{n}).
\end{align*}
Define the function $H$ on $\mathcal{X}_p$
as the infimal convolution of $\{H_{\lambda_k, \mathfrak{m}_k}\}_{k=1}^K$, that is, defined for 
$\eta\in\mathcal{X}_p$ by 
\[
H(\eta)
\coloneqq 
\inf
\left\{
\sum_{k=1}^K H_{\lambda_k, \mathfrak{m}_k}(\eta_k)
\Bigm| 
\sum_{k=1}^K \eta_k\equiv \eta,\ \eta_k \in \mathcal{X}_p
\right\}.
\]
%%%%
Then \eqref{inequality1} implies
\begin{align}\label{inequality2}
\inf_{\mathfrak{n}\in \mathcal{P}^\sigma_{p,q}(E)} \sum_{k=1}^K \lambda_k \scrmk_{p,q}\left(\mathfrak{m}_k,\mathfrak{n}\right)^p
\geq -H(0).
\end{align}
Note that $H$ is convex 
since each of $\{H_{\lambda_k, \mathfrak{m}_k}\}_{k=1}^K$ is proper and convex by  Lemma~\ref{lem: lsc convex},  and then by~\cite{BorweinVanderwerff19}*{Lemma 4.4.15}
the Legendre--Fenchel transform of~$H$ satisfies 
\begin{align}\label{eqn: inf convolution conjugate}
H^{\ast}(\mathfrak{n})
=\sum_{k=1}^K H_{\lambda_k, \mathfrak{m}_k}^\ast(\mathfrak{n})
\quad \text{for }\mathfrak{n}\in \mathcal{X}_{p}^\ast.
\end{align}
Let $\mathcal{X}^{\ast\ast}_p$ be the dual of $\mathcal{X}^\ast_p$
%%%
and regard $\mathcal{X}_p$ as a subset of $\mathcal{X}^{\ast\ast}_p$ under the natural isometric embedding.
For $\mathfrak{f} \in \mathcal{X}^{\ast\ast}_p$, 
the Legendre--Fenchel transform of $H^\ast$ on $\mathcal{X}^{\ast\ast}_p$ 
is given by
\[
H^{\ast\ast}(\mathfrak{f})
\coloneqq 
\sup_{\mathfrak{n} \in \mathcal{X}_p^\ast}
\left( \mathfrak{f}(\mathfrak{n} )
-H^\ast(\mathfrak{n})
\right).
\]
%%%
Then we observe from Proposition~\ref{prop: conjugate} and \eqref{eqn: inf convolution conjugate} that 
\begin{align}\label{inequality4}
\begin{split}
-H^{\ast\ast}(0)
=\inf_{\mathfrak{n}\in\mathcal{X}^\ast_p}H^\ast(-\mathfrak{n})
=\inf_{\mathfrak{n}\in\mathcal{X}^\ast_p}
\sum_{k=1}^K H_{\lambda_k, \mathfrak{m}_k}^\ast(-\mathfrak{n})
=
\inf_{\mathfrak{n}\in\mathcal{P}^\sigma_{p,q}(E)}
\sum_{k=1}^K \lambda_k \scrmk_{p,q}( \mathfrak{m}_k,\mathfrak{n})^p.
\end{split}
\end{align}
Thus by \eqref{inequality2} and \eqref{inequality4} it is enough to show $H^{\ast\ast}(0)=H(0)$.
%%%

To this end, first note by Proposition~\ref{prop: conjugate} combined with \eqref{eqn: inf convolution conjugate} we see  
\[
H^*(-\delta^\bullet_{E,y_0}\otimes \sigma)=\sum_{k=1}^K \lambda_k \scrmk_{p, q}(\delta^\bullet_{E,y_0}\otimes \sigma, \mathfrak{m}_k)<\infty. 
\]
Thus since its Legendre--Fenchel transform is not identically $\infty$,
we see $H$ never takes the value $-\infty$. At the same time using  $H_{\lambda, \mathfrak{m}}(0)\leq 0$,
\begin{align*}
 H(0)&\leq \sum_{k=1}^K H_{\lambda_k, \mathfrak{m}_k}(0)\leq 0,
\end{align*}
hence $H$ is not identically $\infty$, in particular it is proper. 

Recall each $\lambda_k$ is strictly positive by assumption.  
Suppose $\eta\in \mathcal{X}_p$ with
\[
\left\| \eta\right\|_{\mathcal{X}_p}\leq 2^{1-p}\cdot K\cdot \min_{1 \leq k\leq K}\lambda_k.
\]
Then, using that 
\begin{align*}
2^{1-p}\dist_{E,y_0}^p(\omega,v) -  \dist_E(u, v)^p\leq
\dist_{E,y_0}^p(\omega,u),
\end{align*}
followed by~\eqref{eqn: disint moment estimate} in the calculation below,
\begin{align*}
    H(\eta)
    &\leq\sum_{k=1}^KH_{\lambda_k, \mathfrak{m}_{k}}(K^{-1}\eta)\\
%%%
    &\leq\sum_{k=1}^K
     \int_\Omega\int_E S_{\lambda_k}\left(K^{-1}\eta\right) d\mathfrak{m}_k^\omega d\sigma(\omega)\\
    &\leq\sum_{k=1}^K\int_\Omega\int_E\sup_{v\in \pi^{-1}(\{\pi(u)\})}\left(-\lambda_k\dist_E(u, v)^p
    +K^{-1}\left\| \eta\right\|_{\mathcal{X}_p}(1+\dist_{E,y_0}^p(\pi(v),v))\right) d\mathfrak{m}_k^\omega(u) d\sigma(\omega)\\
&
\leq\sum_{k=1}^K \lambda_k\int_\Omega\int_E\sup_{v\in \pi^{-1}(\{\pi(u)\})}\left(-\dist_E(u, v)^p
    +2^{1-p}(1+\dist_{E,y_0}^p(\pi(v),v))\right) d\mathfrak{m}_k^\omega(u) d\sigma(\omega)\\
%%%%
    &\leq\sum_{k=1}^K\lambda_k\int_\Omega\int_E \sup_{v\in \pi^{-1}(\{\pi(u)\})}\left(2^{1-p}+\dist_{E,y_0}^p(\pi(v),u)\right) d\mathfrak{m}_k^\omega(u) d\sigma(\omega)\\
    &\leq\sum_{k=1}^K\lambda_k \left[2^{1-p}+2^{p-1}\left(\widetilde{C}+\scrmk_{p, q}(\delta^\bullet_{E,y_0}\otimes\sigma, \mathfrak{m}_k)^p\right)\right]<\infty,
\end{align*}
proving that $H$ is bounded from above in a neighborhood of $0$. Thus by \cite{BorweinVanderwerff19}*{Proposition 4.1.4 and Proposition 4.4.2 (a)}, we obtain $H^{\ast\ast}(0)=H(0)$, finishing the proof.
\end{proof}
%%%%

\subsection{Uniqueness of disintegrated barycenters}
In this final subsection, we prove that when the fiber $Y$ is a Riemannian manifold, $\scrmk_{p, q}$-barycenters are unique under some absolute continuity conditions, when $p>1$ and $q<\infty$. This subsection will be the most involved.

We start by noting that in the case $q=\infty$, it is possible to construct many examples where $\scrmk_{p, \infty}$-barycenters are not unique; the next examples includes all cases when $\sigma$ is not a delta measure and the fiber $Y$ is a connected, complete Riemannian manifolds of any kind (with or without boundary).

At this point, we define the collection~$\{V_j\}_{j\in \N}$ consisting of mutually disjoint Borel sets by 
\begin{align}\label{eqn: disjoint cover}
 V_1\coloneqq \{\omega\in \Omega\mid \chi_1(\omega)>0\},\quad V_j\coloneqq \{\omega\in \Omega\mid \chi_j(\omega)>0\}\setminus\bigcup_{j'=1}^{j-1}V_{j'},\ j\geq 2,
\end{align} 
by construction $\chi_j>0$ on $V_j$ and $V_j\subset U_j$ for each $j\in \N$. Since $\{\chi_j\}_{j\in \N}$ is a partition of unity, we see that $\{V_j\}_{j\in \N}$ covers $\Omega$. This disjoint cover will also play a role in the proof of uniqueness.
\begin{example}\label{ex: q=infinity not unique}
Let $1< p<\infty$ (the case $p=1$ may have nonuniqueness for other reasons, see Example~\ref{ex: p=1 nonunique} below), 
 make the same assumptions as in Theorem~\ref{thm: barycenters}~\eqref{thm: disint bary exist}, and also assume $(Y, \dist_Y)$ is any geodesic space. 
Also take two distinct elements $\mu_0,\mu_1\in\mathcal{P}_p(Y)$, 
and assume there exists a measurable $\Omega'\subset \Omega$ with $0<\sigma(\Omega')<1$, 
and define for any Borel $A\subset E$,
\begin{align*}
        (\mathfrak{m}^\bullet_k\otimes\sigma)(A)&\coloneqq
        \begin{dcases}
            \sum_{j\in \N}\int_{V_j}(\Xi_{j, \bullet})_\sharp\mu_0(A)d\sigma,&\text{if }1\leq k \leq K-1,\\
            \sum_{j\in \N}\left(\int_{V_j\cap \Omega'}(\Xi_{j, \bullet})_\sharp\mu_0(A)d\sigma +\int_{V_j\setminus \Omega'}(\Xi_{j, \bullet})_\sharp\mu_1(A)d\sigma\right),&\text{if }k=K,
        \end{dcases}
    \end{align*}
 where we recall that $\{V_j\}_{j\in \N}$ is defined by~\eqref{eqn: disjoint cover}. By~\cite{KitagawaTakatsu25a}*{Lemma 2.5}, each of these are elements of $\mathcal{P}^\sigma_{p, \infty}(E)$, with disintegrations with respect to $\pi$ given by $\mathfrak{m}_k=\mathfrak{m}_k^\bullet\otimes\sigma$ where
    \begin{align*}
        \mathfrak{m}_k^\omega&\coloneqq
        \begin{dcases}
            \sum_{j\in \N}\mathds{1}_{V_j}(\omega)(\Xi_{j, \omega})_\sharp\mu_0,&\text{if }1\leq k \leq K-1,\\
            \sum_{j\in \N}\mathds{1}_{V_j}(\omega)(\Xi_{j, \omega})_\sharp\left(\mathds{1}_{\Omega'}(\omega)\mu_0 +\mathds{1}_{\Omega\setminus \Omega'}(\omega)\mu_1\right),&\text{if }k=K.
        \end{dcases}
    \end{align*}
   
For any $\mathfrak{n}\in \mathcal{P}_{p,\infty}^{\sigma}(E)$,  $\barypower>0$, and $(\lambda_k)_{k=1}^K\in \Lambda_K$,
we calculate
\begin{align*}
\sum_{k=1}^K \lambda_k \scrmk_{p,\infty}\left(\mathfrak{m}_k,\mathfrak{n}\right)^\barypower
&=
(1-\lambda_K)
\scrmk_{p,\infty}\left(\mathfrak{m}_1,\mathfrak{n}\right)^\barypower
+
\lambda_K\scrmk_{p,\infty}\left(\mathfrak{m}_K,\mathfrak{n}\right)^\barypower\\
%%%
&\geq 
(1-\lambda_K)
\esssup_{\omega\notin \Omega'}\mk_{p}^E(\mathfrak{m}^\omega_1, \mathfrak{n}^\omega)^\barypower+\lambda_K
\esssup_{\omega\notin \Omega'}\mk_{p}^E(\mathfrak{m}^\omega_K, \mathfrak{n}^\omega)^\barypower.
\end{align*}
%%%

Let $\nu\in \mathcal{P}_p(Y)$ be a minimizer of $(1-\lambda_K)\mk_p^Y(\mu_0, \cdot)^\barypower+ \lambda_K\mk_p^Y(\mu_1, \cdot)^\barypower$, 
then for each $\omega\notin\Omega'$, if $j_0$ is the unique index such that $\omega\in V_{j_0}$,
\begin{align*}
&(1-\lambda_K)
\mk_{p}^E(\mathfrak{m}^\omega_1, \mathfrak{n}^\omega)^\barypower+\lambda_K
\mk_{p}^E(\mathfrak{m}^\omega_K, \mathfrak{n}^\omega)^\barypower\\
&=(1-\lambda_K)
\mk_{p}^Y( \mu_0, (\Xi_{j_0,\omega}^{-1})_\sharp \mathfrak{n}^\omega)^\barypower
+
\lambda_K
\mk_{p}^Y( \mu_1, (\Xi_{j_0,\omega}^{-1})_\sharp \mathfrak{n}^\omega)^\barypower\\
&\geq 
(1-\lambda_K)\mk_{p}^Y( \mu_0, \nu)^\barypower
+
\lambda_K \mk_{p}^Y( \mu_1, \nu)^\barypower
\end{align*}
hence if $\mu\in \mathcal{P}_p(Y)$ satisfies 
\begin{align*}
\mk_p^Y(\mu_0,\mu)^\barypower \leq  (1-\lambda_K)
 \mk_{p}^Y(\mu_0, \nu)^\barypower
 +\lambda_K\mk_{p}^Y(\mu_1, \nu)^\barypower,
\end{align*} 
the measure 
\begin{align*}
    \sum_{j\in \N}\mathds{1}_{V_j}(\Xi_{j, \bullet})_\sharp\left( \mathds{1}_{\Omega'}\mu+\mathds{1}_{\Omega\setminus \Omega'}\nu \right)\otimes \sigma,
\end{align*} (which belongs to $\mathcal{P}^\sigma_{p, q}(E)$ by~\cite{KitagawaTakatsu25a}*{Lemma 2.5}) 
is a minimizer of
\[
\mathfrak{n}\mapsto \sum_{k=1}^K \lambda_k \scrmk_{p,\infty}\left(\mathfrak{m}_k,\mathfrak{n}\right)^\barypower
\quad \text{on } \mathcal{P}^\sigma_{p, q}(E).
\]
Thus since $\lambda_K\neq 0$, $1$, this yields infinitely many possible minimizers. 
\end{example}
Also, we can see $\scrmk_{1, q}$-barycenters may not be unique due to nonuniqueness of $\mk_1^Y$-barycenters. Recall that for any pair of sets $X$ and $Y$, $c: X\times Y\to (-\infty, \infty]$, and $\phi: X\to (-\infty, \infty]$, the \emph{$c$-transform of $\phi$} is the function $\phi^c: Y\to (-\infty, \infty]$ defined by 
\begin{align*}
    \phi^c(y)\coloneqq\sup_{x\in X}(-c(x, y)-\phi(x)).
\end{align*}
\begin{example}\label{ex: p=1 nonunique}
    Let $(\mu_k)_{k=1}^K\in\mathcal{P}_p(Y)^K$ to be determined 
    and define the measures 
    $\mathfrak{m}_k\in\mathcal{P}^\sigma_{p, q}(E)$ for $1\leq k\leq K$ by 
\begin{align*}    
\mathfrak{m}_k\coloneqq \left\{\left(\sum_{j\in \N}\mathds{1}_{V_j}(\Xi_{j, \bullet})\right)_\sharp\mu_k\right\}\otimes\sigma.
\end{align*}
For $(\lambda_k)_{k=1}^K \in \Lambda_K$, by  convexity of the $L^q(\sigma)$ norm, for any $\mathfrak{n}\in \mathcal{P}^\sigma_{p, q}(E)$ we have
    \begin{align*}
        \sum_{k=1}^K\lambda_k\scrmk_{p, q}(\mathfrak{m}_k, \mathfrak{n})
        &\geq \left\| \lambda_k\sum_{k=1}^K\mk_p^E\left(\sum_{j\in \N}\mathds{1}_{V_j}(\Xi_{j, \bullet})_\sharp\mu_k, \mathfrak{n}^\bullet\right)\right\|_{L^q(\sigma)}.
    \end{align*}
    For any measure of the form 
    \[\mathfrak{n}\coloneqq \left(\sum_{j\in \N}\mathds{1}_{V_j}(\Xi_{j, \bullet})_\sharp\nu_0\right)\otimes \sigma    
\]
where $\nu_0\in \mathcal{P}_p(Y)$, if $j_0$ is the unique index such that $\omega\in V_{j_0}$ we have
    \begin{align*}
        \mk_p^E\left(\sum_{j\in \N}\mathds{1}_{V_j}(\Xi_{j, \omega})_\sharp\mu_k, \mathfrak{n}^\omega\right)
        &=\mk_p^E((\Xi_{j_0, \bullet})_\sharp\mu_k, (\Xi_{j_0, \bullet})_\sharp\nu_0))=\mk_p^Y(\mu_k, \nu_0).
    \end{align*}
    Hence if $\nu_0$ is an $\mk_p^Y$-barycenter, we see $\mathfrak{n}$ will be a $\scrmk_{p, q}$-barycenter, thus if $(\mu_k)_{k=1}^K$ can be chosen in a way that there exist nonunique $\mk_p^Y$-barycenters, this will lead to nonuniqueness of $\scrmk_{p, q}$-barycenters as well.

For $p=1$, it is strongly suspected that such configurations yielding nonunique barycenters exist for various $(\lambda_k)_{k=1}^K$, we give such an example in the case of $Y=\R$ with the measures $\mu_k$ absolutely continuous, and $\lambda_k\equiv K^{-1}$ where $K$ is even, which incidentally, relies on our duality result Theorem~\ref{thm: classical barycenters}. Let $\mathcal{H}^1$ be $1$-dimensional Hausdorff measure, and define 
    \begin{align*}
        \nu_0\coloneqq \mathcal{H}^1|_{[-2,-1]},\qquad
        \nu_1\coloneqq \mathcal{H}^1|_{[1,2]},\qquad
        \mu_k\coloneqq 
        \begin{cases} 
\nu_0, & \text{if $k$ even},\\
\nu_1, & \text{if $k$ odd}.
        \end{cases}\\
\end{align*}
Then we calculate
\begin{align*}
    \frac{1}{K}\sum_{k=1}^K  \scrmk_{1,q}\left(\mathfrak{m}_k,\nu_0\otimes \sigma\right)
    &=\frac{1}{K}\sum_{k=1}^K\mk_1^\R(\mu_k, \nu_0)
    \leq \frac{1}{K}\sum_{\text{$k$ odd}}\int_1^2\left| t-(t-3)\right| dt=\frac{3}{2},\\
    \frac{1}{K}\sum_{k=1}^K  \scrmk_{1,q}\left(\mathfrak{m}_k,\nu_1\otimes \sigma\right)
    &=\frac{1}{K}\sum_{k=1}^K\mk_1^\R(\mu_k, \nu_1)
    \leq \frac{1}{K}\sum_{\text{$k$ even}}\int_{-2}^{-1}\left| t-(t+3)\right| dt=\frac{3}{2}.
\end{align*}
Now define $\phi: \R\to \R$ by
\begin{align*}
\phi(t)\coloneqq 
    \begin{cases}
        -4-t,&\text{if }-4\leq t< -2,\\
        t,&\text{if }-2\leq t< 2,\\
        4-t,&\text{if }2\leq t\leq 4,\\
        0,&\text{else}.
    \end{cases}
\end{align*}
Since $\phi$ is $1$-Lipschitz, it is classical that $\phi^{\dist_\R}=-\phi$, then if we define
\begin{align*}
    \phi_k(t)&\coloneqq
    \begin{dcases}
            -\frac{\phi(t)}{K},& \text{if $k$ even},\\
            \frac{\phi(t)}{K},& \text{if $k$ odd},
        \end{dcases}
\end{align*}
we have  
\begin{align*}
\sum_{k=1}^K\phi_k &\equiv 0,\\
-\sum_{k=1}^K\int_{\R}\phi_k^{\lambda_k\dist_\R} d\mu_k
    &=-\sum_{\text{$k$ even}}\int_{-2}^{-1}\frac{\phi(t)}{K}dt+\sum_{\text{$k$ odd}}\int_1^2\frac{\phi(t)}{K}dt
    =\frac{1}{2}\left(\int_1^2tdt-\int_{-2}^{-1}tdt\right)
    =\frac{3}{2}.
\end{align*}
By Theorem~\ref{thm: classical barycenters}~\eqref{thm: classical bary duality} we see that both $\nu_0$ and $\nu_1$ are $\mk_1^\R$-barycenters.
\end{example}
For the remainder of the section $Y$ will be a complete, connected Riemannian manifold, possibly with boundary, 
and $\dist_Y$ (resp.\,$\Vol_Y$) will be the Riemannian distance function (resp.\,volume measure). 
We will also write
\begin{align*}
    \inj_1(y)&\coloneqq\min\left\{1, 
    \sup\left\{r>0\biggm|\! 
 \begin{tabular}{l}$\exp_y$ is a diffeomorphism on the open ball \\of radius $r$ centered at $0\in T_y(Y\setminus\partial Y)$ \end{tabular}
\right\}
\right\}
\quad\text{for }y\in Y\setminus\partial Y,\\
    \inj(A)&\coloneqq\inf_{y\in A}\inj_1(y)
\quad \text{for }A\subset Y\setminus\partial Y.
\end{align*}
We will write $B_r(y)$ and $\overline{B}_r(y)$ for the \emph{open} and \emph{closed} ball, respectively, of radius $r>0$ centered at $y\in Y\setminus \partial Y$. We will only consider balls in the space $Y$ in this paper. 
Although $Y\setminus\partial Y$ may not be complete, by \cite{Boumal23}*{Lemmas 10.90 and 10.91}, we have $\inj(K)>0$ for any compact $K\subset Y\setminus\partial Y$.
 
First we show a very simple lemma on covering boundaries of geodesic balls.
\begin{lemma}\label{lem: boundary covering}
For any compact set $K\subset Y\setminus \partial Y$ and $0<r<\inj(K)/2$, there is an $N\in \N$ depending only on $K$ and $r$ such that for any $y\in K$, there exists a set of points $\{y_i\}_{i=1}^N\subset \overline{B}_{5r/4}(y)\setminus B_{3r/4}(y)$ such that $\{B_{r/2}(y_i)\}_{i=1}^N$ is a cover of $\partial B_r(y)$.
\end{lemma}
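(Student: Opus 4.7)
The plan is to take $\{y_i\}$ as a maximal $(r/4)$-separated subset of $\partial B_r(y)$ itself, and then bound its cardinality uniformly over $y\in K$ by a volume-packing argument. For each $y\in K$, Zorn's lemma produces a maximal subset $\{y_i\}_{i=1}^{N(y)}\subset \partial B_r(y)$ with $\dist_Y(y_i,y_j)\geq r/4$ for $i\neq j$; trivially these points lie in $\partial B_r(y)\subset \overline{B}_{5r/4}(y)\setminus B_{3r/4}(y)$. By maximality, every $z\in \partial B_r(y)$ lies within distance $r/4$ of some $y_i$, so $\{B_{r/2}(y_i)\}_{i=1}^{N(y)}$ indeed covers $\partial B_r(y)$. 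The triangle inequality further shows that the open balls $\{B_{r/8}(y_i)\}_{i=1}^{N(y)}$ are pairwise disjoint and contained in $B_{9r/8}(y)\setminus \overline{B}_{7r/8}(y)$.

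To bound $N(y)$ uniformly, note that $9r/8<2r<\inj(K)\leq \inj_1(y)$ for each $y\in K$, so that $\exp_y$ is a diffeomorphism on the open Euclidean ball of radius $2r$ in $T_y(Y\setminus\partial Y)$ and the closed geodesic ball $\overline{B}_{9r/8}(y)$ is a compact subset of $Y\setminus\partial Y$. Consequently $K_1\coloneqq \{z\in Y:\dist_Y(z,K)\leq 9r/8\}$ is closed and bounded in the complete manifold $Y$, hence compact by Hopf--Rinow, with $K_1\subset Y\setminus\partial Y$. In particular $\Vol_Y(B_{9r/8}(y))\leq v_+\coloneqq \Vol_Y(K_1)<\infty$ for all $y\in K$. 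Moreover, the function $z\mapsto \Vol_Y(B_{r/8}(z))$ is lower semicontinuous on $Y\setminus\partial Y$ (Riemannian spheres having zero volume) and strictly positive there, so by compactness of $K_1$ it attains a positive minimum $v_->0$ on $K_1$.

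Since each $y_i\in K_1$ and the balls $\{B_{r/8}(y_i)\}$ are pairwise disjoint subsets of $B_{9r/8}(y)$, summing volumes yields
\[
N(y)\,v_-\leq \sum_{i=1}^{N(y)}\Vol_Y(B_{r/8}(y_i))\leq \Vol_Y(B_{9r/8}(y))\leq v_+,
\]
hence $N(y)\leq v_+/v_-$. Setting $N\coloneqq \lceil v_+/v_-\rceil$ (which depends only on $K$ and $r$) and padding $\{y_i\}$ with repetitions if necessary produces a collection of exactly $N$ points retaining the covering property. The main obstacle is verifying that $\overline{B}_{9r/8}(y)\subset Y\setminus\partial Y$ uniformly for $y\in K$: one must carefully unpack the definition of $\inj_1(y)$ to confirm that $\exp_y$ applied to the closed Euclidean ball of radius $9r/8$ exhausts the closed geodesic ball without straying into $\partial Y$, so that no minimizing geodesic from $y$ of length at most $9r/8$ reaches the boundary.
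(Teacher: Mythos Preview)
Your proof is correct and takes a genuinely different route from the paper's. The paper argues by contradiction: if no uniform $N$ worked, one could find $\tilde y_\ell\in K$ whose sphere $\partial B_r(\tilde y_\ell)$ requires more than $\ell$ balls; passing to a subsequential limit $\tilde y_\infty\in K$, a single finite cover of the compact annulus $\overline{B}_{9r/8}(\tilde y_\infty)\setminus B_{7r/8}(\tilde y_\infty)$ serves for all nearby $\tilde y_\ell$, a contradiction. This avoids any volume estimate and would work verbatim in a proper metric space. Your packing argument instead produces an explicit bound $N\le v_+/v_-$; the price is invoking a Hopf--Rinow type statement and volume continuity, which is more machinery but yields quantitative information.

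Regarding the ``main obstacle'' you flag: it is genuinely resolvable, and in fact one can show $\inj_1(y)\le \dist_Y(y,\partial Y)$ for every $y\in Y\setminus\partial Y$. Indeed, if $z_0\in\partial Y$ realizes $\dist_Y(y,\partial Y)=d$, any length-minimizing path from $y$ to $z_0$ is an interior geodesic up to its endpoint (an earlier boundary contact would contradict minimality of $d$), forcing $\exp_y$ to leave $Y\setminus\partial Y$ by time $d$. Hence $9r/8<2r<\inj(K)\le \dist_Y(y,\partial Y)$ for every $y\in K$, and the triangle inequality gives $K_1\subset Y\setminus\partial Y$. That said, your argument does not actually require this containment: the map $z\mapsto \Vol_Y(B_{r/8}(z))$ is lower semicontinuous and strictly positive on all of $Y$ (Fatou's lemma applied to $\liminf \mathds{1}_{B_{r/8}(z_n)}\ge \mathds{1}_{B_{r/8}(z)}$ gives lower semicontinuity without any sphere-volume claim), so $v_->0$ follows from compactness of $K_1$ alone, which holds since a complete, connected Riemannian manifold with boundary is a locally compact complete length space and hence proper.
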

\begin{proof}
    Suppose the lemma does not hold, then there exists a sequence $(\tilde{y}_\ell)_{\ell\in \N}\subset K$ such that no collection of $\ell$ or fewer open balls of radius $r/2$ with centers in $\overline{B}_{5r/4}(\tilde{y}_\ell)\setminus B_{3r/4}(\tilde{y}_\ell)$ is a cover of $\partial B_r(\tilde{y}_\ell)$.
    By compactness of $K$, we may pass to a convergent subsequence $(\tilde{y}_\ell)_{\ell\in\mathbb{N}}$ (not relabeled) with limit $\tilde{y}_\infty \in K$. 
    Now, also by compactness, for some $N\in \N$ there is a cover $\{B_{r/2}(y_i)\}_{i=1}^N$ of $\overline{B}_{9r/8}(\tilde{y}_\infty)\setminus B_{7r/8}(\tilde{y}_\infty)$ with $y_i\in \overline{B}_{9r/8}(\tilde{y}_\infty)\setminus B_{7r/8}(\tilde{y}_\infty)$ for $1\leq i\leq N$. 
    Since $r<\inj(K)/2$ and $\tilde{y}_\ell\in K$, we see that $y\in \partial B_r(\tilde{y}_\ell)$ implies $\dist_Y(\tilde{y}_\ell, y)=r$.
Then by the triangle inequality, for $\ell>N$ satisfying $\dist_Y(\tilde{y}_\ell, \tilde{y}_\infty)<r/8$,  we have
\[
\partial B_r(\tilde{y}_\ell)\subset \overline{B}_{9r/8}(\tilde{y}_\infty)\setminus B_{7r/8}(\tilde{y}_\infty)\] 
while each $y_i\in \overline{B}_{5r/4}(\tilde{y}_\ell)\setminus B_{3r/4}(\tilde{y}_\ell)$, a contradiction.
\end{proof}
It is well known that local boundedness for a $\lambda \dist_Y^p$-convex function translates to a Lipschitz bound. To show convergence of a maximizing sequence in the disintegrated barycenter dual problem from Theorem~\ref{thm: barycenters}~\eqref{thm: disint bary duality}, we will need to consider sequences of \emph{averages} constructed from the maximizing sequence. When $p=2$, the average of $\dist_{\R^n}^2$-transforms is also a $\dist_{\R^n}^2$-transform, but this does not hold for $p\neq 2$ or on more general manifolds $Y$. Thus in the next lemma, we will prove that under certain conditions, local Lipschitzness of the average of $\dist_Y^p$-transforms also follows from boundedness. The following lemma is stated in more generality than will be needed later.
\begin{lemma}\label{lem: avg c_p-convex}
    Fix $\lambda\in (0, 1]$, $R>0$, and suppose  $(g_m)_{m\in \N}$ is a sequence such that the functions 
 $f_m\coloneqq g_m^{\lambda \dist_Y^p}$ are bounded uniformly in $m\in\mathbb{N}$ on $\overline{B}_R(y_0)$. 
 If there exists an increasing sequence $(M_\ell)_{\ell\in \N}\subset \N$, and $\lambda_{\ell, m}\geq 0$ for each $\ell\in \N$ and $1\leq m\leq M_\ell$, and $C_1$, $C_2>0$ such that    
\begin{align*}
 \sup_{\ell\in \N}\frac{1}{M_\ell}\sum_{m=1}^{M_\ell}\lambda_{\ell, m}\leq C_1,\qquad
       \sup_{t\in \overline{B}_R(y_0)}  \frac{1}{M_\ell}\sum_{m=1}^{M_\ell}\lambda_{\ell, m}\left| f_m(t)\right|\leq C_2,
    \end{align*} 
    then the sequence
    \begin{align*}
    \left(\frac{1}{M_\ell}\sum_{m=1}^{M_\ell}\lambda_{\ell, m}f_m\right)_{\ell\in\N}
    \end{align*}
     is uniformly Lipschitz on $\{y\in \overline{B}_{R/2}(y_0)\mid \dist_Y(y, \partial Y)\geq 2R^{-1}\}$. 
\end{lemma}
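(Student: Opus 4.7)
The plan is to prove the stronger statement that each individual $f_m$ is uniformly Lipschitz on $K_0 \coloneqq \{y \in \overline{B}_{R/2}(y_0) \mid \dist_Y(y, \partial Y) \geq 2R^{-1}\}$, with Lipschitz constant depending only on $\lambda$, $p$, $R$, the uniform bound $M \coloneqq \sup_m \sup_{\overline{B}_R(y_0)}|f_m|$, and the local geometry of $Y$. The lemma then follows because $\frac{1}{M_\ell}\sum \lambda_{\ell, m} \leq C_1$ forces the averages to inherit Lipschitz constant at most $C_1$ times this uniform bound (the hypothesis on $C_2$ being redundant given boundedness and the $C_1$ bound). The case $p=1$ is immediate because each $f_m$ is a supremum of $\lambda$-Lipschitz functions and so is itself $\lambda$-Lipschitz; the remainder of the plan assumes $p > 1$.

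By \cite{Boumal23}*{Lemmas 10.90 and 10.91} applied to the compact $K_0 \subset Y\setminus\partial Y$, the quantity $\iota_0 \coloneqq \inj(K_0)$ is strictly positive; fix any $r \in (0, \min\{\iota_0/2, R/8, R^{-1}\})$. For each $y \in K_0$, Lemma~\ref{lem: boundary covering} yields a cover $\{B_{r/2}(y_i)\}_{i=1}^N$ of $\partial B_r(y)$ with $y_i \in \overline{B}_{5r/4}(y)\setminus B_{3r/4}(y) \subset \overline{B}_R(y_0)\cap(Y\setminus \partial Y)$ and $N$ uniform in $y$. The main step is to bound the location of near-maximizers: for $\epsilon > 0$ select $x_m^\ast$ with
\begin{align*}
    -\lambda\dist_Y(x_m^\ast, y)^p - g_m(x_m^\ast) \geq f_m(y) - \epsilon,
\end{align*}
and set $D_m \coloneqq \dist_Y(x_m^\ast, y)$. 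When $D_m > r$, a minimizing curve from $x_m^\ast$ to $y$ meets $\partial B_r(y)$ at some $z_m$ with $\dist_Y(x_m^\ast, z_m) = D_m - r$; choosing $y_i$ with $z_m \in B_{r/2}(y_i)$ gives $\dist_Y(x_m^\ast, y_i) < D_m - r/2$ by the triangle inequality. Plugging $x_m^\ast$ into the supremum defining $f_m(y_i)$ yields $-g_m(x_m^\ast) \leq M + \lambda(D_m - r/2)^p$, while near-optimality gives $-g_m(x_m^\ast) \geq \lambda D_m^p - M - \epsilon$. Subtracting and invoking the mean value theorem for $t\mapsto t^p$ produces
\begin{align*}
    \lambda p(D_m - r/2)^{p-1}(r/2) \leq \lambda[D_m^p - (D_m - r/2)^p] \leq 2M + \epsilon,
\end{align*}
which yields the uniform distance bound $D_m \leq D_\ast \coloneqq r/2 + (4M/(\lambda pr))^{1/(p-1)}$ upon sending $\epsilon \to 0$ through appropriate near-maximizers.

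With this uniform bound in hand, for $y, y'\in K_0$ with $\dist_Y(y,y')\leq 1$, the inequality $f_m(y') \geq -\lambda\dist_Y(x_m^\ast, y')^p - g_m(x_m^\ast)$ together with the near-equality at $y$ and the mean value theorem for $t \mapsto t^p$ gives $f_m(y') - f_m(y) \geq -\lambda p(D_\ast + 1)^{p-1}\dist_Y(y, y') - \epsilon$, and symmetrizing after $\epsilon \to 0$ produces $|f_m(y) - f_m(y')| \leq \lambda p(D_\ast + 1)^{p-1}\dist_Y(y,y')$; for $\dist_Y(y,y') > 1$ the bound $|f_m(y) - f_m(y')| \leq 2M$ can be absorbed into the same constant. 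Weighted averaging and the hypothesis $\frac{1}{M_\ell}\sum\lambda_{\ell,m} \leq C_1$ then yield the desired uniform Lipschitz bound on $\bigl(\frac{1}{M_\ell}\sum\lambda_{\ell, m}f_m\bigr)_{\ell\in\N}$. The main obstacle is the geometric control on the distance $D_m$ to near-maximizers, which is exactly where Lemma~\ref{lem: boundary covering} is indispensable: without a uniformly-sized collection of annulus points at which $f_m$ remains controlled, $x_m^\ast$ could drift arbitrarily far from $y$ and no pointwise Lipschitz estimate on $f_m$ would be possible.
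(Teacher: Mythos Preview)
Your approach is correct and takes a genuinely different, more direct route than the paper's. You exploit the uniform bound $M=\sup_m\sup_{\overline{B}_R(y_0)}|f_m|$ (which \emph{is} among the hypotheses) to bound each individual near-maximizer distance $D_m$, hence each $f_m$'s Lipschitz constant, and then the averaged Lipschitz bound follows trivially from the $C_1$ hypothesis; your observation that $C_2$ is redundant under the stated hypotheses is correct. The paper, by contrast, never bounds individual $D_m$'s: it partitions the indices $m$ into finitely many groups $I_i$ according to which ball $B_{r/2}(y_i)$ the point $\gamma_{m,t}(r)$ lands in, and then uses the $C_2$ bound at the \emph{fixed} point $y_i$ to control the weighted average $\frac{1}{M_\ell}\sum_{m\in I_i}\lambda_{\ell,m}\dist_Y(t'',s_{m,t})^{p-1}$ over that group. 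This is precisely why Lemma~\ref{lem: boundary covering} is essential in the paper's argument---one needs a fixed finite set of test points, independent of $m$, so that the pointwise-in-$t$ bound $C_2$ can be invoked. In your argument, however, Lemma~\ref{lem: boundary covering} is \emph{not} indispensable despite your final remark: you could simply evaluate $f_m$ at the crossing point $z_m$ itself (it lies in $\overline{B}_R(y_0)$ since $r<R/8$), because the uniform bound gives $|f_m(z_m)|\leq M$ even though $z_m$ depends on $m$. What your argument buys is simplicity and the Lipschitz bound on each $f_m$ separately; what the paper's argument buys is that it would still go through with uniform boundedness weakened to mere finiteness of each $f_m$ on $\overline{B}_R(y_0)$ together with the $C_2$ bound---so the lemma as stated carries some slack that your proof makes visible.
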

\begin{proof}
We can assume that $\lambda=1$ as 
\[
g_m^{\lambda \dist_Y^p}=
\lambda (\lambda^{-1} g_m)^{\dist_Y^p}.
\]
Since the result follows from \cite{Santambrogio15}*{Proposition 3.1} when $p=1$, assume $1<p<\infty$. 
Let $N\in \N$ be from applying Lemma~\ref{lem: boundary covering} with the choice of 
the set
\[
K\coloneqq \left\{y\in \overline{B}_R(y_0)\mid \dist_Y(y, \partial Y)\geq 2R^{-1}\right\},
\]
compact in $Y\setminus \partial Y$, and  $r\coloneqq \min\{\inj(K), R\}/2$. 
 Now let us write
 \begin{align*}
     \widehat{B}\coloneqq
     \left\{y\in \overline{B}_{R/2}(y_0)\mid \dist_Y(y, \partial Y)\geq 2R^{-1}\right\}.
 \end{align*} 
Fix $t \in \widehat{B}$ and $\varepsilon>0$, 
 then since each $f_m$ is finite on $\overline{B}_R(y_0)$, 
 for each $m$ there exists $s_{m, t}\in Y$ such that
 \[
  f_m(t)\leq - \dist_Y(t, s_{m, t})^p-g_m(s_{m, t})+\varepsilon.
 \] 
Then for any $t'\in Y$,  we have
\begin{align}
\begin{split}
f_m(t')+\varepsilon
&\geq - \dist_Y(t', s_{m, t})^p-g_m(s_{m, t})+\varepsilon\\
&\geq - \dist_Y(t', s_{m, t})^p+ \dist_Y(t, s_{m, t})^p+f_m(t)\\
&\geq p \dist_Y(t', s_{m, t})^{p-1}(\dist_Y(t, s_{m, t})-\dist_Y(t', s_{m, t}))+f_m(t).\label{eqn: f_m est}
\end{split}
\end{align}
Now let $\{B_{r/2}(y_i)\}_{i=1}^N$ be a cover of $\partial B_{r}(t)$ with 
$y_i\in \overline{B}_{5r/4}(t)\setminus B_{3r/4}(t)$, which exists from the application of Lemma~\ref{lem: boundary covering} above. By completeness and connectedness, there is at least one minimal, unit speed geodesic $\gamma_{m, t}$ from $t$ to $s_{m, t}$. For $1\leq i\leq N$, define
\begin{align*}
    B_i\coloneqq
    \begin{dcases}
        B_{r/2}(y_1),&\text{if }i=1,\\
        B_{r/2}(y_i) \setminus \bigcup_{i'=1}^{i-1}B_{r/2}(y_{i'}),&\text{if }2\leq i\leq N,
    \end{dcases}\qquad
    I_i\coloneqq\{m\in \N\mid \gamma_{m, t}(r)\in B_i\text{ and }s_{m, t}\not\in B_{2r}(t)\}.
\end{align*}
Then for $m\in I_i$, using that $r<\inj(K)/2$ and $t\in K$ we have
\begin{align*}
    \dist_Y(t, s_{m, t})-\dist_Y(y_i, s_{m, t})
    &\geq \dist_Y(t, s_{m, t})-\dist_Y(\gamma_{m, t}(r), s_{m, t})-\dist_Y(\gamma_{m, t}(r), y_i)\\
    &\geq \dist_Y(t, \gamma_{m, t}(r))-\frac{r}{2}= \frac{r}{2}.
\end{align*}
Using this we can calculate for each $1\leq i\leq N$, by taking $t'=y_i$ in~\eqref{eqn: f_m est} and noting that each $y_i\in \overline{B}_R(y_0)$,
 \begin{align*}
     C_2+\varepsilon
     &\geq \frac{1}{M_\ell}\sum_{m=1}^{M_\ell}\lambda_{\ell, m}\left| f_m(y_i)\right|+\varepsilon\\
     &\geq \frac{1}{M_\ell}\sum_{\substack{1\leq m \leq M_\ell,\\ m\in I_i}}\lambda_{\ell, m}\left[ p \dist_Y(y_i, s_{m, t})^{p-1}\left(\dist_Y(t, s_{m, t})-\dist_Y(y_i, s_{m, t})\right)+f_m(t)\right]\\
     &\geq \frac{pr}{2M_\ell}\sum_{\substack{1\leq m \leq M_\ell,\\ m\in I_i}}\lambda_{\ell, m}\dist_Y(y_i, s_{m, t})^{p-1}-\frac{1}{M_\ell}\sum_{m=1}^{M_\ell}\lambda_{\ell, m}\left| f_m(t)\right|\\
     &\geq\frac{pr}{2M_\ell}\sum_{\substack{1\leq m \leq M_\ell,\\ m\in I_i}}\lambda_{\ell, m}\left[ 2^{-p+1}\dist_Y(t'', s_{m, t})^{p-1}-\dist_Y(t'', y_i)^{p-1}\right]  -C_2\\
     &\geq\frac{pr}{2M_\ell}\sum_{\substack{1\leq m \leq M_\ell,\\ m\in I_i}}\lambda_{\ell, m}\left[ 2^{-p+1}\dist_Y(t'', s_{m, t})^{p-1}-(2R)^{p-1}\right]  -C_2
 \end{align*}
for any $t''\in \widehat{B}$. Hence, for $t_1$, $t_2\in \widehat{B}$, we find
\begin{align*}
        \frac{1}{M_\ell}\sum_{m=1}^{M_\ell} \lambda_{\ell, m}f_m(t_1)-\frac{1}{M_\ell}\sum_{m=1}^{M_\ell} \lambda_{\ell, m}f_m(t_2)
        &\leq \frac{1}{M_\ell}\sum_{m=1}^{M_\ell} \lambda_{\ell, m}\left( \dist_Y(t_2, s_{m, t_1})^p- \dist_Y(t_1, s_{m, t_1})^p+\varepsilon\right)\\
        &\leq \frac{p}{M_\ell}\sum_{m=1}^{M_\ell}\lambda_{\ell, m}\max\{\dist_Y(t_1, s_{m, t_1})^{p-1}, \dist_Y(t_2, s_{m, t_1})^{p-1}\}\\
        &\qquad\cdot\left| \dist_Y(t_2, s_{m, t_1})-\dist_Y(t_1, s_{m, t_1})\right|+\varepsilon C_1\\
        &\leq \frac{2^{p}}{r}(2C_2+\varepsilon+2^{p-2}prR^{p-1}C_1)\dist_Y(t_1, t_2)+\varepsilon C_1,
    \end{align*}
thus taking $\varepsilon$ to $0$ and then reversing the roles of $t_1$ and $t_2$ yields the uniform Lipschitz bound on~$\widehat{B}$.
\end{proof}
The above lemma also immediately gives an analogue of \cite{GangboMcCann96}*{Corollary C.5} which we will have use for later.
\begin{corollary}\label{cor: bounded is lipschitz}
    Fix $\lambda\in (0, 1]$ and suppose $R>0$. 
For a function $g$ on~$Y$, 
if  $f\coloneqq g^{\lambda \dist_Y^p}$ is bounded on $\overline{B}_r(y_0)$, then it is uniformly Lipschitz on 
the set $\{y\in \overline{B}_{R/2}(y_0)\mid \dist_Y(y, \partial Y)\geq 2R^{-1}\}$.
\end{corollary}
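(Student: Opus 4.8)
The plan is to obtain this as an immediate specialization of Lemma~\ref{lem: avg c_p-convex} to a constant sequence. First I would set $g_m \coloneqq g$ for every $m\in\N$, so that $f_m \coloneqq g_m^{\lambda\dist_Y^p} = f$ for all $m$; by the standing hypothesis the $f_m$ are then (trivially) bounded uniformly in $m$ on $\overline{B}_R(y_0)$. Next I would take the auxiliary data appearing in Lemma~\ref{lem: avg c_p-convex} to be $M_\ell \coloneqq 1$ for all $\ell\in\N$ and $\lambda_{\ell, m} \coloneqq 1$ (the only relevant index being $m = 1$). Then $\frac{1}{M_\ell}\sum_{m=1}^{M_\ell}\lambda_{\ell, m} = 1$, so the first of the two bounds in the hypotheses of Lemma~\ref{lem: avg c_p-convex} holds with $C_1 = 1$; and $\sup_{t\in\overline{B}_R(y_0)}\frac{1}{M_\ell}\sum_{m=1}^{M_\ell}\lambda_{\ell, m}\left|f_m(t)\right| = \sup_{\overline{B}_R(y_0)}\left|f\right|$, which is finite precisely by assumption, so the second bound holds with $C_2 \coloneqq \sup_{\overline{B}_R(y_0)}\left|f\right|$.

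Applying Lemma~\ref{lem: avg c_p-convex} with these choices then shows that the sequence $\left(\frac{1}{M_\ell}\sum_{m=1}^{M_\ell}\lambda_{\ell, m}f_m\right)_{\ell\in\N}$ is uniformly Lipschitz on $\{y\in\overline{B}_{R/2}(y_0)\mid \dist_Y(y,\partial Y)\geq 2R^{-1}\}$. But with the above choices this sequence is simply the constant sequence with value $f$, so $f$ is Lipschitz on that set, which is exactly the assertion of the corollary. I do not anticipate any real obstacle: the entire content is in matching the hypotheses of Lemma~\ref{lem: avg c_p-convex}, and the only point requiring the assumption of the corollary is the finiteness of $C_2$. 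I would conclude by remarking that passing to a constant sequence collapses the averaged statement of the lemma to the single-function statement sought here, so no separate argument is needed.
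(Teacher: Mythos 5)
Your proof is correct and matches the paper's, which likewise specializes Lemma~\ref{lem: avg c_p-convex} to $f_m\equiv f$ and $\lambda_{\ell, m}\equiv 1$. One cosmetic point: Lemma~\ref{lem: avg c_p-convex} asks for an \emph{increasing} sequence $(M_\ell)_{\ell\in\N}$, so you should take $M_\ell = \ell$ rather than $M_\ell \equiv 1$; this changes nothing since the average $\frac{1}{M_\ell}\sum_{m=1}^{M_\ell}\lambda_{\ell, m}f_m$ is still identically $f$.
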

\begin{proof}
    Simply apply Lemma~\ref{lem: avg c_p-convex} with $f_m\equiv f$ and $\lambda_{\ell, m}\equiv 1$.
\end{proof}
%%%%%%%%%%%%%%%%%CLAIM 1
Next we show a compactness result which will be applied to a  maximizing sequence in the dual problem for $\scrmk_{p, q}$-barycenters as described in Theorem~\ref{thm: barycenters}~\eqref{thm: disint bary duality}. Recall that $\{V_j\}_{j\in \N}$ is a partition of $\Omega$ into Borel sets defined by~\eqref{eqn: disjoint cover}.
\begin{lemma}\label{lem: subsequence of maximizing sequence}
    For each $1\leq k\leq K$, let $(\xi_{k, m})_{m\in \N}$ and $(\zeta_{k, m})_{m\in \N}$ be sequences in $\mathcal{X}_p$ and $\mathcal{Z}_{r', \sigma}$ respectively, write $\eta_{k, m}(v)\coloneqq \zeta_{k, m}(\pi(v))\xi_{k, m}(v)$, and assume that, 
\begin{align}
    &\sum_{j\in \N}\chi_j\mathds{1}_{V_j}\xi_{k, m}(\Xi_j(\cdot, y_0))=0,
    &&\text{for $1\leq k\leq K-1$ and $m\in \N$},
    \label{eqn: zero at y0}\\
    &\sum_{k=1}^K\eta_{k, m}\equiv 0,
    &&\text{for $m\in \N$},
\label{eqn: eta_k sum to zero}\\
    &\xi_{k, m}=S_{\lambda_k}(S_{\lambda_k}\xi_{k, m}),
    &&\text{for $1\leq k\leq K-1$ and $m\in \N$},
    \label{eqn: double transform}\\
    &\inf_{m\in\N}\left(-\sum_{k=1}^K\int_\Omega\zeta_{k, m}(\omega)\int_E S_{\lambda_k}\xi_{k, m} d\mathfrak{m}^\omega_kd\sigma(\omega)\right)>-\infty,\label{eqn: inf lower bound}\\
    &\zeta_{k, m}\xrightarrow{m\to\infty}\zeta_k\quad \text{in }L^{r'}(\sigma),
    &&\text{for $1\leq k\leq K-1$}.
\label{eqn: zetas weak converge}
\end{align}
Additionally if $p=q$, assume that $\zeta_{k, m}\equiv 1$ for all $1\leq k\leq K$ and $m\in \N$. Then there is a Borel set $\Omega'\subset \Omega$ with $\sigma(\Omega')=1$, and for each $1\leq k\leq K$, subsequences of $(\eta_{k, m})_{m\in \N}$, $(\zeta_{k, m})_{m\in \N}$ (not relabeled), such that there is a Borel function $\eta_k: E\to \R$ whose restriction to $\pi^{-1}(\{\omega\})$ is continuous for all $\omega\in \Omega'$, and writing
\begin{align*}
\eta_{k, M}^{\avg}(v)&\coloneqq\frac{1}{M}\sum_{m=1}^M\eta_{k, m}(v),\qquad
    \zeta^{\avg}_{k, M}(\omega)\coloneqq \frac{1}{M}\sum_{m=1}^{M} \zeta_{k, m}(\omega),
\end{align*}
we have for each $1\leq k\leq K$,
\begin{align}
    &\lim_{M\to\infty} \eta_{k, M}^{\avg}(v)=\eta_k(v),&& \text{for all } v\in \pi^{-1}(\Omega'),\label{eqn: etas converge everywhere}\\
    &\lim_{M\to\infty}\zeta^{\avg}_{k, M}(\omega)= \zeta_k(\omega),&&\text{for all } \omega\in  \Omega',\label{eqn: zeta avg converge}\\
    &\lim_{M\to\infty}\lVert \zeta^{\avg}_{k, M}-\zeta_k\rVert_{L^{r'}(\sigma)}=0,\label{eqn: zeta avg converge in L^r'}
    \end{align}
    and
    \begin{align}\label{eqn: etas sum to zero}
\sum_{k=1}^K  \eta_k&\equiv 0. 
\end{align}
Moreover, the convergence of $\eta_{k, M}^{\avg}$ to $\eta_k$ is uniform on the sets
\[
\overline{B}_{\omega, \ell}
\coloneqq 
\left\{\Xi_j(\omega, y)\mid y\in \overline{B}_\ell(y_0),\ \dist_Y(y, \partial Y)\geq 2\ell^{-1}\right\},
\]
for each $\ell\in \N$ and $\omega\in \Omega'$, where $j\in \N$ is the unique index such that $\omega\in V_j$.
\end{lemma}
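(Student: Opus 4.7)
The plan is to work fiber-by-fiber via the isometries $\Xi_{j,\omega}$: we establish uniform Lipschitz bounds on Ces\`aro averages through Lemma~\ref{lem: avg c_p-convex}, then combine with Arzel\`a--Ascoli and a diagonal argument to obtain pointwise convergence for $\sigma$-a.e.\ $\omega$.

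First I would handle the $\zeta$'s. For $1\leq k\leq K-1$, the strong $L^{r'}(\sigma)$-convergence~\eqref{eqn: zetas weak converge} implies that the Ces\`aro averages $\zeta_{k,M}^{\avg}$ converge strongly to $\zeta_k$, and passing to a subsequence we may take pointwise $\sigma$-a.e.\ convergence; this gives~\eqref{eqn: zeta avg converge} and~\eqref{eqn: zeta avg converge in L^r'}. For $k=K$ in the case $p<q$, the sequence $(\zeta_{K,m})_m$ is bounded in $L^{r'}(\sigma)$ by definition of $\mathcal{Z}_{r', \sigma}$, so weak compactness together with the Banach--Saks theorem yields a further subsequence whose Ces\`aro averages converge strongly in $L^{r'}$ to some $\zeta_K$, and $\sigma$-a.e.\ along a further subsequence. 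The case $p=q$ is trivial since $\zeta_{k,m}\equiv 1$ by hypothesis.

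Second, fix $j\in\N$ and $\omega\in V_j$. By the isometry property of $\Xi_{j,\omega}$ together with~\eqref{eqn: double transform}, for each $1\leq k\leq K-1$ the pullback $\tilde\xi_{k,m}(y)\coloneqq \xi_{k,m}(\Xi_{j,\omega}(y))$ is a $\lambda_k\dist_Y^p$-transform on $Y$ vanishing at $y_0$ (the latter from~\eqref{eqn: zero at y0}). Choosing the weights $\lambda_{\ell,m}\coloneqq \zeta_{k,m}(\omega)$ (for each fixed $\omega$) in Lemma~\ref{lem: avg c_p-convex}, the weighted average $\frac{1}{M_\ell}\sum_m \lambda_{\ell,m}\tilde\xi_{k,m}$ equals the pullback of $\eta_{k,M_\ell}^{\avg}$ to the fiber, so a successful application of that lemma will directly yield uniform Lipschitz bounds on $\eta_{k,M}^{\avg}\circ\Xi_{j,\omega}$ on each set $\{y\in\overline{B}_\ell(y_0)\mid \dist_Y(y,\partial Y)\geq 2\ell^{-1}\}$. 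The $C_1$ condition follows from the pointwise convergence $\zeta_{k,m}(\omega)\to\zeta_k(\omega)$ secured in the first step. The crux is the $C_2$ condition, namely a uniform bound on the weighted averages of $|\tilde\xi_{k,m}|$ over $\overline{B}_R(y_0)$. To obtain it I would use~\eqref{eqn: eta_k sum to zero} to write $\zeta_{K,m}\xi_{K,m}=-\sum_{k<K}\zeta_{k,m}\xi_{k,m}$ and substitute into the $k=K$ summand of~\eqref{eqn: inf lower bound}; then apply the trivial bound $S_{\lambda_k}\xi_{k,m}\geq -\xi_{k,m}$ and the moment estimate~\eqref{eqn: disint moment estimate}; and finally invoke Fubini together with $\zeta_{k,m}(\omega)\to\zeta_k(\omega)>0$ on a $\sigma$-full-measure set to pass from the resulting global weighted $L^1$ bound in $(\omega,v)$ to the required fiber-wise bound for $\sigma$-a.e.\ $\omega$.

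Given the Lipschitz bounds from Lemma~\ref{lem: avg c_p-convex}, Arzel\`a--Ascoli on each set $\{y\in \overline{B}_\ell(y_0)\mid \dist_Y(y,\partial Y)\geq 2\ell^{-1}\}$ yields uniform convergence of a further subsequence. Diagonalizing over $\ell\in\N$, over the countable family $j\in\N$, and over a countable dense subset of $\omega$'s via a measurable selection argument (using separability of $\Omega$ and the measurability of fiber integrals of $\xi_{k,m}$), I extract a single subsequence along which convergence holds on each $\overline{B}_{\omega,\ell}$ for $\omega$ in a Borel full-measure set $\Omega'\subset\Omega$. The limit $\eta_k$ is defined pointwise on $\pi^{-1}(\Omega')$ by the Ces\`aro limit; Borel measurability follows as a pointwise limit of Borel functions, and continuity on each fiber follows from uniform convergence on each $\overline{B}_{\omega,\ell}$. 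For $k=K$ we set $\eta_K\coloneqq -\sum_{k=1}^{K-1}\eta_k$; the identity~\eqref{eqn: etas sum to zero} and the convergence~\eqref{eqn: etas converge everywhere} for $k=K$ follow directly from~\eqref{eqn: eta_k sum to zero}. The main obstacle will be the second step above: disentangling the single global lower bound~\eqref{eqn: inf lower bound}, which mixes all indices and all fibers weighted by $\zeta_{k,m}$, into a clean fiber-wise $L^1$ bound on $|\tilde\xi_{k,m}|$ holding for each $k\leq K-1$ and $\sigma$-a.e.\ $\omega$; this will rely simultaneously on~\eqref{eqn: eta_k sum to zero}, the strict positivity of $\zeta_{k,m}$, and the measurable structure of the fibration.
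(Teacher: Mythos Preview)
Your overall architecture is close to the paper's, but there is a genuine gap in the ``crux'' step, and it is exactly the point you yourself flag as the main obstacle.

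\textbf{The gap.} You want to pass from the single global bound~\eqref{eqn: inf lower bound} (an integral in $\omega$) to the fiber-wise hypothesis $C_2$ of Lemma~\ref{lem: avg c_p-convex}, i.e.\ to a bound on
\[
\sup_{t\in\overline{B}_R(y_0)}\frac{1}{M}\sum_{m=1}^{M}\zeta_{k,m}(\omega)\bigl|\tilde\xi_{k,m}(t)\bigr|
\]
that is uniform in $M$ for $\sigma$-a.e.\ $\omega$. But a bound on $\int_\Omega F_m(\omega)\,d\sigma$ uniform in $m$ does \emph{not} yield a bound on $F_m(\omega)$ uniform in $m$ for a.e.\ $\omega$; nor does it yield such a bound for Ces\`aro averages. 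Your proposed route (Fubini plus ``$\zeta_k(\omega)>0$'') cannot close this: there is no reason $\zeta_k(\omega)>0$ a.e.\ (the paper later handles $\{\zeta_k=0\}$ separately), and even where it is positive the implication still fails. Similarly, ``diagonalizing over a countable dense subset of $\omega$'s via a measurable selection argument'' does not produce a \emph{single} subsequence that works for $\sigma$-a.e.\ $\omega$; the subsequence you would extract at each $\omega$ depends on $\omega$.

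\textbf{What the paper does instead.} The paper first upgrades~\eqref{eqn: inf lower bound} to the duality-type bound~\eqref{eqn: max seq uniform bound} valid against \emph{every} $\mathfrak{n}\in\mathcal{P}^\sigma_{p,q}(E)$, then tests it against measures $\mathfrak{n}^\omega_{\delta,k,\ell,m}$ concentrated near the fiber-wise supremum (resp.\ infimum) of $\eta_{k,m}$ on $\overline{B}_\ell(y_0)$; this yields the $L^1(\sigma)$ bound~\eqref{eqn: L infinity bound} on $\omega\mapsto\sup_{t\in\overline{B}_\ell(y_0)}|\eta_{k,m}(\Xi_{j,\omega}(t))|$. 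To turn this into something usable at a.e.\ fixed $\omega$, the paper invokes Koml\'os-type theorems: Guessous's vector-valued version (applied in $L^1(\sigma;L^2(\overline{B}_\ell(y_0)))$) gives a single subsequence whose Ces\`aro averages converge in $L^2$ on the fiber for $\sigma$-a.e.\ $\omega$, and the scalar Koml\'os theorem applied to the sup-norms and to $\zeta_{k,m}$ gives a.e.\ convergence of their Ces\`aro averages. Only \emph{after} these extractions does the paper feed the (now $\omega$-wise bounded) data into Lemma~\ref{lem: avg c_p-convex} and conclude via Arzel\`a--Ascoli. Your proposal skips both the test-measure construction and the Koml\'os step; without them the $C_2$ hypothesis is not available, and the argument does not go through.
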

%%%%%%%%%%%%%%%%%%%%
\begin{proof}
For any $1\leq k\leq K$, $m\in \N$, and fixed $u\in E$, from~\eqref{eqn: zero at y0} we have
\begin{align}
\begin{split}
    -S_{\lambda_k}\xi_{k, m}(u)
    &=\inf_{v\in\pi^{-1}(\{\pi(u)\})}(\lambda_k \dist_E(u, v)^p+\xi_{k, m}(v))\\
    &\leq \sum_{j\in \N}\chi_j(\pi(u))\mathds{1}_{V_j}(\pi(u))\left(\lambda_k \dist_E(u, \Xi_{j, \pi(u)}(y_0))^p+\xi_{k, m}(\Xi_{j, \pi(u)}(y_0))\right)\\&\leq\lambda_k \dist_{E,y_0}^p(\pi(u),u),\label{eqn: transform y0 bound}
\end{split}
\end{align}
thus for any $m\in \N$ and $1\leq k\leq K$, using~\eqref{eqn: disint moment estimate},
\begin{align*}
    \sum_{i\neq k}\left(-\int_\Omega\zeta_{i, m}\int_E S_{\lambda_i}\xi_{i, m}d\mathfrak{m}^\bullet_i d\sigma\right)
    &\leq 
    \sum_{i\neq k}\lambda_i\left(\int_\Omega\zeta_{i, m}\int_E  \dist^p_{E, y_0}(\cdot, u) d\mathfrak{m}^\bullet_i(u) d\sigma\right)\\
    &\leq 
    \sum_{i\neq k}\lambda_i\left\| \zeta_{i, m}\right\|_{L^{r'}(\sigma)}\left\Vert 2^{p-1}(\widetilde{C}+\mk_p^E(\delta^\bullet_{E,y_0}, \mathfrak{m}^\bullet_i)^p\right\|_{L^r(\sigma)}\\
    &\leq 
    2^{p-1}\sum_{i\neq k}\lambda_i\left(\widetilde{C}+\scrmk_p(\delta^\bullet_{E,y_0}\otimes \sigma, \mathfrak{m}_i)^p\right).
\end{align*}
Hence for any $m\in \N$,
\begin{align}
\begin{split}
    &-\int_\Omega\zeta_{k, m}\int_E S_{\lambda_k}\xi_{k, m}d\mathfrak{m}^\bullet d\sigma\\
    &\geq -\sum_{k'=1}^K \int_\Omega\zeta_{k', m}\int_E S_{\lambda_{k'}}\xi_{k', m}d\mathfrak{m}^\bullet d\sigma
    -\sum_{i\neq k}\left(-\int_\Omega\zeta_{i, m}\int_E S_{\lambda_i}\xi_{i, m}d\mathfrak{m}^\bullet_i d\sigma\right)\\
    &\geq -\sum_{k'=1}^K \int_\Omega\zeta_{k', m}\int_E S_{\lambda_{k'}}\xi_{k', m}d\mathfrak{m}^\bullet d\sigma- 2^{p-1}\sum_{i\neq k}\lambda_i\left(\widetilde{C}+\scrmk_p(\delta^\bullet_{E,y_0}\otimes \sigma, \mathfrak{m}_i)^p\right)\\
    &\eqqcolon C,
\label{eqn: single term lower bound}
\end{split}
\end{align}
here $C$ is not $-\infty$ and independent of $m\in \N$ by~\eqref{eqn: inf lower bound}. 

Now for a fixed~$\omega\in \Omega$ and $u$, $v\in \pi^{-1}(\{\omega\})$ we can integrate the inequality 
\begin{align}
\begin{split}\label{etasd}
\eta_{k, m}(v)
&\geq -\zeta_{k, m}(\omega)S_{\lambda_k}\xi_{k, m}(u)-\lambda_k\zeta_{k, m}(\omega)\dist_E(u, v)^p\\
&\geq  -\zeta_{k, m}(\omega)S_{\lambda_k}\xi_{k, m}(u)-2^{p-1}\zeta_{k, m}(\omega)(\dist_{E,y_0}^p(\omega,u)+\dist_{E,y_0}^p(\omega,v))
\end{split}
\end{align} 
with respect to $ \mathfrak{m}_k^\omega\otimes \mathfrak{n}^\omega(u, v)$ for any $ \mathfrak{n}\in\mathcal{P}^\sigma_{p, q}(E)$, then integrate against $\sigma$ with respect to~$\omega$, and using that each~$\mathfrak{n}^\omega$ is nonnegative and has total mass one along with~\eqref{eqn: disint moment estimate} and~\eqref{eqn: single term lower bound}, we thus obtain 
\begin{align*}
\int_E\eta_{k, m}d\mathfrak{n}
&\geq -\int_\Omega\zeta_{k, m}\int_ES_{\lambda_k}\xi_{k, m} d\mathfrak{m}^\bullet_kd\sigma\\
    &\quad -2^{p-1}\left(\int_\Omega\zeta_{k, m}\left[\int_E \dist^p_{E,y_0}(\cdot,u)d\mathfrak{m}_k^\bullet(u) d\sigma+\int_E \dist^p_{E,y_0}(\cdot,v)d\mathfrak{n}^\bullet(v)\right]d\sigma\right)\\
    &\geq C
    -2^{2p-2}\left\| \zeta_{k, m}\right\|_{L^{r'}(\sigma)}\left\|2\widetilde{C}+\mk_p^E(\delta^\bullet_{E,y_0}, \mathfrak{m}^\bullet_k)^p+\mk_p^E(\delta^\bullet_{E,y_0}, \mathfrak{n}^\bullet)^p\right\|_{L^r(\sigma)}\\
    &\geq C
    -2^{2p-2}\left(2\widetilde{C}+\max_{1\leq i \leq K} \scrmk_{p,q}(\delta^\bullet_{E,y_0}\otimes \sigma, \mathfrak{m}_i)^p+\scrmk_{p,q}(\delta^\bullet_{E,y_0}\otimes \sigma, \mathfrak{n})^p
    \right).
\end{align*}
Combining with~\eqref{eqn: eta_k sum to zero},  
there exists a constant $C'>0$ independent of $m\in \N$, $1\leq k\leq K$, and $\mathfrak{n}\in \mathcal{P}^\sigma_{p, q}(E)$ such that 
\begin{align}
\label{eqn: max seq uniform bound}
\left|\int_E\eta_{k, m}d\mathfrak{n}\right|
\leq C'(1+\scrmk_{p,q}(\delta^\bullet_{E,y_0}\otimes \sigma, \mathfrak{n})^p).
\end{align}
Now define for $\delta>0$, $1\leq k\leq K$, $j$, $m\in \N$, and $\omega\in \Omega$,
\begin{align*}
    I^{\delta, \omega}_{k,  \ell, m}\coloneqq \left \{t\in \overline{B}_\ell(y_0)\Biggm|
    \sum_{j\in \N}\mathds{1}_{V_j}(\omega)\eta_{k, m}(\Xi_{j, \omega}(t))
    \geq \sup_{t'\in \overline{B}_\ell(y_0)}\sum_{j\in \N}\mathds{1}_{V_j}(\omega)\eta_{k, m}(\Xi_{j, \omega}(t'))-\delta
        \right\}.
\end{align*}
Since 
\[
t\mapsto \sum_{j\in \N}\mathds{1}_{V_j}(\omega)\eta_{k, m}(\Xi_{j, \omega}(t))
\]
is continuous on $Y$ for any fixed $\omega$, we must have $\Vol_Y(I^{\delta, \omega}_{k,  \ell, m})>0$, so we can define
\begin{align*}
    \mu^\omega_{\delta, k, \ell, m}&\coloneqq\frac{\mathds{1}_{I^{\delta, \omega}_{k,  \ell, m}}}{\Vol_Y(I^{\delta, \omega}_{k,  \ell, m})}\Vol_Y\in \mathcal{P}(Y).
\end{align*}
By the continuity of each $\Xi_j$ and $\eta_{k, m}$, we can see the set 
\[
\left\{(\omega, t)\in \Omega\times Y\mid t\in I^{\delta, \omega}_{k,  \ell, m}\right\}
\]
is a Borel subset of $\Omega\times Y$. Thus the function 
\begin{align*}
(\omega, t)\mapsto \mathds{1}_{I^{\delta, \omega}_{k,  \ell, m}}(t)
\end{align*}
is Borel on $\Omega\times Y$, and by Tonelli's theorem the function $\omega\mapsto \Vol_Y(I^{\delta, \omega}_{k,  \ell, m})$ is Borel on $\Omega$. 
Now fix any Borel  $A\subset E$, then as a composition of a Borel function $\mathds{1}_A$
with a continuous map $\Xi_j$, the function $(\omega, t)\mapsto\mathds{1}_A(\Xi_{j, \omega}(t))$ is Borel on $U_j\times Y$ (endowed with the subspace metric), then the function 
\begin{align*}
(\omega, t)\mapsto\sum_{j\in\N}\mathds{1}_{V_j}(\omega)\cdot\mathds{1}_{I^{\delta, \omega}_{k,  \ell, m}}(t)\cdot\mathds{1}_A(\Xi_{j, \omega}(t))
\end{align*}
is Borel on $\Omega\times Y$. 
Thus combining the above, if we define
\begin{align*}
 \mathfrak{n}^\omega_{\delta, k, \ell, m}\coloneqq \sum_{j\in \N}\mathds{1}_{V_j}(\omega)(\Xi_{j, \omega})_\sharp  \mu^\omega_{\delta, k, \ell, m}
\end{align*}
again by Tonelli's theorem we see $\omega \mapsto \mathfrak{n}^\omega_{\delta, k, \ell, m}(A)$ is Borel on $\Omega$ for any Borel $A\subset E$, hence  $\mathfrak{n}_{\delta, k, j, \ell, m}\coloneqq \mathfrak{n}_{\delta, k, j, \ell, m}^\bullet \otimes \sigma$ is well-defined and belongs to~$\mathcal{P}^\sigma(E)$ by~\cite{KuratowskiRyll-Nardzewski65}*{Corollary 1}. Also if $\omega\in V_{j_0}$ for some $j_0\in \N$,
\begin{align}
\begin{split}
&\mk^E_p(\delta^\omega_{E,y_0}, \mathfrak{n}^\omega_{\delta, k, \ell, m})^p\\
&\leq\sum_{j'\in \N}\chi_{j'}(\omega)\mk_p^E((\Xi_{j', \omega})_\sharp\delta^Y_{y_0},(\Xi_{j_0, \omega})_\sharp \mu^\omega_{\delta, k, \ell, m})^p\\
&\leq 2^{p-1}\sum_{j'\in \N}\chi_{j'}(\omega)\Big(\mk_p^E((\Xi_{j', \omega})_\sharp\delta^Y_{y_0}, (\Xi_{j_0, \omega})_\sharp\delta^Y_{y_0})^p+\mk_p^E((\Xi_{j_0, \omega})_\sharp\delta^Y_{y_0}, (\Xi_{j_0, \omega})_\sharp \mu^\omega_{\delta, k, \ell, m})^p\Big)\\
%%%
&= 2^{p-1}\sum_{j'\in \N}\chi_{j'}(\omega)\Bigg(\dist_Y(y_0, g^{j'}_{j_0}(\omega)y_0)^p
+\Vol_Y(I^{\delta, \omega}_{k,  \ell, m})^{-1}\int_{I^{\delta, \omega}_{m, k, \ell}}d_{y_0}(t)^pd\Vol_Y(t)\Bigg)\\
&\leq 2^{p-1}\left(\sum_{j'\in \N}\chi_{j'}(\omega)\dist_Y(y_0, g^{j'}_{j_0}(\omega)y_0)^p+\ell^p\right),\label{eqn: n delta moment bound}
\end{split}
\end{align}
which is bounded independent of $\omega$ and $j_0$ by~\eqref{eqn: bounded orbits}, hence $\mathfrak{n}_{\delta, k, \ell, m}\in \mathcal{P}_{p, q}^\sigma(E)$.
 Then we find
\begin{align*}
&\int_\Omega \sup_{t\in\overline{B}_\ell(y_0)}\left(\sum_{j\in \N}\mathds{1}_{V_j}(\omega)\eta_{k, m}(\Xi_{j, \omega}(t))\right)d\sigma(\omega)-\delta\\
&\leq \int_\Omega\frac{1}{\Vol_Y(I^{\delta, \omega}_{k,  \ell, m})}\int_{I^{\delta, \omega}_{k,  \ell, m}}\sum_{j\in \N}\mathds{1}_{V_j}(\omega)\eta_{k, m}(\Xi_{j, \omega}(t))d\Vol_Y(t)  d\sigma(\omega)
=\int_\Omega\int_E \eta_{k, m}d\mathfrak{n}^\omega_{\delta, k, \ell, m}d\sigma(\omega)\\
%%%
&\leq C_\ell
\end{align*}
for some $C_\ell>0$ independent of $k$, $m$, and $\delta$ by~\eqref{eqn: max seq uniform bound} and~\eqref{eqn: n delta moment bound}.
We may replace $\max$ with $\min$ and change the direction of the inequality in the definition of $I^{\delta, \omega}_{k, m, \ell}$, then replace $\sup$ with $\inf$ above 
 to obtain the analogous inequality 
\begin{align*}
\int_\Omega\inf_{t\in\overline{B}_\ell(y_0)}\left(\sum_{j\in \N}\mathds{1}_{V_j}(\omega)\eta_{k, m}(\Xi_{j, \omega}(t))\right)d\sigma(\omega)+\delta
\geq
-C_\ell.
\end{align*}
For a fixed $\omega\in \Omega$, if $j_0\in \N$ is the unique index for which $\omega\in V_{j_0}$, using~\eqref{eqn: zero at y0}
\begin{align*}
    0=\sum_{j\in \N}\chi_j(\omega)\mathds{1}_{V_j}(\omega)\xi_{k, m}(\Xi_j(\omega, y_0))=\chi_{j_0}(\omega)\xi_{k, m}(\Xi_{j_0}(\omega, y_0)),
\end{align*}
since $\chi_{j_0}(\omega)>0$ by construction of $\{V_j\}_{j\in \N}$, we must have
\begin{align}\label{eqn: partition version zero at y0}
    \sum_{j\in \N}\mathds{1}_{V_j}(\omega)\xi_{k, m}(\Xi_j(\omega, y_0))
    &=\xi_{k, m}(\Xi_{j_0}(\omega, y_0))=0.
\end{align}
In particular,
\begin{align*}
    \inf_{t\in\overline{B}_\ell(y_0)}\left(\sum_{j\in \N}\mathds{1}_{V_j}(\omega)\eta_{k, m}(\Xi_{j, \omega}(t))\right)
    \leq 0\leq \sup_{t\in\overline{B}_\ell(y_0)}\left(\sum_{j\in \N}\mathds{1}_{V_j}(\omega)\eta_{k, m}(\Xi_{j, \omega}(t))\right)
\end{align*}
for any $\omega\in \Omega$, and $m$, $\ell\in \N$. 
Thus taking $\delta$ to $0$ in the two resulting inequalities above and using H\"older's inequality yields
\begin{align}
\begin{split}
   & \int_\Omega\left\| \sum_{j\in \N}\mathds{1}_{V_j}(\omega)(\eta_{k, m}\circ\Xi_{j, \omega})\right\|_{L^2(\overline{B}_\ell(y_0))}d\sigma(\omega)\\
    &\leq  \Vol_Y(\overline{B}_\ell(y_0))^{\frac{1}{2}}\int_\Omega\sup_{t\in \overline{B}_\ell(y_0)}\left| \sum_{j\in \N}\mathds{1}_{V_j}(\omega)\eta_{k, m}(\Xi_{j, \omega}(t))\right| d\sigma(\omega)\\
&\leq C_\ell\Vol_Y(\overline{B}_\ell(y_0))^{\frac{1}{2}},\label{eqn: L infinity bound}
\end{split}
\end{align}
where the reference measure on $L^2(\overline{B}_\ell(y_0))$ is $\Vol_Y$. 
This implies that for each $\ell\in \N$ and $1\leq k\leq K$, the sequence 
\begin{align}\label{eqn: bounded seq}
    \left(\omega\mapsto \sum_{j\in \N}\mathds{1}_{V_j}(\omega)(\eta_{k, m}\circ\Xi_{j, \omega})\right)_{m\in \N}
\end{align}
is bounded in the Bochner--Lebesgue space $L^1(\sigma; L^2(\overline{B}_\ell(y_0)))$.
As the space $L^2(\overline{B}_\ell(y_0))$ is a Hilbert space, we may repeatedly apply~\cite{Guessous97}*{Theorem 3.1} 
along with a diagonalization argument to obtain a subsequence of~\eqref{eqn: bounded seq} (which we do not relabel) 
with the property that: there exists a function 
\[
\tilde{\eta}_k: \Omega\times Y\to \R
\quad\text{with}\quad \omega\mapsto\tilde{\eta}_k(\omega, \cdot)\vert_{\overline{B}_\ell(y_0)}\in L^1(\sigma; L^2(\overline{B}_\ell(y_0)))
\]
for each $\ell\in \N$, and for any further (not relabeled) subsequence there is a $\sigma$-null Borel set $\mathcal{N}_1\subset \Omega$ such that for all $\ell\in \N$ and $\omega\in \Omega\setminus \mathcal{N}_1$,
\begin{align}\label{eqn: L^2 conv}
    \lim_{M\to \infty}\left\| \tilde{\eta}^{\avg}_{k, M}(\omega, \cdot)-\tilde{\eta}_k(\omega, \cdot)\right\|_{L^2(\overline{B}_\ell(y_0))}= 0,
\quad\text{where }\ 
\tilde{\eta}^{\avg}_{k, M}(\omega, t)\coloneqq \frac{1}{M}\sum_{m=1}^M\sum_{j\in \N}\mathds{1}_{V_j}(\omega)\eta_{k, m}(\Xi_{j, \omega}(t)).
\end{align}
By~\eqref{eqn: L infinity bound} and since 
\[
 \sup_{m\in \N}\left\| \zeta_{k, m}\right\|_{L^1(\sigma)}\leq \sup_{m\in \N}\left\| \zeta_{k, m}\right\|_{L^{r'}(\sigma)}\leq 1
\]
we can also apply the real valued Koml{\'o}s' theorem (see \cite{Komlos67}*{Theorem 1a}) for each $1\leq k\leq K$ and $\ell\in \N$ to the sequences 
\begin{align*}
\left(\omega\mapsto\sup_{t'\in\overline{B}_\ell(y_0)}\left| \sum_{j\in \N}\mathds{1}_{V_j}(\omega)\eta_{k, m}(\Xi_{j, \omega}(t'))\right|\right)_{m\in \N}    
\end{align*}
and $(\zeta_{k,m})_{m\in \N}$,
and make yet another diagonalization argument to assume there exists a $\sigma$-null Borel set $\mathcal{N}_2$ such that for all $\ell\in \N$, $1\leq k \leq K$, and $\omega\in \mathcal{N}_2$,
\begin{align*}
&   \lim_{M\to\infty}\frac{1}{M}\sum_{m=1}^M\sup_{t'\in\overline{B}_\ell(y_0)}\left| \sum_{j\in \N}\mathds{1}_{V_j}(\omega)\eta_{k, m}(\Xi_{j, \omega}(t'))\right| \text{ converges},
\end{align*}
and \eqref{eqn: zeta avg converge} holds. If $p<q$, by the Banach--Saks theorem we may pass to another subsequence of $(\zeta_{k,m})_{m\in \N}$ to assume that $\zeta_{k, M}^{\avg}$ also converges in $L^{r'}(\sigma)$, necessarily to $\zeta_k$ by~\eqref{eqn: zetas weak converge}, while if $p=q$ we already have $\zeta_{k, M}^{\avg}\equiv 1$ for all $M$, proving~\eqref{eqn: zeta avg converge in L^r'}. 
 
 Now fix an arbitrary increasing sequence $(M_{\ell'})_{\ell'\in \mathbb{N}}\subset \N$ and 
 \[
 \omega\in \Omega'\coloneqq \Omega\setminus(\mathcal{N}_1\cup \mathcal{N}_2),
 \]
 where $\Omega'$ is Borel.
 By \eqref{eqn: L^2 conv} we may pass to yet another subsequence to assume for some $\Vol_Y$-null set $\mathcal{N}(\omega)\subset Y$,
\begin{align*}
    \lim_{\ell'\to\infty}\tilde{\eta}_{k, M_{\ell'}}^{\avg}(\omega, t)= \tilde{\eta}_k(\omega, t),
    \quad\text{for all }t\in Y\setminus\mathcal{N}(\omega).
\end{align*}
If $j_0$ is the unique index such that $\omega\in V_{j_0}$ and we define the set
\begin{align*}
 \overline{B}_\ell\coloneqq \{y\in \overline{B}_\ell(y_0)\mid \dist_Y(y, \partial Y)\geq 2\ell^{-1}\},
\end{align*}
for $\ell\in \N$, by~\eqref{eqn: double transform} we can then apply Lemma~\ref{lem: avg c_p-convex} with $f_m=\xi_{k, m}(\Xi_{j_0}(\omega, \cdot))$ 
and $\lambda_{\ell, m'}=\zeta_{k, m}(\omega)$ independent of $\ell'\in\mathbb{N}$
(since the sequence $(\zeta^{\avg}_{k, M}(\omega))_{M\in \N}$ converges, it is also uniformly bounded)
for $1\leq k\leq K-1$ to obtain that $(\tilde{\eta}_{k, M_{\ell'}}^{\avg}(\omega, \cdot))_{\ell'\in \N}$ is uniformly Lipschitz 
on $\overline{B}_\ell$ for each $\ell\in \N$.
By~\eqref{eqn: partition version zero at y0} we see $\tilde{\eta}_{k, M_{\ell'}}^{\avg}(\omega, y_0)=0$ for all $k$, thus $(\tilde{\eta}_{k, M_{\ell'}}^{\avg}(\omega, \cdot))_{\ell'\in \N}$ is also bounded on~$\overline{B}_\ell$ and we may apply the Arzel{\`a}--Ascoli theorem to obtain a subsequence of $\tilde{\eta}_{k, M_{\ell'}}^{\avg}(\omega, \cdot)$ that converges uniformly on $\overline{B}_\ell$, necessarily to $\tilde{\eta}_k(\omega, \cdot)$.
By another diagonalization argument, this implies there is a continuous extension of $\tilde{\eta}_k(\omega, \cdot)$ to all of $Y$ for each $\omega\in \Omega'$; we continue to denote this extension by $\tilde{\eta}_k(\omega, \cdot)$. Since we had started with an \emph{arbitrary} increasing sequence $(M_{\ell'})_{\ell'\in \N}$, we conclude that (for the full original sequence) $\tilde{\eta}^{\avg}_{k, M}(\omega, t)$ converges to $\tilde{\eta}_k(\omega, t)$ as $M\to\infty$
 for any fixed $\omega\in \Omega'$, and this convergence is uniform in $t$ when restricted to~$\overline{B}_\ell$ for any $\ell\in \N$.  
By~\eqref{eqn: eta_k sum to zero} we have 
\[
\sum_{k=1}^K \tilde{\eta}_{k, M}^{\avg}\equiv 0,
\] 
hence we see the same uniform convergence claim holds for $(\tilde{\eta}_{K, M}^{\avg}(\omega, \cdot))_{M\in \N}$ as well. Finally by disjointness of the $V_j$,
\begin{align*}
\sum_{j\in \N}\mathds{1}_{V_j}(\pi(v))\tilde{\eta}^{\avg}_{k, M}(\Xi_j^{-1}(v))
&=\sum_{j\in \N}\mathds{1}_{V_j}(\pi(v))\left( \frac{1}{M}\sum_{m=1}^M\sum_{j'\in \N}\mathds{1}_{V_{j'}}(\pi(v))\eta_{k, m}(\Xi_{j'}(\Xi_j^{-1}(v)))\right)\\
&= \frac{1}{M}\sum_{m=1}^M\sum_{j\in \N}\mathds{1}_{V_j}(\pi(v))\eta_{k, m}(v)
=\frac{1}{M}\sum_{m=1}^M\eta_{k, m}(v)=\eta_{k, M}^{\avg}(v),
\end{align*}
hence defining 
\[
\eta_k(v)\coloneqq \mathds{1}_{\pi^{-1}(\Omega')}(v)\cdot \sum_{j\in \N}\mathds{1}_{V_j}(\pi(v))\tilde{\eta}_k(\Xi_j^{-1}(v))
\]
we see $\eta_k$ is Borel, satisfies~\eqref{eqn: etas converge everywhere}, and the uniform convergence claim for $(\eta_{k, M}^{\avg})_{M\in \N}$ holds. Finally, this uniform convergence implies $\eta_k$ is continuous when restricted to $\pi^{-1}(\{\omega\})$ for any $\omega\in \Omega'$.
\end{proof}
%%%%%%%%%%%CLAIM 2
For a final lemma, we prove measurability properties of certain functions constructed from the limiting functions obtained by Lemma~\ref{lem: subsequence of maximizing sequence}.
\begin{lemma}\label{lem: measurability of limits}
Under the same hypotheses and notation as Lemma~\ref{lem: subsequence of maximizing sequence}, for $1\leq k\leq K$, define (with the convention $0/0=0$)
\begin{align*}
\Omega_k\coloneqq \{\omega\in \Omega'\mid \zeta_k(\omega)\neq 0\},\qquad
    \xi_k(v)\coloneqq \frac{\eta_k(v)}{\zeta_k(\pi(v))}\mathds{1}_{\Omega_k}(\pi(v))\quad \text{for } v\in E.
\end{align*}
Then for any $\varepsilon\in (0, \sigma(\Omega_k))$ there exists a Borel set $\Omega_{k, \varepsilon}\subset \Omega\setminus \Omega_k$ with $\sigma(\Omega_{k, \varepsilon})<\varepsilon$ such that $\zeta_{k, M}^{\avg}$ converges uniformly to zero on $\Omega\setminus (\Omega_k\cup \Omega_{k, \varepsilon})$, and for any $\mathfrak{n}\in \mathcal{P}^\sigma_{p, q}(E)$, the functions defined on $\Omega$ by
\begin{align}
    \omega&\mapsto 
         -\mathds{1}_{\Omega'}(\omega)\int_E \eta_kd\mathfrak{n}^\omega,
    \label{eqn: potential measurable}\\
    \omega&\mapsto 
    \left[-\zeta_k(\omega)\mathds{1}_{\Omega_k}(\omega)\int_E S_{\lambda_k}\xi_kd\mathfrak{m}^\omega_k+\mathds{1}_{\Omega\setminus(\Omega_k\cup \Omega_{k, \varepsilon})}(\omega)\inf_{\pi^{-1}(\{\omega\})}\eta_k\right]\mathds{1}_{\Omega'}(\omega)
        \label{eqn: transform measurable}
\end{align}
are $\mathcal{B}_\sigma$-measurable.
\end{lemma}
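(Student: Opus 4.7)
The plan is to prove the three claims separately, with the key preliminary being that the function $\tilde{\eta}_k\colon \Omega'\times Y\to \R$ defined in Lemma~\ref{lem: subsequence of maximizing sequence} is jointly Borel. This holds because each $\tilde{\eta}^{\avg}_{k,M}$ is jointly Borel on $\Omega'\times Y$ (a finite sum of products of the Borel indicators $\mathds{1}_{V_j}$ with the functions $(\omega,t)\mapsto \eta_{k,m}(\Xi_j(\omega,t))$, each continuous on $U_j\times Y$), and $\tilde{\eta}_k$ is its pointwise limit by~\eqref{eqn: etas converge everywhere}. For the uniform convergence claim, I would invoke Egorov's theorem: by~\eqref{eqn: zeta avg converge}, $\zeta^{\avg}_{k,M}\to \zeta_k$ pointwise on $\Omega'$, hence $\zeta^{\avg}_{k,M}\to 0$ $\sigma$-a.e.\ on $\Omega\setminus \Omega_k$, and since $\sigma$ is a probability measure Egorov's theorem produces a Borel set $\Omega_{k,\varepsilon}\subset \Omega\setminus \Omega_k$ of $\sigma$-measure less than $\varepsilon$ off of which the convergence is uniform.

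For the measurability of~\eqref{eqn: potential measurable}, I would use that $\eta_k$ is Borel on $E$ together with the Disintegration Theorem applied to $\mathfrak{n}$: the map $\omega\mapsto \int_E f d\mathfrak{n}^\omega$ is Borel in $\omega$ for any bounded Borel $f$ by the definition of a disintegration, and the extension to the possibly unbounded Borel function $\eta_k$ as an extended-real-valued object follows from truncating $(\eta_k\wedge N)\vee(-N)$, passing to the limit via monotone convergence on the positive and negative parts, and noting that the subset of $\omega$ where both $\int_E \eta_k^\pm d\mathfrak{n}^\omega$ are infinite is itself Borel. For~\eqref{eqn: transform measurable}, I would treat the two additive pieces separately. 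For the first piece, on the trivialization $V_j\cap \Omega_k$, the isometry property of $\Xi_{j,\omega}$ gives
\begin{align*}
S_{\lambda_k}\xi_k(\Xi_j(\omega,y))
=\sup_{y'\in Y}\left(-\lambda_k\dist_Y(y,y')^p-\zeta_k(\omega)^{-1}\tilde{\eta}_k(\omega,y')\right),
\end{align*}
and since $\tilde{\eta}_k(\omega,\cdot)$ is continuous on $Y$ and $Y$ is separable, this supremum coincides with one taken over a fixed countable dense subset of $Y$, which is a countable supremum of jointly Borel functions of $(\omega,y)$. Measurability of $\int_E S_{\lambda_k}\xi_k d\mathfrak{m}_k^\omega$ then follows as in the treatment of~\eqref{eqn: potential measurable}, and multiplication by the Borel factor $-\zeta_k\mathds{1}_{\Omega_k}$ preserves it. For the second piece, $\inf_{\pi^{-1}(\{\omega\})}\eta_k=\inf_{y\in Y}\tilde{\eta}_k(\omega,y)$ on each $V_j\cap \Omega'$, and the same countable-dense-subset argument yields Borel measurability of this infimum as an extended-real-valued function of $\omega$. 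Piecing the two terms together over the disjoint Borel partition $\{V_j\}_{j\in \N}$ gives the required $\mathcal{B}_\sigma$-measurability.

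The main obstacle will be carefully accommodating the potential non-integrability of $\eta_k$ against $\mathfrak{n}^\omega$ and the unboundedness of $\tilde{\eta}_k$ on non-compact fibers, which forces one to interpret both the integral in~\eqref{eqn: potential measurable} and the fibrewise infimum in~\eqref{eqn: transform measurable} as extended-real-valued on Borel subsets of $\Omega$, and to verify that the exceptional Borel sets where these objects fail to take finite values are negligible for the subsequent use of these measurable functions.
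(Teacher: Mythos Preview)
Your approach is essentially the same as the paper's: Egorov's theorem for the uniform convergence claim, Borel-ness of $\eta_k$ together with the Disintegration Theorem for~\eqref{eqn: potential measurable}, and the countable-dense-subset reduction (via separability of $Y$ and fiberwise continuity) for both $S_{\lambda_k}\xi_k$ and the infimum term in~\eqref{eqn: transform measurable}. The one place where the paper does substantive work you defer is exactly the obstacle you flag at the end: to make $\omega\mapsto\int_E\eta_k\,d\mathfrak{n}^\omega$ well-defined (not $\infty-\infty$) $\sigma$-a.e., the paper establishes $\int_E\eta_k\,d\mathfrak{n}^\omega>-\infty$ for $\sigma$-a.e.\ $\omega$ by passing the pointwise lower bound~\eqref{etasd} to the limit and through two applications of Fatou's lemma, with finiteness of the resulting $\sigma$-integral coming from~\eqref{eqn: single term lower bound}; similarly $-\int_E S_{\lambda_k}\xi_k\,d\mathfrak{m}_k^\omega<\infty$ for $\sigma$-a.e.\ $\omega$ is obtained from the analogue of~\eqref{eqn: transform y0 bound} for $\xi_k$ combined with~\eqref{eqn: disint moment estimate}.
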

%%%%%%%%%%%%%%%%%%
\begin{proof}
Fix $1\leq k\leq K$. For any $\varepsilon>0$, by Egorov's theorem there is a Borel set $\Omega_{k, \varepsilon}\subset \Omega\setminus \Omega_k$ with $\sigma(\Omega_{k, \varepsilon})<\varepsilon$ such that $\zeta_{k, M}^{\avg}$ converges uniformly to zero on $\Omega\setminus (\Omega_k\cup \Omega_{k, \varepsilon})$. 

We begin with the measurability of~\eqref{eqn: potential measurable}. Since $\eta_k$ is Borel, hence by~\nameref{thm: disintegration} the integral of its positive and negative parts respectively against $\mathfrak{n}^\omega$ are Borel in $\omega$. Thus to obtain measurability of~\eqref{eqn: potential measurable}, it is sufficient to show the integral is finite from below for $\sigma$-a.e. $\omega\in\Omega$. 
To this end, for each $1\leq k\leq K$, $\omega\in \Omega'$, and $u$, $v\in \pi^{-1}(\{\omega\})$, calculating as in~\eqref{etasd} we must have
\begin{align*}
\eta_k(v)
&=\lim_{M\to\infty}\frac1M\sum_{m=1}^M\eta_{k, m}(v)\\
&\geq 
\limsup_{M\to\infty}
\frac1M\sum_{m=1}^M
\Big[-\zeta_{k, m}(\omega)S_{\lambda_k}\xi_{k, m}(u)-2^{p-1}\zeta_{k, m}(\omega)
\left(\dist_{E,y_0}^p(\omega,u)+\dist_{E,y_0}^p(\omega,v)\right)
\Big]\\
&\geq 
\limsup_{M\to\infty}
\left(-\frac{1}{M}\sum_{m=1}^M\zeta_{k, m}(\omega)S_{\lambda_k}\xi_{k, m}(u)\right)
-2^{p-1}\zeta_{k}(\omega)
\left(\dist_{E,y_0}^p(\omega,u)+\dist_{E,y_0}^p(\omega,v)\right).
\end{align*}
As $\mathfrak{m}_k^\omega$ and $\mathfrak{n}^\omega$ are supported on $\pi^{-1}(\{\omega\})$, integrating against $(\mathfrak{m}_k^\omega\otimes \mathfrak{n}^\omega)(u,v)$ and using~\eqref{eqn: disint moment estimate} yields
\begin{align}
\begin{split}
\int_E\eta_kd\mathfrak{n}^\omega
%%%%%
&\geq \int_E\limsup_{M\to \infty}
\left( -\frac{1}{M}\sum_{m=1}^M \zeta_{k, m}(\omega)S_{\lambda_k}\xi_{k, m}\right)d\mathfrak{m}_k^\omega\\
&\quad-2^{2p-2}\zeta_k(\omega)\left(2\widetilde{C}+\mk_p^E(\delta^\omega_{E,y_0}, \mathfrak{m}_k^\omega)^p+\mk_p^E(\delta^\omega_{E,y_0}, \mathfrak{n}^\omega)^p\right)\\
%%%%%
&\geq \limsup_{M\to \infty}\int_E\left(-\frac{1}{M}\sum_{m=1}^M \zeta_{k, m}(\omega)S_{\lambda_k}\xi_{k, m}\right)d\mathfrak{m}_k^\omega\\
&\quad-2^{2p-2}\zeta_k(\omega)\left(2\widetilde{C}+\mk_p^E(\delta^\omega_{E,y_0}, \mathfrak{m}_k^\omega)^p+\mk_p^E(\delta^\omega_{E,y_0}, \mathfrak{n}^\omega)^p\right);\label{eqn: integral average bound eta}
\end{split}
\end{align}
here we are able to apply Fatou's lemma to obtain the final inequality due to the fact that by~\eqref{eqn: transform y0 bound}, we have
\begin{align*}
    -\frac{1}{M}\sum_{m=1}^M \zeta_{k, m}(\omega)S_{\lambda_k}\xi_{k, m}(u)
    &\leq \left( \sup_{M'\in \N}\zeta^{\avg}_{k, M'}(\omega)\right)\cdot \lambda_k \dist_{E,y_0}^p(\pi(u),u),
\end{align*}
where the expression on the right belongs to $L^1(\mathfrak{m}_k^\omega)$ for $\sigma$-a.e. $\omega$ by~\eqref{eqn: disint moment estimate} combined with  $\mathfrak{m}_k\in \mathcal{P}^\sigma_{p, q}(E)$. 
Also using~\eqref{eqn: disint moment estimate},
\begin{align*}
    \int_E\left(-\frac{1}{M}\sum_{m=1}^M\zeta_{k, m}(\omega)S_{\lambda_k}\xi_{k, m}\right)d\mathfrak{m}_k^\omega
    &\leq 2^{p-1}\lambda_k\left(\sup_{M'\in \N}\zeta^{\avg}_{k, M'}(\omega)\right)\left(\widetilde{C}+\mk_p^E(\delta^\omega_{E,y_0}, \mathfrak{m}_k^\omega)^p\right)
\end{align*}
and the expression on the right belongs to $L^1(\sigma)$, again due to the fact that $\mathfrak{m}_k\in \mathcal{P}^\sigma_{p, q}(E)$, thus we may integrate the last expression in~\eqref{eqn: integral average bound eta} against $\sigma$ and apply Fatou's lemma and H\"older's inequality to obtain
\begin{align}
\begin{split}
\label{eqn: eta full integral lower bound}
    &\int_\Omega\left[\limsup_{M\to \infty}\int_E\left(-\frac{1}{M}\sum_{m=1}^M
(\zeta_{k, m}\circ\pi)S_{\lambda_k}\xi_{k, m}
\right)d\mathfrak{m}_k^\bullet d\sigma\right]\\
&\quad-2^{2p-2}\int_\Omega\left[\zeta_k \cdot \left(2\widetilde{C}+\mk_p^E(\delta^\bullet_{E,y_0}, \mathfrak{m}_k^\bullet)^p+\mk_p^E(\delta^\bullet_{E,y_0}, \mathfrak{n}^\bullet)^p\right)\right]d\sigma\\
&\geq \limsup_{M\to \infty}\left(-\frac{1}{M}\sum_{m=1}^M\int_\Omega\zeta_{k, m}\int_E S_{\lambda_k}\xi_{k, m}d\mathfrak{m}_k^\bullet d\sigma\right)\\
&\quad-2^{2p-2}\left\|\zeta_k\right\|_{L^{r'}(\sigma)}\left(2\widetilde{C}+\scrmk_{p, q}(\delta^\bullet_{E,y_0}\otimes\sigma, \mathfrak{m}_k^\omega)^p+\scrmk_{p,q}(\delta^\bullet_{E,y_0}\otimes\sigma, \mathfrak{n}^\omega)^p\right)\\
&>-\infty,
\end{split}
\end{align}
where the finiteness follows as in~\eqref{eqn: single term lower bound} with the fact that $\mathfrak{n}$, $\mathfrak{m}_k\in \mathcal{P}^\sigma_{p, q}(E)$.
Hence 
\[
\int_E\eta_kd\mathfrak{n}^\bullet
\]
has a finite lower bound for $\sigma$-a.e. for each $1\leq k\leq K$, yielding the $\mathcal{B}_\sigma$-measurability of~\eqref{eqn: potential measurable}.
%%%

Next we show the measurability of \eqref{eqn: transform measurable}. 
Since $Y$ is separable and the function $\eta_k\circ\Xi_{j, \omega}$ is continuous on~$Y$ for each $\omega\in U_j$, 
there exists a countable subset $D$ of $Y$ (independent of $\omega$) such that 
\[
\inf_{v\in \pi^{-1}(\{\omega\})}\eta_k(v)=\inf_{t\in Y}\eta_k(\Xi_{j, \omega}(t))
=
\inf_{t\in D}\eta_k(\Xi_{j, \omega}(t)),
\]
hence the function
\[
\omega \mapsto 
\mathds{1}_{\Omega'}(\omega)\inf_{\pi^{-1}(\{\omega\})}\eta_k
\] 
is $\mathcal{B}_\sigma$-measurable in $\omega$. Again since $S_{\lambda_k}\xi_k$ is Borel, it suffices by~\nameref{thm: disintegration} this time to show that
\[
-\int_E S_{\lambda_k}\xi_kd\mathfrak{m}_k^\omega
<\infty
\quad
\text{for $\sigma$-a.e. $\omega$.}
\]
This follows as by a calculation analogous to~\eqref{eqn: transform y0 bound} applied to $\xi_k$ in place of $\xi_{k, m}$, followed by~\eqref{eqn: disint moment estimate}, we have
\begin{align*}
 -\int_E S_{\lambda_k}\xi_kd\mathfrak{m}_k^\omega
 \leq  \lambda_k\int_E\dist_{E,y_0}^p(\pi(u),u)d\mathfrak{m}_k^\omega(u)
 \leq \lambda_k2^{p-1}(\widetilde{C}+\mk^E_p(\delta^\omega_{E,y_0}, \mathfrak{m}_k^\omega)^p),
\end{align*}
and the last expression is finite for $\sigma$-a.e. $\omega$ as $\mathfrak{m}_k\in \mathcal{P}^\sigma_{p, q}(E)$. 
Thus we have the $\mathcal{B}_\sigma$-measurability of~\eqref{eqn: transform measurable} for $1\leq k\leq K$ as claimed.
\end{proof}
%%%%%%%%%%%%%%%%%%%%%%%%%BARYCENTER UNIQUENESS PROOF
We are now ready to prove uniqueness of $\scrmk_{p, q}$-barycenters under appropriate conditions.
\begin{proof}[Proof of Theorem~\ref{thm: barycenters}~\eqref{thm: disint bary unique}]
By Theorem~\ref{thm: barycenters}~\eqref{thm: disint bary duality}, for $1\leq k \leq K$ and $m\in \mathbb{N}$,
we can take elements $(\zeta_{k, m}, \hat{\xi}_{k, m})_{k=1}^K \in (\mathcal{Z}_{r', \sigma}\times \mathcal{X}_p)^K$
which satisfy 
\begin{align*}
\sum_{k=1}^K (\zeta_{k, m}\circ \pi) \hat{\xi}_{k, m}&=0,\\
-\sum_{k=1}^K\int_\Omega \zeta_{k, m}
 \left(\int_E S_{\lambda_k} \hat{\xi}_{k, m} d \mathfrak{m}_k^\bullet\right) d\sigma
&\leq
 -\sum_{k=1}^K\int_\Omega  \zeta_{k, m+1}
 \left(\int_E S_{\lambda_k} \hat{\xi}_{k, m+1} d \mathfrak{m}_k^\bullet\right) d\sigma\\
&\xrightarrow{m\to\infty}\inf_{\mathfrak{n}\in\mathcal{P}^\sigma_{p, q}(E)}\sum_{k=1}^K \lambda_k \scrmk_{p,q}\left(\mathfrak{m}_k,\mathfrak{n}\right)^p,
\end{align*} 
where this limit is also the value of the supremum for the dual problem in Theorem~\ref{thm: barycenters}~\eqref{thm: disint bary duality}. 
Define
\begin{align*}
    \tilde{\xi}_{k, m}\coloneqq 
    \begin{dcases}
     S_{\lambda_k}(S_{\lambda_k}\hat{\xi}_{k, m}),&\text{if } 1\leq k\leq K-1,\\
     -\frac{1}{(\zeta_{K, m}\circ\pi)}\sum_{i=1}^{K-1}(\zeta_{i, m}\circ\pi)\tilde{\xi}_{i, m},&\text{if }k=K,
    \end{dcases}
\end{align*}
then 
\begin{align}\label{eqn: tilde xi sum to zero}
\sum_{k=1}^K
(\zeta_{k, m}\circ\pi)\tilde{\xi}_{k, m}\equiv 0.
\end{align}
For $1\leq k\leq K-1$, it is classical that
\begin{align}
S_{\lambda_k}\tilde{\xi}_{k, m}&=S_{\lambda_k}(S_{\lambda_k}(S_{\lambda_k}\hat{\xi}_{k, m}))=S_{\lambda_k}\hat{\xi}_{k, m},\notag\\ 
\hat{\xi}_{k, m}&\geq\tilde{\xi}_{k, m}\geq -S_{\lambda_k} \hat{\xi}_{k, m}.\label{eqn: double transform bound}
\end{align}
This yields
\begin{align*}
    \tilde{\xi}_{K, m}
    =-\frac{1}{(\zeta_{K, m}\circ \pi)}\sum_{k=1}^{K-1}(\zeta_{k, m}\circ \pi)\tilde{\xi}_{k, m}
    \geq -\frac{1}{(\zeta_{K, m}\circ \pi)}\sum_{k=1}^{K-1}(\zeta_{k, m}\circ \pi)\hat{\xi}_{k, m}=\hat{\xi}_{K, m},
\end{align*}
hence $-S_{\lambda_K}\tilde{\xi}_{K, m}\geq -S_{\lambda_K}\hat{\xi}_{K, m}$.  
For $1\leq k\leq K-1$, since 
\eqref{eqn: double transform bound} holds 
and $\hat{\xi}_{k, m}\in \mathcal{X}_p$, by~\cite{KitagawaTakatsu25a}*{Lemma 2.18} we see $\tilde{\xi}_{k, m}$ is bounded on bounded subsets of $\pi^{-1}(\{\omega\})$ when $\omega\in \Omega$ is fixed. 
Thus composing with $\Xi_{j, \omega}$ for some appropriate $j$, by Corollary~\ref{cor: bounded is lipschitz}, we have that $\tilde{\xi}_{k, m}\vert_{\pi^{-1}(\{\omega\})}$ is continuous for all $1\leq k\leq K-1$ and $\omega\in \Omega$, this also implies $\tilde{\xi}_{K, m}\vert_{\pi^{-1}(\{\omega\})}$ is also continuous. Finally, by definition of $\{V_j\}_{j\in \N}$ we see 
\begin{align*}
\sum_{j\in \N}\chi_j\mathds{1}_{V_j}>0\quad\text{on }\Omega,
\end{align*}
thus for $1\leq k\leq K$ and $v\in E$, we can define 
\begin{align*}
    \xi_{k, m}(v)&\coloneqq
     \tilde{\xi}_{k, m}(v)-\frac{1}{\displaystyle\sum_{j'\in \N}\chi_{j'}(\pi(v))\mathds{1}_{V_{j'}}(\pi(v))}{\displaystyle\sum_{j\in \N}\chi_j(\pi(v))\mathds{1}_{V_j}(\pi(v))\tilde{\xi}_{k, m}(\Xi_j(\pi(v), y_0))},\\
\eta_{k, m}(v)&\coloneqq\zeta_{k, m}(\pi(v))\xi_{k, m}(v),
\end{align*}
then 
\begin{align*}
\sum_{j\in \N}\chi_j(\omega)\mathds{1}_{V_j}(\omega)\xi_{k, m}(\Xi_j(\omega, y_0))=\sum_{j\in \N}\chi_j(\omega)\mathds{1}_{V_j}(\omega)\eta_{k, m}(\Xi_j(\omega, y_0))=0
\end{align*}
for all $k$, $m$, and $\omega\in \Omega$ 
and we can calculate
\begin{align*}
S_{\lambda_k}\xi_{k, m}(u)=S_{\lambda_k}\tilde{\xi}_{k, m}(u) +\sum_{j\in \N}\chi_j(\pi(u))\mathds{1}_{V_j}(\pi(u))\tilde{\xi}_{k, m}(\Xi_j(\pi(u), y_0)),\qquad
\sum_{k=1}^K \eta_{k, m} \equiv 0,
\end{align*}
for all $m$. 
Since (using~\eqref{eqn: tilde xi sum to zero} to obtain the last line below)
\begin{align*}
    &-\sum_{k=1}^K\int_\Omega\zeta_{k, m}
 \left(\int_E S_{\lambda_k} \xi_{k, m} d \mathfrak{m}^\bullet\right)d\sigma\\
& =-\sum_{k=1}^K\int_\Omega\zeta_{k, m}
 \int_E \left(S_{\lambda_k} \tilde{\xi}_{k, m}+\frac{1}{\sum_{j'\in \N}\chi_{j'}\mathds{1}_{V_{j'}}}{\sum_{j\in \N}\chi_j\mathds{1}_{V_j}\tilde{\xi}_{k, m}(\Xi_j(\cdot, y_0))} \right)d \mathfrak{m}^\bullet d\sigma\\
 &=-\sum_{k=1}^K\int_\Omega\zeta_{k, m}
 \int_E S_{\lambda_k} \tilde{\xi}_{k, m} d \mathfrak{m}^\bullet d\sigma,
\end{align*}
we see that
\begin{align}\label{eqn: max sequence}
    \limsup_{m\to\infty}\left(-\sum_{k=1}^K\int_\Omega  \zeta_{k, m}
 \left(\int_E S_{\lambda_k} \xi_{k, m} d \mathfrak{m}_k^\bullet\right) d\sigma\right)
\geq\inf_{\mathcal{P}^\sigma_{p, q}(E)}\sum_{k=1}^K \lambda_k \scrmk_{p,q}\left(\mathfrak{m}_k,\cdot\right)^p.
\end{align}
Thus we may pass to a subsequence to assume
\begin{align*}
     \inf_{m\in\N}\left(-\sum_{k=1}^K\int_\Omega\zeta_{k, m}(\omega)\int_E S_{\lambda_k}\xi_{k, m} d\mathfrak{m}^\omega_kd\sigma(\omega)\right)
     &\geq \inf_{\mathcal{P}^\sigma_{p, q}(E)}\sum_{k=1}^K \lambda_k \scrmk_{p,q}\left(\mathfrak{m}_k,\cdot\right)^p-1>-\infty.
\end{align*}
If $p<q$, then we have $1<r'<\infty$ hence $L^{r'}(\sigma)$ is reflexive. Since $(\zeta_{k, m})_{m\in\N}$ is bounded in $L^{r'}(\sigma)$ for each $1\leq k\leq K$, we can pass to a subsequence which can be assumed to converge weakly in $L^{r'}(\sigma)$ to some $\zeta_k$. If $p=q$, then by Remark~\ref{rem: p=q duality} we may assume that each $\zeta_{k, m}\equiv 1$.
Thus we may apply Lemmas~\ref{lem: subsequence of maximizing sequence} and~\ref{lem: measurability of limits} to $(\xi_{k, m})_{m\in \N}$ and $(\zeta_{k, m})_{m\in\N}$; let $\eta_k$, $\xi_k$, and $\Omega_k$ be obtained from applying these Lemmas; we also continue using the notation $\eta^{\avg}_{k, M}$ and $\zeta^{\avg}_{k, M}$.

Now suppose $\mathfrak{n}\in \mathcal{P}^\sigma_{p, q}(E)$ is a minimizer of $\sum_{k=1}^K \lambda_k \scrmk_{p,q}\left(\mathfrak{m}_k,\cdot\right)^p$, and for $1\leq k\leq K$, $j\in \N$ let~$\Omega_{k, j}$ be the set obtained from Lemma~\ref{lem: measurability of limits} with $\varepsilon=j^{-1}\sigma(\Omega_k)$ if $\sigma(\Omega_k)>0$, and the empty set otherwise. If we denote 
%%%%%%%%%%%%%%%%
\begin{align*}
    \xi^{\avg}_{k, M}\coloneqq \frac{\eta^{\avg}_{k, M}}{(\zeta^{\avg}_{k, M}\circ \pi)},
\end{align*}
then since $\xi^{\avg}_{k, M}(v)\to \xi_k(v)$ as $M\to \infty$ whenever $\pi(v)\in \Omega_k$, for all $\omega\in \Omega_k$ and $u\in \pi^{-1}(\{\omega\})$ we have
\begin{align}
\begin{split}
    \limsup_{M\to\infty}\left( -\zeta^{\avg}_{k, M}(\omega) S_{\lambda_k}\xi^{\avg}_{k, M}(u)\right)
    &=\limsup_{M\to\infty}\left[\zeta^{\avg}_{k, M}(\omega) 
     \inf_{v\in \pi^{-1}(\{\pi(u)\})}\left( \lambda_k \dist_E(u, v)^p+\xi^{\avg}_{k, M}(v)\right)\right]\\
    &\leq \inf_{v\in \pi^{-1}(\{\pi(u)\})}\limsup_{M\to\infty}[\zeta^{\avg}_{k, M}(\omega) (\lambda_k \dist_E(u, v)^p+\xi^{\avg}_{k, M}(v))]\\
    &=-\zeta_k(\omega) S_{\lambda_k}\xi_k(u),\label{eqn: limsup estimate}
\end{split}
\end{align} 
where we use that 
\[
\limsup_{\ell\to\infty}(a_\ell b_\ell)=\left(\lim_{\ell\to\infty}a_\ell\right)\left(\limsup_{\ell\to\infty}b_\ell\right)
\]
for any sequences $(a_\ell)_{\ell\in\mathbb{N}},(b_\ell)_{\ell\in\mathbb{N}}$ 
such that 
$(a_\ell)_{\ell\in\mathbb{N}}$ converges to a positive number.
Meanwhile for $\omega\in \Omega'\setminus \Omega_k$ and $u\in \pi^{-1}(\{\omega\})$ we have
\begin{align}
\begin{split}
    \limsup_{M\to\infty}\left( -\zeta^{\avg}_{k, M}(\omega) S_{\lambda_k}\xi^{\avg}_{k, M}(u)\right)
    &\leq \inf_{v\in \pi^{-1}(\{\pi(u)\})}\limsup_{M\to\infty}\left(\lambda_k \zeta^{\avg}_{k, M}(\omega)\dist_E(u, v)^p+\eta^{\avg}_{k, M}(v)\right)\\
&= \inf_{v\in \pi^{-1}(\{\pi(u)\})}\eta_k(v).\label{eqn: zeta_k=0 limsup estimate}
\end{split}
\end{align}
Since $\zeta_{k, M}^{\avg}$ converges $\sigma$-a.e., it is bounded $\sigma$-a.e, and by~\eqref{eqn: transform y0 bound},
\begin{align*}
    -\zeta_{k, M}^{\avg}(\omega)S_{\lambda_k}\xi^{\avg}_{k, M}(u)\leq \left(\sup_{M'\in \N}\zeta^{\avg}_{k, M'}(\omega)\right)\cdot \lambda_k\dist_{E,y_0}^p(\pi(u),u)
\end{align*}
for $\sigma$-a.e. $\omega$. Again since $\mathfrak{m}_k\in \mathcal{P}^\sigma_{p, q}(E)$, by~\eqref{eqn: disint moment estimate} we have
\begin{align}\label{eqn: distance Lr}
\int_E \dist_{E,y_0}^p(\pi(u),u) d\mathfrak{m}_k^\bullet(u)\in L^r(\sigma)\subset L^1(\sigma),    
\end{align} hence we may use Fatou's lemma to obtain 
\begin{align}\label{eqn: fatou 1}
\begin{split}
\limsup_{M\to\infty}\int_E
\left( -\zeta_{k, M}^{\avg}(\omega) S_{\lambda_k}\xi^{\avg}_{k, M} \right)d\mathfrak{m}_k^\omega
&\leq \int_E\limsup_{M\to\infty}\left(
-\zeta_{k, M}^{\avg}(\omega) S_{\lambda_k}\xi^{\avg}_{k, M} \right)d\mathfrak{m}_k^\omega
\end{split}
\end{align}
for $\sigma$-a.e. $\omega$.
Since $\sigma$ has finite total measure, $L^{r'}(\sigma)$-convergence of the $\zeta_{k, M}^{\avg}$ implies the restricted sequence $(\zeta_{k, M}^{\avg}\mathds{1}_{\Omega_k})_{M\in \N}$ converges in~$L^1(\sigma)$, necessarily to $\zeta_k\mathds{1}_{\Omega_k}=\zeta_k$. 

Next suppose $\left\|\zeta_k\right\|_{L^1(\sigma)}>0$, 
then we have $\left\|\zeta_{k, M}^{\avg}\mathds{1}_{\Omega_k}\right\|_{L^1(\sigma)}>0$ for all $M$ sufficiently large, and 
\begin{align*}
    \left\|\zeta_{k, M}^{\avg}\mathds{1}_{\Omega_k}\right\|_{L^1(\sigma)}^{-1}\int_{\Omega'}\zeta_{k, M}^{\avg}\mathds{1}_{\Omega_k}d\sigma\xrightarrow{M\to\infty}\left\|\zeta_k\right\|_{L^1(\sigma)}^{-1}\int_{\Omega'}\zeta_kd\sigma
\end{align*}
for any $\Omega'\in \mathcal{B}_\sigma$. Thus we can view 
\[
\left(\left\|\zeta_{k, M}^{\avg}\mathds{1}_{\Omega_k}\right\|_{L^1(\sigma)}^{-1}\zeta_{k, M}^{\avg}\mathds{1}_{\Omega_k}\sigma\right)_{M\in \N}
\] as a sequence 
in $\mathcal{P}(\Omega)$ that converges setwise to the probability measure $\left\|\zeta_k\right\|_{L^1(\sigma)}^{-1}\zeta_k\sigma$. 
Thus by~\eqref{eqn: distance Lr} and using~\eqref{eqn: disint moment estimate}, recalling the $L^1(\sigma)$- and $L^{r'}(\sigma)$-convergence of $(\zeta_{k, M}^{\avg}\mathds{1}_{\Omega_k})_{M\in\N}$ to $\zeta_k$ yields
\begin{align*}
   & \limsup_{M\to\infty}
     \int_{\Omega_k} \frac{\zeta_{k, M}^{\avg}}{\left\|\zeta_{k, M}^{\avg}\mathds{1}_{\Omega_k}
     \right\|_{L^1(\sigma)}}\int_E \dist_{E,y_0}^p(\pi(u),u)  d\mathfrak{m}_k^\bullet(u) d\sigma\\
    &=\frac{1}{\left\|\zeta_k\right\|_{L^1(\sigma)}}
      \limsup_{M\to\infty}\int_{\Omega_k} \zeta_{k, M}^{\avg}\int_E \dist_{E,y_0}^p(\pi(u),u) d\mathfrak{m}_k^\bullet(u) d\sigma\\
      &=\frac{1}{\left\|\zeta_k\right\|_{L^1(\sigma)}}
      \int_{\Omega_k} \zeta_k\int_E \dist_{E,y_0}^p(\pi(u),u) d\mathfrak{m}_k^\bullet(u) d\sigma\\
    &\leq\frac{2^{p-1}}{\left\|\zeta_k\right\|_{L^1(\sigma)}}
      \int_\Omega \zeta_k \left(\widetilde{C}+\mk_p^E(\delta^\bullet_{E,y_0}, \mathfrak{m}_k^\bullet)^p\right) d\sigma\\
   &\leq\frac{2^{p-1}}{\left\|\zeta_k\right\|_{L^1(\sigma)}}\left\| \zeta_k\right\|_{L^{r'}(\sigma)}\cdot \left(\widetilde{C}+\scrmk_{p,q}(\delta^\bullet_{E,y_0}\otimes\sigma, \mathfrak{m}_k)^p\right)
   <\infty.
\end{align*}
Since
\[
-\int_E S_{\lambda_k}\xi^{\avg}_{k, M} d\mathfrak{m}_k^\omega\leq \lambda_k \int_E \dist_{E,y_0}^p(\pi(u),u)d\mathfrak{m}_k^\omega(u)
\]
we may apply Fatou's lemma for sequences of probability measures,~\cite{FeinbergKasyanovZadoianchuk14}*{Theorem 4.1}, with the choices 
\begin{align*}
\mu_n&=\frac{\zeta_{k, n}^{\avg}\mathds{1}_{\Omega_k}\sigma}{\left\|\zeta_{k, n}^{\avg}\mathds{1}_{\Omega_k}\right\|_{L^1(\sigma)}},
\qquad
g_n=-\lambda_k \int_E \dist_{E,y_0}^p(\pi(u),u) d\mathfrak{m}_k^\bullet(u), \qquad
f_n=\int_E S_{\lambda_k}\xi^{\avg}_{k, n} d\mathfrak{m}_k^\bullet
\end{align*}
in the reference which yields
\begin{align}
\begin{split}
   &\int_{\Omega_k} \limsup_{M\to\infty}\int_E  \left (    -(\zeta_{k, M}^{\avg}\circ\pi) S_{\lambda_k}\xi^{\avg}_{k, M} \right)
   d\mathfrak{m}_k^\bullet d\sigma\\
   &=\left\|\zeta_k\right\|_{L^1(\sigma)}\int_{\Omega_k}\frac{\zeta_k}{\left\|\zeta_k\right\|_{L^1(\sigma)}}\limsup_{M\to\infty}\left(-\int_E S_{\lambda_k}\xi^{\avg}_{k, M} d\mathfrak{m}_k^\bullet \right)d\sigma \\
   &\geq \left\|\zeta_k\right\|_{L^1(\sigma)}\limsup_{M\to\infty}\left(-\int_{\Omega_k}\frac{\zeta_{k, M}^{\avg}}{\left\|\zeta_{k, M}^{\avg}\mathds{1}_{\Omega_k}\right\|_{L^1(\sigma)}}
                 \int_E S_{\lambda_k}\xi^{\avg}_{k, M} d\mathfrak{m}_k^\bullet d\sigma \right)\\
   &= \limsup_{M\to\infty}\left(-\int_{\Omega_k}\zeta_{k, M}^{\avg}\int_E S_{\lambda_k}\xi^{\avg}_{k, M} d\mathfrak{m}_k^\bullet d\sigma\right);\label{eqn: fatou 2}
\end{split}
\end{align}
above we have used that 
\[
\lim_{M\to\infty}\zeta_{k, M}^{\avg}>0
\quad\text{on } \Omega_k.
\]
If $\left\| \zeta_k\right\|_{L^1(\sigma)}=0$, we would have $\sigma(\Omega_k)=0$ and the same inequality~\eqref{eqn: fatou 2} holds. By a calculation analogous to \eqref{eqn: H convexity}, for any $M\in \N$ we have
\begin{align}\label{eqn: lower bound by average}
    -(\zeta_{k, M}^{\avg} \circ\pi)S_{\lambda_k}\xi^{\avg}_{k, M}
    &\geq -\frac{1}{M}\sum_{m=1}^M (\zeta_{k, m} \circ \pi)S_{\lambda_k}\xi_{k, m},
\end{align}
thus combining the above with~\eqref{eqn: fatou 1} and~\eqref{eqn: fatou 2}  we see 
\begin{align}
\begin{split}
\int_{\Omega_k}\int_E\limsup_{M\to\infty}\left(-(\zeta_{k, M}^{\avg} \circ\pi)S_{\lambda_k}\xi^{\avg}_{k, M}\right) d\mathfrak{m}_k^\bullet d\sigma
&\geq \limsup_{M\to\infty}\left(-\frac{1}{M}\sum_{m=1}^M\int_{\Omega_k}\zeta_{k, m}\int_E S_{\lambda_k}\xi_{k, m} d\mathfrak{m}_k^\bullet d\sigma\right). \label{eqn: limit lower bound by average}
\end{split}
\end{align}
Now since $(\zeta_{k, M}^{\avg})_{M\in \mathbb{N}}$ converges uniformly to $0$ on $\Omega\setminus(\Omega_k \cup\Omega_{k, j})$
, for all $M$ sufficiently large we have 
\[
-\zeta_{k, M}^{\avg}(\pi(u)) S_{\lambda_k}\xi^{\avg}_{k, M}(u) \leq \lambda_k\dist_{E,y_0}^p(\pi(u),u)
\quad
\text{for }u\in\pi^{-1}(\Omega\setminus (\Omega_k\cup \Omega_{k, j})).
\]
Since the expression on the right-hand side has finite integral with respect to $\mathfrak{m}_k$, by Fatou's lemma and~\eqref{eqn: lower bound by average} we have
\begin{align*}
    &\int_{\Omega\setminus (\Omega_k\cup \Omega_{k, j})} \int_E  \limsup_{M\to\infty}\left (-(\zeta_{k, M}^{\avg}\circ\pi)  S_{\lambda_k}\xi^{\avg}_{k, M} \right)
   d\mathfrak{m}_k^\bullet d\sigma\\
   &\geq \limsup_{M\to\infty}\int_{\Omega\setminus (\Omega_k\cup \Omega_{k, j})}\int_E \left(-\frac{1}{M}\sum_{m=1}^M(\zeta_{k, m}\circ\pi)S_{\lambda_k}\xi_{k, m} d\mathfrak{m}_k^\bullet d\sigma\right),
\end{align*}
thus combining with~\eqref{eqn: limit lower bound by average} we have
\begin{align}\label{eqn: limit lower bound by average 2}
\begin{split}
    &\int_{\Omega\setminus\Omega_{k, j}}\int_E\limsup_{M\to\infty}\left(-(\zeta_{k, M}^{\avg}\circ\pi) S_{\lambda_k}\xi^{\avg}_{k, M}\right) d\mathfrak{m}_k^\bullet d\sigma\\
&\geq \limsup_{M\to\infty}\int_{\Omega\setminus\Omega_{k, j}}\int_E \left(-\frac{1}{M}\sum_{m=1}^M(\zeta_{k, m}\circ\pi)S_{\lambda_k}\xi_{k, m} \right) d\mathfrak{m}_k^\bullet d\sigma. 
\end{split}
\end{align}
By the $L^{r'}(\sigma)$-convergence of $(\zeta_{k, M}^{\avg})_{M\in \mathbb{N}}$ to $0$ on $\Omega_{k, j}$ and~\eqref{eqn: distance Lr}, we find
\begin{align*}
    &\limsup_{M\to \infty}\int_{\Omega_{k, j}}\int_E\left(-\frac{1}{M}\sum_{m=1}^M(\zeta_{k, m}\circ \pi) S_{\lambda_k}\xi_{k, m}\right)d\mathfrak{m}_k^\bullet d\sigma \\
&\leq \limsup_{M\to \infty}\left\| \zeta^{\avg}_{k, M}\mathds{1}_{\Omega_{k, j}}\right\|_{L^{r'}(\sigma)}\left\|\int_E\lambda_k \sum_{j\in \N}\chi_j(\pi(u))\dist_E(\Xi_{j, \pi(u)}(y_0), u)^pd\mathfrak{m}_k^\bullet(u) \right\|_{L^r(\sigma)}
=0,
\end{align*}
which in turn yields 
\begin{align*}
\begin{split}
    &\limsup_{M\to \infty}\int_{\Omega\setminus\Omega_{k, j}}\int_E\left(-\frac{1}{M}\sum_{m=1}^M(\zeta_{k, m}\circ\pi) S_{\lambda_k}\xi_{k, m}\right)d\mathfrak{m}_k^\bullet d\sigma \\
 &\geq\limsup_{M\to \infty}\int_\Omega\int_E\left(-\frac{1}{M}\sum_{m=1}^M(\zeta_{k, m}\circ\pi) S_{\lambda_k}\xi_{k, m}\right)d\mathfrak{m}_k^\bullet d\sigma \\
 &\quad-\limsup_{M\to \infty}\int_{\Omega_{k, j}}\int_E\left(-\frac{1}{M}\sum_{m=1}^M(\zeta_{k, m}\circ\pi) S_{\lambda_k}\xi_{k, m}\right)d\mathfrak{m}_k^\bullet d\sigma \\
 &\geq\limsup_{M\to \infty}\int_\Omega\int_E\left(-\frac{1}{M}\sum_{m=1}^M(\zeta_{k, m}\circ\pi) S_{\lambda_k}\xi_{k, m}\right)d\mathfrak{m}_k^\bullet d\sigma \\
 &\geq\inf_{\mathcal{P}^\sigma_{p, q}(E)}\sum_{k=1}^K \lambda_k \scrmk_{p,q}\left(\mathfrak{m}_k,\cdot\right)^p=\sum_{k=1}^K \lambda_k \scrmk_{p,q}\left(\mathfrak{m}_k,\mathfrak{n}\right)^p,
\end{split}
\end{align*}
by~\eqref{eqn: max sequence}.
%%%%%%%%%%%%%%%%%
Combining this with~\eqref{eqn: limsup estimate},~\eqref{eqn: zeta_k=0 limsup estimate}, and~\eqref{eqn: limit lower bound by average 2} and since $\Omega_k$ is disjoint with $\Omega_{k, j}$, we obtain
\begin{align*}
    \sum_{k=1}^K \lambda_k \scrmk_{p,q}\left(\mathfrak{m}_k,\mathfrak{n}\right)^p
    &\leq -\sum_{k=1}^K\int_{\Omega_k}
 \zeta_k(\omega)\int_E S_{\lambda_k} \xi_k d \mathfrak{m}_k^\omega d\sigma(\omega)
 +\sum_{k=1}^K\int_{\Omega\setminus(\Omega_k\cup\Omega_{k, j})}
 \inf_{\pi^{-1}(\{\omega\})}\eta_kd\sigma(\omega).
\end{align*}
Although 
the elements do not necessarily belong to $(\mathcal{Z}_{r', \sigma}\times \mathcal{X}_p)^K$, we do have $\zeta_k\in L^{r'}(\sigma)$ with $\left\| \zeta_k\right\|_{L^{r'}(\sigma)}\leq 1$, and $\xi_k$ continuous on $\pi^{-1}(\{\omega\})$ for $\sigma$-a.e. $\omega$. By~\eqref{eqn: etas sum to zero} 
and the measurability of \eqref{eqn: potential measurable} and \eqref{eqn: transform measurable}, we find
\begin{align}\label{eqn: dual maximizer}
\begin{split}
&\sum_{k=1}^K \lambda_k \scrmk_{p,q}\left(\mathfrak{m}_k,\mathfrak{n}\right)^p\\
    &\leq -\sum_{k=1}^K\int_{\Omega_k}
 \zeta_k \int_E S_{\lambda_k} \xi_k d \mathfrak{m}_k^\bullet d\sigma 
 +\sum_{k=1}^K\int_{\Omega\setminus(\Omega_k\cup\Omega_{k, j})}
 \inf_{\pi^{-1}(\{\omega\})}\eta_kd\sigma(\sigma) -\sum_{k=1}^K\int_\Omega\int_E\eta_kd\mathfrak{n}^\bullet d\sigma\\
&=\sum_{k=1}^K\int_{\Omega_k}
 \left(-\zeta_k \int_E S_{\lambda_k} \xi_k d \mathfrak{m}_k^\bullet-\int_E  \eta_k d \mathfrak{n}^\bullet\right) d\sigma 
 -\sum_{k=1}^K\int_{\Omega_{k, j}}
 \int_E\eta_k d\mathfrak{n}^\bullet d\sigma
 \\
 &\quad+\sum_{k=1}^K\int_{\Omega\setminus(\Omega_k\cup\Omega_{k, j})}
 \int_E\left(-\eta_k+\inf_{\pi^{-1}(\{\omega\})}\eta_k\right)d\mathfrak{n}^\bullet d\sigma(\omega) \\
 &\leq -\sum_{k=1}^K\int_{\Omega_k}
 \zeta_k \left(\int_E S_{\lambda_k} \xi_k d \mathfrak{m}_k^\bullet+\int_E  \xi_kd \mathfrak{n}^\bullet \right)d\sigma 
 -\sum_{k=1}^K\int_{\Omega_{k, j}}
 \int_E\eta_kd\mathfrak{n}^\bullet d\sigma \\
 &\xrightarrow{j\to\infty} -\sum_{k=1}^K\int_{\Omega_k}
 \zeta_k \left(\int_E S_{\lambda_k} \xi_k d \mathfrak{m}_k^\bullet+\int_E \xi_k d \mathfrak{n}^\bullet \right)d\sigma,
\end{split}
\end{align}
where the final limit follows because $\sigma(\Omega_{k, j})\to 0$ as $j\to\infty$, and~\eqref{eqn: etas sum to zero} combined with the estimates~\eqref{eqn: integral average bound eta} and~\eqref{eqn: eta full integral lower bound} implies each $\eta_k\in L^1(\mathfrak{n})$. Since
\begin{align}\label{eqn: usual transform bound}
-\zeta_k(\omega)(S_{\lambda_k} \xi_k(u)+\xi_k(v))\leq \lambda_k\zeta_k(\omega)\dist_E(u, v)^p
\end{align}
for all  $\omega\in\Omega'$ and $u$, $v\in \pi^{-1}(\{\omega\})$,~\eqref{eqn: dual maximizer} implies
\begin{align*}
   &-\sum_{k=1}^K\int_{\Omega_k}
 \zeta_k \left(\int_E S_{\lambda_k} \xi_k d \mathfrak{m}_k^\bullet+\int_E  \xi_k d \mathfrak{n}^\bullet \right)d\sigma \\
 &\geq \sum_{k=1}^K \lambda_k \scrmk_{p,q}\left(\mathfrak{m}_k,\mathfrak{n}\right)^p\\
 &= \sum_{k=1}^K\lambda_k\left\|\mk_p^E(\mathfrak{m}_k^\bullet, \mathfrak{n}^\bullet)^p\mathds{1}_{\Omega\setminus\Omega_k}\right\|_{L^r(\sigma)}+\sum_{k=1}^K\lambda_k\left\|\mk_p^E(\mathfrak{m}_k^\bullet, \mathfrak{n}^\bullet)^p\mathds{1}_{\Omega_k}\right\|_{L^r(\sigma)}\\
 &\geq \sum_{k=1}^K\lambda_k\left\|\mk_p^E(\mathfrak{m}_k^\bullet, \mathfrak{n}^\bullet)^p\mathds{1}_{\Omega_k}\right\|_{L^r(\sigma)}
\\
&\geq -\sum_{k=1}^K\int_{\Omega_k}
 \zeta_k \left(\int_E S_{\lambda_k} \xi_k d \mathfrak{m}_k^\bullet+\int_E  \xi_k d \mathfrak{n}^\bullet \right)d\sigma,
\end{align*}
hence for any $1\leq k\leq K$, for $\sigma$-a.e. $\omega\in\Omega\setminus\Omega_k$, we have $\mk_p^E(\mathfrak{m}_k^\omega, \mathfrak{n}^\omega)=0$, in particular  $\mathfrak{m}_k^\omega=\mathfrak{n}^\omega$.

Now the above also implies 
\begin{align*}
    -\sum_{k=1}^K\int_{\Omega_k} \zeta_k
 \left(\int_E S_{\lambda_k} \xi_k  d \mathfrak{m}_k^\bullet+\int_E  \xi_k d \mathfrak{n}^\bullet\right) d\sigma=\sum_{k=1}^K\lambda_k\left\|\mk_p^E(\mathfrak{m}_k^\bullet, \mathfrak{n}^\bullet)^p\mathds{1}_{\Omega_k}\right\|_{L^r(\sigma)},
\end{align*}
then by~\eqref{eqn: usual transform bound}, each term in the sum on the left of the inequality above is less than or equal to each term in the sum on the right, in particular we have termwise equality for each $1\leq k\leq K$. 

Let $k$ be the distinguished index in our hypothesis. 
Then again using the dual characterization of the $L^r(\sigma)$ norm (\cite{Folland99}*{Proposition 6.13}),
\begin{align*}
\begin{split}
-\int_{\Omega_k} \zeta_k
 \left(\int_E S_{\lambda_k} \xi_k  d \mathfrak{m}_k^\bullet+\int_E  \xi_k d \mathfrak{n}^\bullet\right) d\sigma
& =\lambda_k\left\|\mk_p^E(\mathfrak{m}_k^\bullet, \mathfrak{n}^\bullet)^p\mathds{1}_{\Omega_k}\right\|_{L^r(\sigma)}\\
 &\geq \lambda_k\int_{\Omega_k} \zeta_k\mk_p^E(\mathfrak{m}_k^\bullet, \mathfrak{n}^\bullet)^pd\sigma\\
& \geq -\int_{\Omega_k}\zeta_k
 \left(\int_E S_{\lambda_k} \xi_k d \mathfrak{m}_k^\bullet+\int_E  \xi_k d \mathfrak{n}^\bullet\right) d\sigma.
 \end{split}
\end{align*}
In particular, for $\sigma$-a.e. $\omega\in \Omega_k$ we must have 
\begin{align*}
    -\int_E S_{\lambda_k} \xi_k d \mathfrak{m}_k^\omega-\int_E  \xi_k d \mathfrak{n}^\omega=\lambda_k\mk_p^E(\mathfrak{m}_k^\omega, \mathfrak{n}^\omega)^p.
    \end{align*}
Fix $\omega\in\Omega_k$ where this equality holds, with $\omega\in U_j$ for some $j\in \N$ where the measure $(\Xi_{j, \omega})_\sharp\mathfrak{m}_k^\omega$ is absolutely continuous with respect to $\Vol_Y$. Suppose $j_0$ is the unique index such that $\omega\in V_{j_0}$, then if we define $\phi_\omega$, $\psi_\omega: Y\to \R$ and $\mu_\omega$, $\nu_\omega\in\mathcal{P}_p(Y)$ by
\begin{align*}
 \psi_\omega(s)\coloneqq((S_{\lambda_k} \xi_k)\circ\Xi_{j_0, \omega})^{\lambda_k \dist_Y^p}(s),
 \quad \phi_\omega(t)\coloneqq \psi_\omega^{\lambda_k \dist_Y^p}(t),\quad
 \mu_\omega\coloneqq(\Xi^{-1}_{j_0, \omega})_\sharp \mathfrak{m}_k^\omega,
\quad\nu_\omega\coloneqq (\Xi^{-1}_{j_0, \omega})_\sharp \mathfrak{n}^\omega,
\end{align*}
the above implies
\begin{align*}
    -\int_Y \phi_\omega d\mu_\omega-\int_Y \psi_\omega d \nu_\omega=\lambda_k\mk_p^Y(\mu_\omega, \nu_\omega)^p.
\end{align*}
Since $\mu_\omega=g^{j_0}_j(\omega)_\sharp(\Xi^{-1}_{j, \omega})_\sharp \mathfrak{m}_k^\omega$ and $g^{j_0}_j(\omega)$ is an isometry of $Y$, we also see $\mu_\omega$ is absolutely continuous with respect to $\Vol_Y$. 
Let $\gamma^\omega\in \Pi(\mu_\omega, \nu_\omega)$ be a $p$-optimal coupling between $\mu_\omega$ and $\nu_\omega$. Then we obtain 
\begin{align}\label{eqn: dual equality a.e.}
    - \phi_\omega(t)-\psi_\omega(s)=\lambda_k \dist_Y(t, s)^p,\quad \gamma^\omega\text{-a.e. }(t, s).
    \end{align}
   Since  
\[
-\lambda_k \dist_Y(y_0, t)^p-\psi_\omega(y_0)\leq \phi_\omega(t)\leq S_{\lambda_k} \xi_k(\Xi_{j_0, \omega}(t)),
\]
we see $\phi_\omega$ is bounded on any compact subset of $Y$, and since it is a $\lambda_k\dist_Y^p$-transform of some function, by Corollary~\ref{cor: bounded is lipschitz} $\phi_\omega$ is uniformly Lipschitz on any compact subset of $Y\setminus \partial Y$. Thus by Rademacher's theorem $\phi_\omega$ is differentiable $\Vol_Y$-a.e. on $Y$. 
Let $t\in Y\setminus \partial Y$ be a point of differentiability for $\phi_\omega$ such that there exists $s_t\in Y$ satisfying \eqref{eqn: dual equality a.e.}; 
as $\mathfrak{m}^\omega_k$ is absolutely continuous with respect to $\Vol_Y$, the set of such $t$ has full~$\mathfrak{m}^\omega_k$ measure. 
%%%%
Let us denote by $\langle \cdot, \cdot \rangle_Y$ the Riemannian metric on~$Y$,
and write $\left| \cdot \right|_Y=\langle \cdot, \cdot\rangle_Y^{1/2}$.
If a function $f$ on $Y$ is differentiable at $t\in Y\setminus\partial Y$, then
\[
f(\exp^Y_t( \varepsilon V ))
=f(t)+\varepsilon \langle V, \nabla_Y f(t)\rangle_Y+o(\varepsilon)\quad\text{ as }\varepsilon \to 0
\]
for any unit tangent vector $V$ to $Y$ at $t$, where $\exp^Y$ is the exponential map of $Y$ and $\nabla_Y f$ is the gradient of $f$. 
This with the choice $f=\phi_\omega$ implies 
\begin{align*}
\dist_Y(\exp_t^Y(\varepsilon V), s_t)^p
\geq
-\phi_\omega(\exp_t^Y(\varepsilon V))
-\psi_\omega(s_t)
&= 
-\varepsilon\langle V, \nabla_Y \phi_\omega(t)   \rangle_Y
-\phi_\omega(t)
-\psi_\omega(s_t)+o(\varepsilon)\\
&=-\varepsilon\langle V, \nabla_Y \phi_\omega(t)   \rangle_Y
+\dist_Y(t, s_t)^p+o(\varepsilon)
\quad
\text{as } \varepsilon \to 0.
\end{align*}
Thus the above shows $t'\mapsto \dist_Y(t', s_t)^p$ is subdifferentiable at $t'=t$, while since  
$\dist_Y^p=(\dist_Y^2)^{p/2}$
we see that~\cite{McCann01}*{Proposition 6} implies superdifferentiability when $s_t\neq t$, hence $t'\mapsto \dist_Y(t', s_t)^p$ is differentiable at $t'=t$ if $s_t\neq t$. 
Since $p>1$, when $s_t\neq t$ 
by taking the derivative of~\eqref{eqn: dual equality a.e.} with respect to $t$, after some tedious but routine calculation we obtain that $\nabla_Y\phi_\omega(t)\neq 0$ and
\[
s_t=
\exp_t^Y\left( 
\left| 
\dfrac{\nabla_Y \phi_\omega(t)}{p\lambda_k}   \right|_Y^{\frac{1}{p-1}} 
\dfrac{\nabla_Y\phi_\omega(t)}{\left|\nabla_Y\phi_\omega(t)\right|_Y}
\right),
\]
and if either $\nabla_Y\phi_\omega(t)= 0$ or $\phi_\omega$ is not superdifferentiable at $t$, we have $s_t=t$. 
This shows that there is a $\mu_\omega$-a.e. single valued map $T^\omega$ on $Y$ such that the pair $(t, T^\omega(t))$ satisfy the equality in \eqref{eqn: dual equality a.e.}. Combining with~\cite{GangboMcCann00}*{Lemma 2.4} necessarily we have that $\gamma^\omega=(\Id\times T^\omega)_\sharp\mu_\omega$. The map $T^\omega$ is entirely determined by $\xi_k$, hence so is the right marginal $\nu_\omega$ for $\sigma$-a.e. $\omega\in \Omega_k$. All together this implies $\mathfrak{n}^\omega$ is determined for $\sigma$-a.e. $\omega$ by $\zeta_k$ or $\xi_k$, thus we see the $\scrmk_{p, q}$-barycenter is unique.
\end{proof}
\begin{proof}[Proof of Theorem~\ref{thm: classical barycenters}]
    We can apply Theorem~\ref{thm: barycenters}~\eqref{thm: disint bary exist},~\eqref{thm: disint bary duality}, and~\eqref{thm: disint bary unique} with any value of $q$ and~$\Omega$ a one-point space, and $\sigma$ the associated delta measure and the claims follow immediately. Regarding the duality result, also recall Remark~\ref{rem: p=q duality}.
\end{proof}

\begin{ack} 
JK was supported in part by National Science Foundation grant DMS-2246606.
AT was supported in part by JSPS KAKENHI Grant Number 24K21513.
\end{ack}

\medskip

%%%%%%%%%%%%%%%%%
\bibliography{Wpq.bib}
\bibliographystyle{alpha}
\end{document}